

\documentclass[a4paper,12pt,reqno,oneside]{amsart}

\usepackage{setspace} \setstretch{1.1}
\usepackage{typearea} 

\usepackage{amscd,amsmath,amssymb,verbatim}

\usepackage{ifthen}
\usepackage{xr-hyper}
\usepackage{graphicx}
\usepackage{paralist,cite}
\usepackage[colorlinks,backref,final,bookmarksnumbered,bookmarks]{hyperref}
\usepackage[backrefs]{amsrefs}
\usepackage{tikz-cd}

\externaldocument[metr:]{metrizable}
\externaldocument[comb:]{combinatorics}
\externaldocument[book:]{textbook}

\newcommand{\arxiv}[2][]{\ifthenelse{\equal{#1}{}}
{\href{http://arxiv.org/abs/#2}{\tt arXiv:#2}}
{\href{http://arxiv.org/abs/math/#2}{\tt arXiv:math.#1/#2}}}

\makeatletter
\renewcommand\subsection{\@startsection
{subsection}{2}{0cm} 
{-\baselineskip}     
{0.5\baselineskip}   
{\sffamily}} 
\makeatother

\theoremstyle{plain}
\newtheorem{theorem}{Theorem}[section]
\newtheorem{lemma}[theorem]{Lemma}
\newtheorem{corollary}[theorem]{Corollary}
\newtheorem{proposition}[theorem]{Proposition}

\newtheorem{maintheorem}{Theorem}

\theoremstyle{definition}
\newtheorem{example}[theorem]{Example}

\newtheoremstyle{remark}
{}{}{}{}{\itshape}{}{ }{\thmname{#1}\thmnumber{ \itshape #2.}}
\theoremstyle{remark}
\newtheorem{remark}[theorem]{Remark}

\newtheoremstyle{concise}
{}{}{}{}{\bfseries}{}{ }{\thmnumber{#2.}\thmnote{ #3.}}
\theoremstyle{concise}
\newtheorem{definition}[theorem]{}

\newenvironment{roster}[1][0]
{

\begin{enumerate}\setcounter{enumi}{#1}}
{\end{enumerate}}

\def\incl{\subset} 
\def\N{\mathbb{N}} \def\R{\mathbb{R}} \def\Z{\mathbb{Z}} 
\def\x{\times}\def\but{\setminus} \def\eps{\varepsilon} \def\phi{\varphi}
\def\emb{\hookrightarrow} 
\def\invlim{\varprojlim} 
\def\xr#1{\xrightarrow{#1}} \def\xl#1{\xleftarrow{#1}} \renewcommand{\:}{\colon}
\def\imp{$\Rightarrow$}  
 
\DeclareMathOperator{\id}{id}

\DeclareMathOperator{\st}{st}

\def\ost{\mathop{\mathring{\text{st}}}}

\def\Cel{\lfloor} \def\Cer{\rfloor}
\def\Fll{\lceil} \def\Flr{\rceil}
\def\downscale#1{\mathchoice{\raisebox{1pt}{$\scriptstyle#1$}}
{\raisebox{1pt}{$\scriptstyle#1$}}{\raisebox{.5pt}{$\scriptscriptstyle#1$}}
{{\scriptscriptstyle#1}}}
\def\cel{\downscale\Cel} \def\cer{\downscale\Cer}
\def\fll{\downscale\Fll} \def\flr{\downscale\Flr}
 
\def\bigfl{\Fll} \def\bigfr{\Flr}
\def\bydef{\mathrel{\mathop:}=}
\def\cubvert{{\raisebox{-0.35ex}{$\hskip-1pt\scriptstyle\urcorner$}}}

\begin{document}
\title{Infinite-dimensional uniform polyhedra}
\author{Sergey A. Melikhov}
\address{Steklov Mathematical Institute of Russian Academy of Sciences,
ul.\ Gubkina 8, Moscow, 119991 Russia}
\email{melikhov@mi-ras.ru}

\begin{abstract}
Uniform covers with a finite-dimensional nerve are rare (i.e., do not form
a cofinal family) in many separable metric spaces of interest.
To get hold on uniform homotopy properties of these spaces, a reasonably
behaved notion of an infinite-dimensional metric polyhedron is needed;
a specific list of desired properties was sketched by J. R. Isbell in
a series of publications in 1959--64.
In this paper we construct what appears to be the desired theory of uniform
polyhedra; incidentally, considerable information about their metric and
Lipschitz properties is obtained.
\end{abstract}

\maketitle

\section{Introduction}

By a (non-uniform) {\it polyhedron} we mean a triangulable space, that is, a space homeomorphic to 
the traditional geometric realization $||K||$ of an (abstract) simplicial complex $K$ in 
the {\it metric} topology.%
\footnote{In more detail, $||K||$ is the union of the convex hulls $\left<\sigma\right>$ of all simplexes 
$\sigma\in K$ in the vector space $\R[V]$ of all formal $\R$-linear combinations of vertices of $K$.
The metric topology is the topology of the subspace of $\R[V]$ in either the $l_1$ or the $l_2$ or 
the $l_\infty$ metric or the topology of the subspace of the $V$-indexed product of copies of $\R$
(see \cite{M3}*{\ref{book:4topologies}} for a proof that all these topologies coincide on $||K||$).
Usually polyhedra are meant to be endowed with a fixed PL structure, that is, a family of compatible 
triangulations, but we do not need this for our purposes here.}
Although polyhedra are usually harder to deal with than CW complexes, a number of very basic facts about them are
known since the middle of the 20th century: polyhedra are ANRs (see \cite{M3}*{\ref{book:poly-ANR}}, 
\cite{Hu}, \cite{Sa2}) and every ANR is homotopy equivalent to a polyhedron (see \cite{M3}*{\ref{book:anr-he}}, \cite{Sa2});
in fact, a metrizable space is an ANR if and only if it is $\eps$-homotopy dominated by a polyhedron 
for each $\eps>0$ (see \cite{M3}*{\ref{book:hanner}}, \cite{Hu}, \cite{Sa2}).
Another classic result is that the barycentric subdivision of a simplicial complex $K$ is {\it admissible} in 
the sense that it determines the same topology on $||K||$ (see \cite{M3}*{\ref{book:barycentric-admissible}}, 
\cite{Le}, \cite{He}).%
\footnote{However, for $K$ that is not locally finite-dimensional, the metric topology on $||K||$ is strictly 
finer than the topology (in fact, also metrizable) with base consisting of the open stars of the simplexes of 
the iterated barycentric subdivisions $K,K',K'',\dots$ \cite{Le}.}

More recently, a number of basic results of PL topology have been generalized to arbitrary polyhedra (including 
locally infinite dimensional ones) by K. Sakai and his collaborators.
A simplicial subdivision of $K$ is admissible if and only if its set of vertices has no cluster points in $||K||$ 
\cite{MiSa} (see also \cite{M3}*{\ref{book:subdivision}}, \cite{Sa2}).
(Of course, it may possibly have cluster points only when $K$ is not locally finite.)
Every open cover of $||K||$ is refined by the cover consisting of the open stars of vertices of some 
admissible subdivision of $K$ \cite{Sa1} (see also \cite{M3}*{\ref{book:henderson}}, \cite{Sa2}).
Every continuous map $||K||\to||L||$ can be arbitrarily closely approximated by a simplicial map 
between some admissible subdivisions \cite{Sa2} (see also \cite{M3}*{\ref{book:simp-appr}}).
Metric topology respects the product of simplicial complexes with linearly ordered vertices \cite{Sa2}
(this last assertion also has a more elegant proof, as it follows from Theorem \ref{3.1}(a) and 
Proposition \ref{metric topology} of the present paper; see also \cite{M3}*{Theorem \ref{book:product-thm}}).

Still more recently, the theory of polyhedra was further developed by the author, who obtained, in
particular, some local characterizations of polyhedra \cite{M3}*{Chapter \ref{book:ch-polyhedra}}.
\bigskip

The purpose of the present paper is to develop a theory of polyhedra in the uniform setting, thereby 
fulfilling what seems to be the basic aims of a research program initiated by J.~ R.~ Isbell in the 1950s.
Specific terms of his program will be reviewed at the end of this introduction.
For now let us only note that one certainly wants {\it uniform polyhedra} to be metrizable uniform spaces
(i.e.\ equivalence classes of metric spaces up to uniform homeomorphism) whose underlying topological spaces 
are polyhedra.
And one certainly wants them to have reasonable properties with respect to the uniform category, and
not the topological category (so, uniformly continuous maps instead of continuous maps, uniform covers 
instead of open covers, uniformly discrete sets instead of discrete sets, uniform ANRs instead of ANRs,
and so on).

In the present paper we will deal only with countable polyhedra, or equivalently, separable metric spaces.
We refer to \cite{M2} for all details about the uniform category and also to \cite{M1}*{Chapter 2} for all 
combinatorial prerequisites.

\subsection{Basic ideas}
Since topological and uniform notions agree on compact spaces (recall that continuous maps with compact 
domain are uniformly continuous), the theory of uniform polyhedra is not supposed to say anything new 
about compact polyhedra.

Finite-dimensional uniform polyhedra were studied by Isbell, who used the $l_\infty$ metric on 
the traditional geometric realization $||K||$ and showed that the resulting uniform structure 
satisfies all basic reasonable properties that one could expect \cite{I0}, \cite{I1}, \cite{I3}.
In particular, Isbell's finite-dimensional uniform polyhedra are uniform ANRs \cite{I1}*{1.9}, \cite{I3}*{V.15}
(see \cite{M2} concerning uniform ANRs) and their uniform structure is combinatorially controlled.%
\footnote{Namely, if $C_n$ denotes the cover of $||K||$ by the open stars of vertices in the $n$th
barycentric subdivision and $D_n$ denotes the cover of $||K||$ by all open $2^{-n}$-balls in the $l_\infty$ 
metric, then every $C_n$ is refined by some $D_m$, and every $D_n$ is refined by some $C_m$,
as long as $K$ is finite dimensional \cite{I1}*{2.1}, \cite{I3}*{IV.6}.}
It is also easy to see that for an $n$-dimensional $K$, the $l_\infty$ metric of $\R[V]$ is uniformly 
equivalent, when restricted to $||K||$, to the $l_1$ and $l_2$ metrics, and when the vertex set $V$ is countable 
also to any metric inducing the uniform structure of the product.%
\footnote{Any two points of $||K||$ lie in a $(2n+1)$-simplex that is the convex hull of a finite set 
$F\subset V\subset\R[V]$.
But any two of the aforementioned metrics are well-known to be Lipschitz equivalent when restricted to
the finite dimensional subspace $\R[F]$, with both Lipschitz constants depending only on $n$ 
(see \cite{M2}*{\S\ref{metr:product}}).}

Unfortunately, Isbell's uniformity does not even remotely satisfy most of the basic reasonable properties in 
the infinite-dimensional case, as Isbell himself noted on a number of occasions (see the end of this introduction).

A key difficulty in the infinite-dimensional case can be seen from the following example.
Let $\Delta^n$ be the standard $n$-simplex in $\R^{n+1}$, that is the intersection of the positive octant 
$x_i\ge 0$, $i=1,\dots,n+1$, with the hyperplane $\sum x_i=1$.
Then $\Delta^n$ has a constant (i.e.\ independent of $n$) edge length in Euclidean, or $l_1$, or $l_\infty$ metric.
However, the distance from the barycenter of $\Delta^n$ (at $(\frac1{n+1},\dots,\frac1{n+1}$)) to the barycenter 
of a facet of $\Delta^n$ (at $(0,\frac1n,\dots,\frac1n)$) tends to zero as $n\to\infty$, in either metric.
Even if one replaces $\Delta^n$ with the standard spherical $n$-simplex, that is the intersection of 
the positive octant with the unit sphere $\sum x_i^2=1$, the great circle distance (in the usual sperical metric,
induced from the Euclidean metric in $\R^{n+1}$) from the barycenter of the spherical simplex 
(at $(\frac1{\sqrt{n+1}},\dots,\frac1{\sqrt{n+1}})$) to the barycenter of its facet 
(at $(0,\frac1{\sqrt n},\dots,\frac1{\sqrt n})$) still tends to zero as $n\to\infty$, although the edge length 
remains constant.

In order to endow a simplicial complex $K$ with a combinatorially controlled uniform structure, one needs 
a metric on $K$ and a sequence of successive subdivisions $K^{(i)}$ of $K$ such that the open stars of vertices 
of each $K^{(i)}$ have diameters $\le d_i$, where the $d_i$ tend to $0$ as $i\to\infty$, and form 
a uniform cover (i.e.\ a cover that is refined, for some $\lambda>0$, by the cover by all $\lambda$-balls).
However, if the complexes $K^{(i)}$ are to be simplicial, one can hardly construct such a metric on $K$; 
certainly, the simplices of any $K^{(i)}$ cannot be Euclidean, or be endowed with the $l_1$ or $l_\infty$ metric 
via an affine homemorphism with the standard simplex, as one can see from the above example.%
\footnote{Indeed, assuming the contrary, let $\sigma$ be an $n$-simplex of such a $K^{(i)}$.
Then the set $B_\sigma$ of the barycenters of the facets of $\sigma$ does not lie in the open star of 
any vertex of $K^{(i)}$. 
On the other hand, since the edges of $K^{(i)}$ are of lengths $\le d_i$, the given affine 
homeomorphism $\Delta^n\to\sigma$ is $d_i$-Lipschitz, and therefore
the diameters of $B_\sigma$ for $n$-dimensional simplices $\sigma$ tend to zero as $n\to\infty$.
So the cover of $K$ by the open stars of vertices of $K^{(i)}$ fails to be uniform, as long as $K$ 
is infinite-dimensional.}
Clearly, complexes of spherical simplexes endowed with the standard spherical metric fare no better.

Our point of departure is to use the ``canonical subdivision'' of 
\cite{M1}, which when applied to a simplicial complex produces 
a cubical complex (versions of this construction are well-known in 
geometric group theory and in topological combinatorics).
Each cube is then endowed with the $l_\infty$ metric, and by glueing
these cubes together we get the uniform geometric realization of 
a simplicial complex.
Canonical subdivision also applies to a cubical complex, and in this case
it is the obvious silly procedure producing another cubical complex by 
cutting every $n$-cube into $2^n$ of $n$-cubes.
Thus for the $i$th canonical subdivision $K^{\#i}$ of the simplicial 
complex $K$, the open stars of vertices of $K^{\#i}$ have diameters 
$\le 2^{2-i}$, and form a basic uniform cover of $K$.
It is not hard to check that this uniform structure induces 
the usual metric topology (see Proposition \ref{metric topology}).

While this simple idea already suffices for many practical purposes, it also
brings some deep combinatorial complications.
At least at a first glance, the ``cubical'' uniform structure appears to be in
an endless conflict with basic PL constructions such as cone, join and
mapping cylinder, which are manifestly ``non-cubical''.
It is this conflict that in a sense is the main theme  of the present paper.

\subsection{New combinatorics of combinatorial topology}
Our first step towards resolution of the conflict is to further subdivide
the cubes into simplices, without introducing new vertices.
(Of course, this subdivision is ``handicapped'' in that its open stars of 
vertices do not form a uniform cover.)
These simplices are now asymmetric, but each comes with a natural total order
on its vertices, for it is isometric to the ``standard skew $n$-simplex''
$\{(x_1,\dots,x_n)\mid 0\le x_1\le\dots\le x_n\le 1\}\subset\R^n$ for some $n$
with the $l_\infty$ metric.%
\footnote{This natural subdivision of an $n$-cube into $n!$ of $n$-simplices 
was described by H. Freudenthal \cite{Fr2}*{\S1}.
The case $n=3$ was also known to Liu Hui in the 3rd century AD, and the skew 
$n$-simplices (without the $l_\infty$ metric) were also known to L. Schl\"afli
in the 19th century, who called them ``orthoschemes''.}
One can now try to use these asymmetric metric simplices as separate 
building blocks, making sure their vertex orderings agree whenever they 
overlap.
Thus let us call a partial ordering of the vertices of a simplicial complex
{\it compatible} if it induces a total ordering on each simplex, or equivalently
if vertices connected by an edge are comparable.

If $K$ is a simplicial complex with a compatible partial ordering of vertices,
or more generally any $\Delta$-set (=semi-simplicial set in a modern terminology),
then we can canonically endow every simplex of $K$ with the standard skew
metric; as long as there are no loops in the $1$-skeleton (in the case of
a $\Delta$-set), this can be shown to extend to a path metric on the entire $K$.
Unfortunately, this metric is generally plagued by the very same problem that
we intended to avoid: it turns out that for each $\eps>0$ there exists an $n$
such that every point of the standard skew $n$-simplex is $\eps$-close to
some point in its boundary (see Example \ref{counterexample}).

For this reason we restrict our attention to flag complexes (a simplicial complex
$K$ is called ``flag'' if every subcomplex of $K$ isomorphic to the boundary
of a $d$-simplex, $d>1$, lies in an actual $d$-simplex in $K$).
A flag simplicial complex with a compatible partial ordering of vertices will
be called a {\it preposet}.
The $1$-skeleton of a preposet carries the structure of an acyclic digraph (=a directed graph 
with no directed cycles).
Let us note that the binary relation $E\subset V\x V$ of an acyclic digraph $G=(V,E)$ is a generalization
of strict partial order, capturing the notion of possibly non-transitive subordination (``the vassal of 
my vassal is not necessarily my vassal'').
Conversely, the flag complex spanned by an acyclic digraph is a preposet, with vertices partially ordered 
by the relation: $v\le w$ if and only if there exists a directed path from $v$ to $w$ in the $1$-skeleton.

A special case of a preposet is (the order complex of) a poset.
Thus we arrive at the uniform geometric realization of a poset $P$.
If $P$ happens to be the poset of all nonempty faces of a simplicial complex $K$, this simply brings us back 
to the uniform geometric realization of $K$, as discussed above.

\subsection{Main results}

All posets and other combinatorial objects will be assumed countable throughout.

We consider three notions of uniform geometric realization of a poset (in particular,
of the poset of nonempty faces of a simplicial or cubical complex) by a separable
metrizable uniform space:
\begin{itemize}
\item by constructing an explicit embedding into the unit cube of the functional
space $c_0$ (generalizing a construction of Shtan'ko--Shtogrin \cite{SS});

\item by gluing together the standard skew simplices via quotient uniformity
(akin to the traditional geometric realization of a simplicial set
via quotient topology);

\item by gluing together the standard skew simplices via path metric (like
in geometric polyhedral complexes used in geometric group theory).
\end{itemize}
The following follows from Theorems \ref{isometry} and \ref{CW}:

\begin{maintheorem} All three notions of geometric realization are equivalent.
\end{maintheorem}

We also show that for finite dimensional simplicial complexes our geometric realization 
is equivalent to Isbell's (Corollary \ref{Isbell's metric2}).
The geometric realization of a locally infinite dimensional poset (even
a simplicial complex) may fail to be complete; however it is ``homotopy complete''
in the sense that the completion can be instantaneously taken off the remainder
by a homotopy (Lemma \ref{3.9}).

Next, geometric realization is promoted to a functor from monotone maps between posets
to uniformly continuous maps between separable metrizable uniform spaces, which
is shown to preserve pullbacks and those pushouts that remain pushouts upon
barycentric subdivision (Theorem \ref{pullback-pushout}).
In particular, the functor respects joins, and mapping cylinders of simplicial maps.
Here the join of posets can refer to any of the two well-known distinct notions,
and the join and mapping cylinder of metrizable uniform spaces are defined
in \cite{M2}.

To include arbitrary pushouts, such as mapping cylinders of general monotone maps,
the geometric realization functor has to be extended to preposets.
Let us emphasize that the mapping cylinder fails to be a poset (and so is only
a preposet) for some monotone maps between posets that arise naturally in practice:
\begin{itemize}
\item the diagonal embedding $P\to P\x P$ (see
\cite{M1}*{Example \ref{comb:diagonal}});
\item approximations to uniformly continuous maps
(see Theorem \ref{monotone approximation});
\item bonding maps between nerves of covers
(see Example \ref{bad bonding map} and Lemma \ref{6.1}).
\end{itemize}
One can, however, emulate the mapping cylinder of a monotone map between posets
by a certain poset, without changing the relative uniform homotopy equivalence
class of the geometric realization (Corollary \ref{hmc}).

Unfortunately, preposets do not quite live up to our expectations.
It turns out that there exists a preposet $X$ whose geometric realization
is not uniformly locally contractible; worse yet, it contains essential
loops of arbitrarily small diameters (Example \ref{counterexample2}).
In fact, $|X|$ is not uniformly homotopy equivalent
to the geometric realization of any poset (Theorem \ref{CCP-homotopy}).

\begin{maintheorem} The geometric realization of every poset $P$ is uniformly locally
contractible.
\end{maintheorem}

This is saying that for each $\eps>0$ there exists a $\delta>0$ such that every two
$\delta$-close uniformly continuous maps from an arbitrary metric space into $|P|$
are uniformly $\eps$-homotopic with values in $|P|$ (Theorem \ref{LCU}).

On the other hand, there exists a poset $Y$ whose geometric realization is not
a uniform ANR; worse yet, for each $\eps>0$ it contains an embedded sphere of
some dimension that is essential in $|Y|$, but null-homotopic in
the $\eps$-neighborhood of $|Y|$ in some fixed uniform ANR (Example
\ref{counterexample3}).
In fact, $|Y|$ is not uniformly homotopy equivalent to
the geometric realization of any {\it conditionally complete} poset
(Theorem \ref{Hahn-homotopy}), that is, a poset where every nonempty set has either
the least upper bound or no upper bound whatsoever.

Conditionally complete posets include (the posets of nonempty faces of)
simplicial and cubical complexes but exclude for instance (those of) some
simplicial pseudo-complexes in the sense of Hilton--Wylie (where each simplex
is embedded, but different simplices may have more than one face in common).
Fortunately, the geometric realization of a conditionally complete poset
does turn out to be a uniform ANR (Corollary \ref{CCP-ANR}).
It is the geometric realizations of conditionally complete posets that we
call {\it uniform polyhedra}.

\begin{maintheorem}\label{main1} Uniform polyhedra are uniform ANRs.
\end{maintheorem}

This is saying that whenever a uniform polyhedron is uniformly embedded as
a closed subset of a metrizable uniform space $X$, then it is a uniform retract
of some its uniform neighborhood in $X$.%
\footnote{An examination of the proof reveals that uniform polyhedra endowed with
the path metric are in fact Lipschitz ANRs.
It seems unlikely that they can be remetrized as $1$-Lipschitz ANRs.}

\begin{example} Let $S^1$ be (the poset of nonempty faces of) 
$\partial\Delta^1$, the boundary of the $1$-simplex, with a
basepoint chosen at some vertex.

The countable product $\prod_\N |S^1|$ is not even locally 
$1$-connected, yet the weak product $\prod^w_\N S^1$ (see 
\cite{M1}*{\S\ref{comb:weak join}}) is a countable cubical complex, 
whence $|\prod^w_\N S^1|$ is a uniform ANR.
In fact, $\prod_\N|S^1|$ and $|\prod^w_\N S^1|$ are not even weakly
homotopy equivalent: $\pi_1(\prod_\N|S^1|)\simeq\prod_\N\Z$, whereas
$\pi_1(|\prod_\N S^1|)\simeq\bigoplus_\N\Z$.

Of course, if we didn't confine our attention to countable posets, 
a third option would be $|\prod_\N S^1|$, the full $l_\infty$ space 
of functions $\N\to|S_1|$.
This is also a uniform ANR (albeit an inseparable one), and
$\pi_1(|\prod_\N S^1|)$ is the group of all bounded functions 
$\N\to\Z$, an answer that also occurs in the contexts of simplicial 
sets (see \cite{Th}*{p.\ 307}) and Alexandroff spaces (see \cite{Ra}*{\S4}).
\end{example}

Theorem \ref{main1} is arguably the hardest result of the paper.
The case of simplicial complexes is somewhat easier
(Theorem \ref{simplicial ANR}), and actually reduces to the case of
``cubohedra'', which was already treated in
\cite[Theorem \ref{metr:cubohedron}]{M2}.
(A cubohedron is a cubical complex that is a subcomplex of the standard cubical
lattice in $c_0$.)
The finite-dimensional simplicial case is equivalent to a result of Isbell
\cite[Theorem 1.9]{I1}, taking into account that his geometric realization
is uniformly homeomorphic to ours in the case of a finite-dimensional
simplicial complex.

Simplicial and/or cubical complexes alone do not form a closed theory with
respect to a sufficient supply of operations that work uniformly.
The mapping cylinder of a simplicial map, while being a poset, is certainly
not a simplicial complex in general.
Worse yet, it need not even be a conditionally complete poset
(\cite[Example \ref{comb:non-CCP MC}]{M1}).
This is arguably the single most important deficiency of our theory; but up to
uniform homotopy, there is a remarkable workaround.
The {\it thickened mapping cylinder} $TMC(f)$ of a monotone map $f\:P\to Q$ between
posets is a natural subset of the join $P*Q$, related to the graph of $f$
(see \cite{M1}).
If $P$ and $Q$ are conditionally complete, then so is $TMC(f)$, and up to relative
uniform homotopy equivalence of geometric realizations it is the same as the usual
mapping cylinder (Theorem \ref{tmc2}).
Using this, we obtain the following result (Theorem \ref{Mather trick}):

\begin{maintheorem}
If $X$ is a uniform ANR, then $X\x\R$ is uniformly homotopy equivalent to
a uniform polyhedron.
\end{maintheorem}

We also establish what appears to be the ultimate inverse limit representation
theorem, and a uniform analogue of Hanner's characterization of ANRs
(Theorems \ref{inverse limit} and \ref{domination}):

\begin{maintheorem}\label{main2}
(a) Every separable metrizable complete uniform space is the limit of
an inverse sequence of uniformly continuous maps between geometric realizations
of simplicial complexes.

(b) A separable metrizable uniform space is a uniform ANR
if and only if it is uniformly $\eps$-homotopy dominated by the geometric
realization of a simplicial complex for each $\eps>0$.
\end{maintheorem}

Weaker forms of (a) and (b), with cubohedra in place of simplicial complexes,
are contained in the author's previous paper \cite[Corollary \ref{metr:Hanner2} and
Theorem \ref{metr:intersection of cubohedra2}]{M2}.
The present versions, being based on nerves of uniform covers rather than uniform
neighborhoods in $c_0$, have the advantage of greater flexibility, so they can
be straightforwardly adapted, for instance, to equivariant contexts.

The compact case of (a) is due to Lefschetz \cite{Le2}, who elaborated on Alexandroff's 
earlier work (see \cite{Al}).
The uniformly finitistic case of (a) is equivalent to a result of
Isbell \cite[Theorem V.34]{I3} (see also \cite[Lemma 1.6]{CI}), though he made 
a minor mistake in his proof, which
can be corrected as we show in the proof of Theorem \ref{inverse limit}.
The uniformly finitistic case of the ``only if'' direction in (b)
is equivalent to another result of Isbell \cite[7.3]{I2}.
A topological version of (a) was also proved by Isbell \cite{I6}*{Corollary 3.7}
(see also \cite[Theorem 3]{I4}, \cite[Lemma 1.6]{CI}); another proof (with possibly
infinite-dimensional polyhedra) is found in \cite{Sa2}*{4.10.11}.

\subsection{Isbell's problem}

Let us now discuss how our results address Isbell's research program.
One attempted formulation appears in his book ``Uniform spaces'':

\medskip
\begin{center}
\parbox{14.5cm}{\small
``{\bf Research Problem} $B_2$.
{\sc Infinite-dimensional polyhedra.} There is a large problem here, namely
the systematic investigation of topological and uniform realizations
of abstract simplicial complexes. One important paper in the literature
(Dowker [1952]) has examined this problem, not from a categorical viewpoint.
Dowker's work tends to confirm, what many successful applications suggest,
that for topology J. H. C. Whitehead's realization by CW-complexes has
strong claims to preference. Its definition is as simple as could be: [...]
But Dowker's work highlights the point that the suitability of CW-complexes
for homology and homotopy is not conclusive; many realizations are topologically
distinct but homotopy equivalent.

By now substantial experience in uniform spaces supports the pretensions
of uniform complexes, in the finite-dimensional case only. (In any case they
are homotopy equivalent (topologically) with CW-complexes; Dowker [1952].)
In general they are not satisfactory, e.g.\ because they lack subdivisions.
One can save the subdivisions, or any sufficiently narrow requirement, by
tailoring a definition to fit. (Kuzminov and \v Svedov [1960] define a realization
for which IV.6 [the covers by the stars of vertices in iterated barycentric
subdivisions form a basis of the uniformity] is always valid; but all their
applications are in the finite-dimensional case.) The real problem holding up
progress is, what applications can be made of infinite-dimensional polyhedra in
the general theory of uniform spaces? It would probably be beside the point to
carry out a formal investigation of realizations with no specific applications
in mind.'' \cite{I3} (1964)}
\end{center}

\medskip
Comments: (i) As stated, the problem is quite vague, but some clarification
on what kind of infinite-dimensional uniform complexes are sought here can be
inferred from Isbell's previous comments in his earlier papers, quoted below.

(ii) The covers by the stars of vertices in iterated {\it canonical}
subdivisions do form a basis of the uniformity of our uniform polyhedra
(see Theorem \ref{canonical}).
In fact, using the canonical subdivision we are able to extend Brouwer's 
simplicial approximation theorem for finite complexes to the infinite 
case (Theorem \ref{monotone approximation}).

(iii) One possible reason for the widespread preference for the CW topology could be Milnor's 1959 theorem 
that the space of maps from a pair of finite CW complexes to a pair of arbitrary CW-complexes (with 
the compact-open topology) is homotopy equivalent to a CW-complex \cite{Mil}.
(At least, this theorem was, in Milnor's own words in the very beginning of his paper, ``intended as 
propaganda for'' the class ``of all spaces which have the homotopy type of a CW-complex''.)
But the metrizable viewpoint actually fares even better in this respect: the space of maps from a pair
of compacta to a pair of ANRs (with the compact-open topology) is itself an ANR
(see \cite{Sa2}*{6.2.10(6) and 6.1.9(8)}).
In particular, the space of maps from a pair of finite polyhedra to a pair of arbitrary polyhedra
is homotopy equivalent to a polyhedron.

In the uniform setting, we have a still better result.
The space of uniformly continuous maps from a pair of metrizable uniform spaces (possibly non-compact!)\ to 
a pair of uniform ANRs (with the uniformity of the sup metric) is a uniform ANR 
\cite{M2}*{Theorem \ref{metr:A.3''}}.
In particular, the space of uniformly continuous maps between pairs of arbitrary uniform polyhedra 
is uniformly homotopy dominated by a uniform polyhedron, and becomes uniformly homotopy equivalent 
to a uniform polyhedron upon crossing with $\R$.

\medskip
\begin{center}
\parbox{14.5cm}{\small
``It should be noted that the theorem [that Isbell's finite-dimensional uniform
simplicial complexes are complete uniform ANRs] as stated is trivially false for
arbitrary uniform complexes, since some of them are incomplete.
It is false for many complete ones also. It seems likely that
strong results might be gotten by using some suitable uniformity for a complex,
different from the one defined by $\max|x_\alpha-y_\alpha|$, though not
necessarily different for finite-dimensional complexes.'' \cite{I1} (1959)
}
\end{center}

\medskip
Comments: indeed, with our adjusted uniformity, the theorem is now extended to
infinite-dimensional simplicial complexes (Theorem \ref{simplicial ANR}), apart
from the completeness.

Isbell only considered complete uniform ANRs, as well as their non-metrizable
generalization, called ANRUs.
Indeed he showed that completeness is forced by including certain non-metrizable
spaces in the setup.
However, if one works in the category of metrizable uniform spaces, then there
is no such restriction, as noticed independently by G. L. Garg and N. T. Nhu in
the 1970s (see \cite[\S\ref{metr:uniform-anrs}]{M2}).
The resulting theory of (possibly non-complete) uniform ANRs has been developed
only recently but turns out to be very flexible \cite{M2}.
These uniform ANRs are homotopy complete (i.e.\ the completion can be
instantaneously taken off the remainder)
\cite[Theorem \ref{metr:uniform ANR}]{M2}, which for all practical purposes
makes them just as easily manageable as if they were complete.

\medskip
\begin{center}
\parbox{14.5cm}{\small
``I should like to repeat the remark from [\cite{CI} and \cite{I1}]
that the uniform complexes are clearly not the right concept for the
infinite-dimensional case.
The finite-dimensionality in 7.2 [that every uniformly finitistic complete
uniform space $X$ is an inverse limit (i) of finite-dimensional uniform simplicial
complexes, (ii) of inverse limits of nerves of appropriate uniform covers of $X$]
and 7.3 [that every uniformly finitistic ANRU is uniformly $C$-homotopy dominated by 
a finite dimensional uniform simplicial complex for each uniform cover $C$] may
very likely appear for no better reason than that we do not have the right uniformity
for the complexes.'' \cite{I2} (1961)
}
\end{center}

\medskip
Comments: indeed, the said results extend to all separable uniform spaces,
using our adjusted uniformity on (countable) infinite-dimensional simplicial
complexes (Corollary \ref{inverse limit2} and Theorems \ref{inverse limit-general}
and \ref{domination-general}), except that cubical, rather than simplicial,
complexes are needed for the generalization of 7.2(i).

Neither of the results extends to inseparable uniform spaces by the negative solution
to Isbell's ``Research Problem $B_3$'', obtained independently by J. Pelant and
E. V. Shchepin in 1975, and clarified recently by A. Hohti (see references in
\cite[\S\ref{metr:finiteness}]{M2}).
Hohti's result is that the unit ball of the inseparable space $l_\infty$ is not
point-finite, i.e.\ does not have a basis of uniform covers that have a finite
multiplicity at each point.

In what follows we assume some degree of familiarity with \cite[Chapter 2]{M1}
and the entire paper \cite{M2}.

\section{Geometric realization via embedding}\label{geometric realization}

Rectilinear geometric realization of a {\it finite} preposet is described
in \cite[\S\ref{comb:examples of posets}]{M1}.
\cite[Lemma \ref{comb:2.6}]{M1} yields a realization of an arbitrary preposet
within some {\it combinatorial} simplex.
This however avoids the issue of sensible geometric realization of
the simplex itself.
Let us now address it.
In the case of finite simplicial complexes, the following construction
yields essentially the same result as in \cite{SS}.
(The author found it while being unaware of \cite{SS}.)

\begin{definition}[Geometric realization]
If $S$ is a set, the functional space $[0,1]^S$ of all maps $f\:S\to [0,1]$ is
endowed with the metric $d(f,g)=\sup_{s\in S}|f(s)-g(s)|$.
Note that the underlying uniform space of $[0,1]^S$ is just $U(S,[0,1])$,
where $S$ is endowed with the discrete uniformity.
The subset $\{0,1\}^S$ of $[0,1]^S$ may be identified with the power set
$\text{\it 2}^S$, by associating to every subset $T\incl S$ its {\it characteristic
function} $\chi_T\:S\to [0,1]$, defined by $\chi_T(T)=1$ and $\chi_T(S\but T)=0$.
Note that if $S$ is finite, $[0,1]^S$ is just the usual $|S|$-dimensional cube
with the $l_\infty$ metric, and $\{0,1\}^S$ is the set of its vertices.

Let us recall the embedding of a poset into a simplex given by
\cite[Lemma \ref{comb:2.6}(b)]{M1}.
Given a poset $P=(\mathcal P,\le)$, we identify every $p\in P$ with the cone
$\fll p\flr$, viewed as an element of
${\it 2}^{\mathcal P}=\{0,1\}^{\mathcal P}\incl [0,1]^{\mathcal P}$.
Then the {\it geometric realization} of $P$ is a subspace $|P|\incl [0,1]^{\mathcal P}$,
defined to be the union of the convex hulls of all nonempty finite chains of $P$.
Note that the cube vertex at the origin, $\{0,0,\dots\}$ is never in $|P|$
since $\fll p\flr$ is never empty.

More generally, given a preposet $P=(\mathcal P,\prec)$, following
\cite[Lemma \ref{comb:2.6}(a)]{M1}
we inject it into its transitive closure $\left<P\right>=(\mathcal P,\prec\!\prec)$,
and identify every $p\in P$ with the cone $\fll p\flr_{\prec\!\prec}$ in
the transitive closure, viewed again as an element of
${\it 2}^{\mathcal P}=\{0,1\}^{\mathcal P}\incl [0,1]^{\mathcal P}$.
Then the {\it geometric realization} of $P$ is a subspace $|P|\incl [0,1]^{\mathcal P}$,
defined to be the union of the convex hulls of all nonempty finite chains of $P$.
Every chain of $P$ is a chain of $\left<P\right>$, hence $|P|\incl |\left<P\right>|$.

Since $[0,1]^{\mathcal P}$ is complete, the closure $\overline{|P|}$ of $|P|$
in $[0,1]^{\mathcal P}$ is uniformly homeomorphic to the completion of $|P|$.
Note that each convex hull in the definition of $|P|$ is compact, and
therefore separable.
Hence if $P$ is countable, $|P|$ is separable; consequently $\overline{|P|}$ is
a Polish uniform space, that is, a separable metrizable complete uniform space.
\end{definition}

\begin{definition}[Generalized geometric realization]\label{generalized gr}
Let $P$ be a preposet, and fix an injection $j\:P\to 2^S$ for some $S$.
The underlying set $\text{\it 2}^S$ of the poset $2^S$ is identified, as
before, with $\{0,1\}^S\incl[0,1]^S$.
Then the {\it geometric $j$-realization} of $P$ is a subspace $|P|_j\incl [0,1]^S$,
defined to be the union of the convex hulls of the $j$-images of all nonempty
finite chains in $P$.
We note four basic examples:

$\bullet$ The injection $j_P$ of \cite[Lemma \ref{comb:2.6}]{M1} yields
the {\it standard} geometric realization $|P|_{j_P}=|P|$.

$\bullet$ If $P$ has a least element, then there is a more economical embedding
$j'_P\:P\emb 2^{\partial^*P}$, $p\mapsto\fll p\flr\cap\partial^*P$.
We call $|P|'\bydef |P|_{j'_P}$ the {\it reduced} geometric realization of $P$.

$\bullet$ On the other hand, if $P$ is an atomic poset, then
\cite[Lemma \ref{comb:2.9}]{M1} yields
a more economical embedding $a_P\:P\emb\Delta^{A(P)}$.
We call $|P|^\bullet\bydef |P|_{a_P}$ the {\it atomic} geometric realization of $P$.
Note that $a_P\:\Delta^\Lambda\to\Delta^\Lambda$ is the identity.

$\bullet$ Finally if $P=C^*Q$, where $Q$ is an atomic poset, then $a_Q$ extends
to $a'_P\:P\emb 2^{A(P)}$.
We still call $|P|^\bullet\bydef |P|_{a'_P}$ the {\it atomic} geometric realization of $P$.
Note that $a'_{2^\Lambda}\:2^\Lambda\to 2^\Lambda$ is the identity.
\end{definition}

\begin{definition}[Geometric realization of cone precomplex]\label{q_0}
If $P=(\mathcal P,\le)$ is a cone complex, then $\mathcal P$ is countable,
and the cone $\fll p\flr$ of every $p\in P$ is finite.
Then $j_P(P)\incl\Delta^{\mathcal P}$ lies in the weak $\mathcal P$-simplex
$\Delta^{\mathcal P}_w$, that is the set of all nonempty finite subsets of
$\mathcal P$ (see \cite[\S\ref{comb:weak join}]{M1}).
If $C$ is a finite chain of $\Delta^{\mathcal P}$, its convex hull lies in
$[0,1]^{\sup C}\x\{0\}^{\mathcal P\but\sup C}$.
Consequently, $|\Delta^{\mathcal P}_w|^\bullet$ lies in
$q_0\bydef([0,1],0)^{(\mathcal P^+,\infty)}$, where
$\mathcal P^+=\mathcal P\cup\{\infty\}$ is the one-point compactification of
the discrete space $\mathcal P$.
Since $q_0$ is complete, $\overline{|\Delta^{\mathcal P}_w|^\bullet}$ also lies
in $q_0$.
More generally, if $P$ is a cone precomplex, its transitive closure
$\left<P\right>$ is a cone complex, and $\overline{|P|}$ lies in
$\overline{|\left<P\right>|}\incl\overline{|\Delta^{\mathcal P}_w|^\bullet}\incl q_0$.

It is not hard to see that $q_0$ itself is identified with
$\overline{|2^{\mathcal P}_w|^\bullet}$.
On the other hand, note that $\overline{|\Delta^\N_w|^\bullet}\subsetneqq
\overline{|\Delta^\N|^\bullet}$, since $|\Delta^\N|^\bullet$ includes convex hulls of
finite chains of infinite subsets of $\N$.
In fact, since there are uncountably many of infinite subsets of $\N$,
$|\Delta^\N|^\bullet$ is not separable.

If $P$ is a cone complex that is {\it Noetherian}, i.e.\ contains no infinite
chain (which could only be ascending since all cones are finite), then $|P|$
is closed in $[0,1]^{\mathcal P}$ and hence complete.
It follows that $|P|$ is complete also for every Noetherian cone {\it precomplex}
$P$, that is a preposet whose transitive closure is a Noetherian cone complex.
\end{definition}

\begin{example} If $P$ is a poset and $j\:P\to 2^S$ is an injection but not
an embedding, then $|P|_j$ need not be isometric to $|P|$.
Indeed, let $P$ be the subposet of $2^{\{a,b,c\}}$ with elements
$\emptyset$, $\{a\}$, $\{a,b\}$, $\{a,b,c\}$, $\{c\}$.
Let $j\:P\to 2^{\{a,b,c\}}$ re-embed $\{c\}$ onto $\{b\}$ and fix the other elements.
Let $C$ be the chain $\{\emptyset,\{a\},\{a,b\},\{a,b,c\}\}$ and let $D$
be the chain $\{\emptyset,\{c\},\{a,b,c\}\}$ in $P$.
A point $x\in|C|\incl |P|$ has coordinates $(x_a,x_b,x_c)$ for some numbers
$1\ge x_a\ge x_b\ge x_c\ge 0$, and a point $y'\in |D|$ has coordinates
$(y_1,y_2)$ for some numbers $1\ge y_2\ge y_1\ge 0$.
Then the image $y$ of $y'$ in $|P|$ has coordinates $(y_2,y_2,y_1)$, and the image
$y_j$ of $y'$ in $|P|_j$ has coordinates $(y_2,y_1,y_2)$.
Setting $(x_a,x_b,x_c)=(\frac34,\frac12,\frac14)$ and $(y_1,y_2)=(\frac34,\frac12)$,
we obtain $d(x,y)=\frac12$ and $d(x,y_j)=\frac14$.
\end{example}

\begin{example} Here is a simpler example of the same kind.
Let $P$ be the subposet of $2^{\{a,b\}}$ with elements $\emptyset$,
$\{a\}$ and $\{b\}$.
Let $j\:P\to 2^{\{a,b\}}$ re-embed $\{a\}$ onto $\{a,b\}$ and fix the
other elements.
Let $C$ be the chain $\{\emptyset,\{a\}\}$ and let $D$ be the singleton chain
$\{\{b\}\}$ in $P$.
Let $x\in|C|\incl |P|$ have coordinates $(x_a,x_b)=(0,\frac12)$; then its
image $x_j$ in $|P|_j$ has coordinates $(\frac12,\frac12)$.
The point $y\in|D|=\{y\}$ has coordinates $(1,0)$.
Hence $d(x,y)=1$ and $d(x_j,y)=\frac12$.
\end{example}

\begin{theorem}\label{isometry} If $P$ is a poset and $j\:P\to 2^S$ is
an embedding, then $|P|_j$ is isometric to $|P|$.
\end{theorem}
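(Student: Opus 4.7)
The plan is to construct the natural affine map $\phi\:|P|\to|P|_j$ that sends $\sum t_i\chi_{\fll p_i\flr}\mapsto\sum t_i\chi_{j(p_i)}$ on each chain-simplex (the convex hull of the $\chi_{\fll p_i\flr}$ for a chain $p_0<\dots<p_m$ in $P$), and to prove that it is an isometric bijection. Well-definedness reduces to uniqueness of the chain representation of a point $\alpha=\sum t_i\chi_{\fll p_i\flr}\in|P|$ (with $t_i>0$, $\sum t_i=1$): the support $\{r\in\mathcal P:\alpha_r>0\}$ equals $\fll p_m\flr$, which recovers $p_m$; then $t_m=\alpha_{p_m}$, and one iterates on $\alpha-t_m\chi_{\fll p_m\flr}$. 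Since $j$ is an embedding, $j(p_0)\subsetneq\dots\subsetneq j(p_m)$, so the $\chi_{j(p_i)}$ are affinely independent and the affine extension is unambiguous; surjectivity of $\phi$ onto $|P|_j$ is immediate from the definition of $|P|_j$, and injectivity will follow from the isometry.

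The heart of the proof is a direct coordinate comparison. For $\alpha=\sum_i t_i\chi_{\fll p_i\flr}$ and $\beta=\sum_\ell u_\ell\chi_{\fll q_\ell\flr}$ on chains $p_0<\dots<p_m$ and $q_0<\dots<q_n$, the set $\{i:p_i\ge r\}$ is always an upper segment $\{k(r),\dots,m\}$ with $k(r):=\min\{i:p_i\ge r\}\in\{0,\dots,m,\infty\}$, so $\alpha_r=T(k(r))$, where $T(k):=\sum_{i\ge k}t_i$ and $T(\infty):=0$; analogously $\beta_r=U(l(r))$ and $\phi(\alpha)_s-\phi(\beta)_s=T(k^s)-U(l^s)$ with $k^s:=\min\{i:s\in j(p_i)\}$, $l^s:=\min\{\ell:s\in j(q_\ell)\}$. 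For the direction $\|\phi(\alpha)-\phi(\beta)\|_\infty\le\|\alpha-\beta\|_\infty$, fix $s\in S$ and set $r_1:=p_{k^s}$, $r_2:=q_{l^s}$ (the cases $k^s=\infty$ or $l^s=\infty$ being easier). Order-preservation by $j$ gives $q_\ell\ge p_{k^s}\Rightarrow s\in j(p_{k^s})\subseteq j(q_\ell)\Rightarrow\ell\ge l^s$, hence $l(r_1)\ge l^s$ and $\alpha_{r_1}-\beta_{r_1}=T(k^s)-U(l(r_1))\ge T(k^s)-U(l^s)=\phi(\alpha)_s-\phi(\beta)_s$; symmetrically $\alpha_{r_2}-\beta_{r_2}\le\phi(\alpha)_s-\phi(\beta)_s$. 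The elementary observation ``$B\le C\le A$ implies $|C|\le\max(|A|,|B|)$'' then yields $|\phi(\alpha)_s-\phi(\beta)_s|\le\max_{i=1,2}|\alpha_{r_i}-\beta_{r_i}|\le\|\alpha-\beta\|_\infty$.

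For the reverse inequality, fix $r\in P$ with $\alpha_r-\beta_r\ge 0$ (WLOG). In the main case $l(r)\ge 1$, order-\emph{reflection} by $j$ converts $r\not\le q_{l(r)-1}$ into $j(r)\not\subseteq j(q_{l(r)-1})$, and we pick $s\in j(r)\setminus j(q_{l(r)-1})$; then $s\in j(r)\subseteq j(p_{k(r)})$ forces $k^s\le k(r)$, while $s\notin j(q_{l(r)-1})$ forces $l^s\ge l(r)$, so $\phi(\alpha)_s-\phi(\beta)_s\ge T(k(r))-U(l(r))=\alpha_r-\beta_r\ge 0$. The edge case $l(r)=\infty$ (so $\beta_r=0$) uses $p_{k(r)}\not\le q_n$---otherwise $r\le q_n$ would contradict $l(r)=\infty$---to pick $s\in j(p_{k(r)})\setminus j(q_n)$, forcing $l^s=\infty$ and $k^s\le k(r)$; the remaining cases (including $j(r)=\emptyset$, which by the embedding property forces $r=\min P$ and thereby $\alpha_r-\beta_r=0$) are trivial. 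The principal difficulty throughout is that the restrictions of principal filters $\{p':p'\ge r\}$ and of the ``coordinate filters'' $\{p':s\in j(p')\}$ to the finite subposet $\{p_0,\dots,p_m\}\cup\{q_0,\dots,q_n\}$ yield genuinely distinct families of upper sets, as the examples preceding the theorem show; the sandwich arguments circumvent this by comparing $T(k)-U(l)$ values rather than the upper sets themselves, with the crucial nonemptinesses $j(r)\setminus j(q_{l(r)-1})\ne\emptyset$ and $j(p_{k(r)})\setminus j(q_n)\ne\emptyset$ supplied precisely by the order-reflection property of the embedding $j$ (and thereby explaining why the isometry fails for the mere order-preserving injections of the preceding examples).
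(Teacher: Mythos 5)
Your proof is correct and takes a genuinely different route from the paper's. The paper reduces to the case $P=[n]$, then in the general case augments both chains $A,B$ to contain $\hat 0,\hat 1$, builds an interleaving combinatorial structure (the pairs $(k_i,l_i)$, $(k'_i,l'_i)$ and the sets $\Delta,\Delta'$), and proves the explicit formula $d(\hat\alpha,\hat\beta)=\max_{(k,l)\in\Delta\cup\Delta'}|\alpha_k-\beta_l|$, whose right-hand side is manifestly independent of $j$; the isometry then follows since this holds for all embeddings including $j_P$. You instead prove the two inequalities $\|\phi(\alpha)-\phi(\beta)\|_\infty\le\|\alpha-\beta\|_\infty$ and $\ge$ separately, each by a sandwich argument in the appropriate index set (for each $s\in S$ find $r_1,r_2\in\mathcal P$ bracketing $\phi(\alpha)_s-\phi(\beta)_s$, and conversely for each $r$ find an $s$ dominating $\alpha_r-\beta_r$), using the same ``tail-sum'' coordinate algebra $\alpha_r=T(k(r))$, etc., but never needing the explicit $\Delta,\Delta'$ machinery or the auxiliary $\hat 0,\hat 1$. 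What the paper's approach buys is an explicit combinatorial formula for the distance that can potentially be reused (it is, implicitly, in the proofs of Theorem \ref{CW} and Theorem \ref{LCU}, which re-derive pieces of this bookkeeping); what your approach buys is a shorter, more elementary argument that also cleanly isolates \emph{where} the two halves of the embedding hypothesis are used---order-preservation for the $\le$ direction, order-reflection for the $\ge$ direction---thereby explaining directly why the preceding examples of non-embedding injections fail.
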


This trivially implies that if $P$ is a preposet and $j\:P\to 2^S$ is
an injection that factors through an embedding of the transitive closure,
then $|P|_j$ is isometric to $|P|$.

\begin{proof}
We first consider the case where $P$ is the totally ordered $n$-element poset
$[n]=(\{1,\dots,n\},\le)$, where $\le$ has the usual meaning.
To avoid confusion, we consider the standard embedding $j_{[n]}$ of $[n]$ in
$2^{\{1,\dots,n\}}$, that is, $j_{[n]}(i)=\{1,\dots,i\}$.
Then each $j_{[n]}(i)\in 2^{\{1,\dots,n\}}$ is identified with the vertex
$(0,\dots,0,1,\dots,1)$ ($n-i$ zeroes, $i$ ones) of the simplex $|[n]|\incl [0,1]^n$.
Similarly each $P_i\bydef j(i)\in 2^S$ is identified with a point of
$\{0,1\}^S\incl [0,1]^S$.
Then $jj_{[n]}^{-1}$ extends uniquely to an affine map $\Phi_j\:|[n]|\to |[n]|_j$.
A point $x=(x_1,\dots,x_n)\in [0,1]^n$ lies in $|[n]|$ if and only if
$1=x_1\ge x_2\ge\dots\ge x_n\ge 0$.
Let us write $\hat x=\Phi_j(x)$.
It is easy to see that $\hat x(s)=x_i$ for each $s\in P_i\but P_{i-1}$,
where $P_0=\emptyset$, and $\hat x(s)=0$ for $s\notin P_n$.

Given another point $y\in |[n]|$, since each $P_i\but P_{i-1}$ is nonempty,
it follows that $d(\hat x,\hat y)=\sup_{s\in S}|\hat x(s)-\hat y(s)|$ equals
$\max_{i\in [n]} |x_i-y_i|=d(x,y)$.
Thus $\Phi_j$ is an isometry, which completes the proof of the case $P=[n]$.

We now resume the proof of the general case, where $P=(\mathcal P,\le)$ is
an arbitrary poset.
Given a finite chain $C$ in $P$, we may represent $C$ as
the image of the poset $[k]$, where $k$ is the cardinality of $C$, under
the (unique) isomorphism $c\:[k]\to C$.
Consider the compositions $c\:[k]\xr{c}P\incl 2^{\mathcal P}$ and
$jc\:[k]\xr{c}P\xr{j}2^S$.
These extend to the isometries $\Phi_c\:|[k]|\to|[k]|_c\incl |P|$ and
$\Phi_{jc}\:|[k]|\to|[k]|_{jc}\incl |P|_j$.
The compositions $|[k]|_c\xr{\Phi_c^{-1}}|[k]|\xr{\Phi_{jc}}|[k]|_{jc}$ agree
with each other for different $c$, and thus combine into a map
$\Phi_j\:|P|\to |P|_j$ that is an isometry on the convex hull of every finite
chain of $P$.

To complete the proof, it suffices to show that all $x,y\in |P|$ satisfy
$d(\Phi_j(x),\Phi_j(y))=d(x,y)$.
This will follow once we prove that $d(\Phi_j(x),\Phi_j(y))$ does not depend
on $j$.
We may assume without loss of generality that $P$ has the least element $\hat 0$
and the greatest element $\hat 1$; for if $P$ has no least (resp.\ greatest)
element, then $S\notin j(P)$ (resp.\ $\emptyset\notin j(P)$), and therefore $j$
extends to an embedding of $CP$ (resp.\ $C^*P$) in $2^S$ defined by
$\hat 1\mapsto S$ (resp.\ $\hat 0\mapsto\emptyset$).
Let $A$ (resp.\ $B$) be some chain in $P$ containing $\hat 0$ and $\hat 1$,
whose convex hull contains $x$ (resp.\ $y$).
We consider the (unique) isomorphisms $a\:[m]\to A$ and $b\:[n]\to B$,
where $m$ is the cardinality of $A$ and $n$ is the cardinality of $B$.
Thus $a(1)=\hat 0=b(1)$ and $a(m)=\hat 1=b(n)$.

Let $\prec$ be the covering relation of the subposet $A\cup B$ of $P$.
(That is, $x,y\in A\cup B$ satisfy $x\prec y$ iff $x<y$ and there exists
no $z\in A\cup B$ such that $x<z<y$.)
Let $(k_1,l_1),\dots,(k_r,l_r)$ be all pairs in $[m]\x [n]$ such that either
$a(k_i)=b(l_i)$ or $a(k_i)\notin B$, $b(l_i)\notin A$ and $a(k_i)\prec b(l_i)$,
where each $k_i\le k_{i+1}$.
Let $Z\incl [r]$ be the set of indices $i$ such that $a(k_i)=b(l_i)$.
It is easy to see\footnotemark\ that each $k_i<k_{i+1}$ and each $l_i<l_{i+1}$.
\footnotetext{Indeed, suppose that $k_i=k_{i+1}$ (and $l_i\ne l_{i+1}$).
The cases (1) $i,i+1\in Z$; (2) $i\in Z$ and $i+1\notin Z$; (3) $i\notin Z$ and
$i+1\in Z$ are ruled out for trivial reasons.
In the remaining case (4) $i,i+1\notin Z$ we have either
$l_i<l_{i+1}$ or $l_i>l_{i+1}$.
Then either $a(k_{i+1})=a(k_i)<b(l_i)<b(l_{i+1})$ or
$a(k_i)=a(k_{i+1})<b(l_{i+1})<b(l_i)$.
Hence either $a(k_{i+1})\nprec b(l_{i+1})$ or $a(k_i)\nprec b(l_i)$, which
is a contradiction.
Thus $k_i<k_{i+1}$.
Next suppose that $l_i\ge l_{i+1}$.
Then $a(k_i)<a(k_{i+1})\le b(l_{i+1})\le b(l_i)$, so $i\notin Z$.
Hence $a(k_i)<a(k_{i+1})<b(l_{i+1})\le b(l_i)$, so $a(k_i)\nprec b(l_i)$, which is
a contradiction.}
Similarly let $(k'_1,l'_1),\dots,(k'_{r'},l'_{r'})$ be all pairs in $[m]\x [n]$
such that either $a(k'_i)=b(l'_i)$ or $a(k'_i)\notin B$, $b(l'_i)\notin A$ and
$a(k'_i)\succ b(l'_i)$; we may assume that each $k'_i<k'_{i+1}$ and each
$l'_i<l'_{i+1}$.
Let $Z'$ be the set of indices $i$ such that $a(k'_i)=b(l'_i)$.
We note that $k_1=l_1=k'_1=l'_1=1$, $k_r=k'_{r'}=m$ and $l_r=l'_{r'}=n$.

It is easy to see\footnotemark\ that $a(k_i+1)\nleq b(l_{i+1}-1)$ for each $i$.
\footnotetext{Indeed, if $a(k_i+1)=b(l_{i+1}-1)$, then $k_i+1=k_j$ and
$l_{i+1}-1=l_j$ for some $j$; hence $i<j<i+1$, which is a contradiction.
Suppose that $a(k_i+1)<b(l_{i+1}-1)$.
Since $<$ is the transitive closure of $\prec$, there exist
$\kappa\ge k_i+1$ and $\lambda\le l_{i+1}-1$ such that
$a(\kappa)\prec b(\lambda)$.
If $a(\kappa)\in B$, then $k_i<\kappa=k_j$ for some $j$ such that
$b(l_j)=a(k_j)=a(\kappa)<b(\lambda)$.
Hence $l_j<\lambda<l_{i+1}$, and therefore $i<j<i+1$, which is a contradiction.
Thus $a(\kappa)\notin B$, and similarly $b(\lambda)\notin A$.
Hence $k_i<\kappa=k_j$ and $l_{i+1}>\lambda=l_j$ for some $j$; hence $i<j<i+1$,
which is a contradiction.}
Let us write $A_i=ja(i)$ and $B_i=jb(i)$.
Since $j$ is an embedding, we obtain that $A_{k_i+1}\not\incl B_{l_{i+1}-1}$.
On the other hand, since $k_i<k_{i+1}$, we have
$A_{k_i+1}\incl A_{k_{i+1}}\incl B_{l_{i+1}}$; and similarly $A_{k_i}\incl B_{l_{i+1}-1}$.
Thus $A_{k_i+1}\but A_{k_i}$ has a nonempty intersection with
$B_{l_{i+1}}\but B_{l_{i+1}-1}$.
In other words, the set $\Sigma$ of all pairs $(\kappa,\lambda)$ such that
$A_\kappa\but A_{\kappa-1}$ has a nonempty intersection with
$B_\lambda\but B_{\lambda-1}$ includes the set $\Delta$ of all
pairs of the form $(k_i+1,l_{i+1})$.
By symmetry, $\Sigma$ also includes the set $\Delta'$ of all pairs of the form
$(k'_{i+1},l'_i+1)$.

We claim that for each $(\kappa,\lambda)\in\Sigma$ there exist
a $(k,l)\in\Delta$ and a $(k',l')\in\Delta'$ such that $k\le\kappa\le k'$
and $l'\le\lambda\le l$.
Indeed, if $a(\kappa)\le b(\lambda-1)$, then $A_\kappa\incl B_{\lambda-1}$; in
particular, $(\kappa,\lambda)\notin\Sigma$.
If $a(\kappa)$ and $b(\lambda-1)$ are incomparable, let $i$ be the maximal number
such that $\kappa>k_i$.
Then $\kappa\le k_{i+1}$, so $\lambda-1<l_{i+1}$.
Finally, if $a(\kappa)>b(\lambda-1)$, let $i$ be the maximal number such that
$\kappa>k_i$.
We claim that still $\lambda-1<l_{i+1}$.
Suppose on the contrary that $\lambda-1\ge l_{i+1}$.
Then $a(\kappa)>b(\lambda-1)\ge b(l_{i+1})\ge a(k_{i+1})$.
On the other hand, $\kappa\le k_{i+1}$ by our choice of $i$, and so
$a(\kappa)\le a(k_{i+1})$, which is a contradiction.
This completes the proof of the assertion on $(k,l)$; and the assertion on
$(k',l')$ is proved similarly.

We have $x=\Phi_a(\alpha)$ and $y=\Phi_b(\beta)$ for some
$\alpha=(\alpha_1,\dots,\alpha_m)\in |A|$ and some
$\beta=(\beta_1,\dots,\beta_n)\in |B|$.
Let us denote $\Phi_j(x)=\Phi_{ja}(\alpha)$ by $\hat\alpha$ and
$\Phi_j(y)=\Phi_{jb}(\beta)$ by $\hat\beta$.
We have $d(\hat\alpha,\hat\beta)=\sup_{s\in S}|\hat\alpha(s)-\hat\beta(s)|$.
Here $\hat\alpha(s)=\alpha_\kappa$ for each $s\in A_\kappa\but A_{\kappa-1}$
(where $A_0=\emptyset$) and $\hat\alpha(s)=0$ for $s\notin A_m$; similarly,
$\hat\beta(s)=\beta_\lambda$ for each $s\in B_\lambda\but B_{\lambda-1}$
(where $B_0=\emptyset$) and $\hat\beta(s)=0$ for $s\notin B_n$.
Hence $d(\hat\alpha,\hat\beta)=\max_{(\kappa,\lambda)\in\Sigma}
|\alpha_\kappa-\beta_\lambda|$.

Next we recall that $1=\alpha_1\ge\alpha_2\ge\dots\ge\alpha_m\ge 0$
and $1=\beta_1\ge\beta_2\ge\dots\ge\beta_n\ge 0$.
In particular, $k\le\kappa\le k'$ implies
$\alpha_k\ge\alpha_\kappa\ge\alpha_{k'}$; and $l'\le\lambda\le l$ implies
$\beta_{l'}\ge\beta_\lambda\ge\beta_l$.
Hence $\alpha_k-\beta_l\ge\alpha_\kappa-\beta_\lambda\ge\alpha_{k'}-\beta_{l'}$,
which implies $|\alpha_\kappa-\beta_\lambda|\le
\max\{|\alpha_k-\beta_l|,\,|\alpha_{k'}-\beta_{l'}|\}$.
Thus
$$d(\hat\alpha,\hat\beta)=\max_{(k,l)\in\Delta\cup\Delta'}
|\alpha_k-\beta_l|.$$
The right hand side does not depend on $j$; therefore so does the left hand
side, that is, $d(\Phi_j(x),\Phi_j(y))$.
\end{proof}

\begin{corollary}\label{subposet realization} If $Q$ is a subpreposet of
a preposet $P$, then $|Q|$ admits a natural isometric embedding in $|P|$.
\end{corollary}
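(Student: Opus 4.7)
The plan is to apply Theorem \ref{isometry} to the injection $j\:Q\to 2^{\mathcal{P}}$ obtained by restricting the standard injection $j_P\:P\to 2^{\mathcal{P}}$ to $Q$; that is, $j(q)=\fll q\flr_{\prec\!\prec_P}$, the cone of $q$ in the transitive closure of $P$. The image $j(Q)$ lies in the very same power set that hosts $|P|$, so $j$ is the natural candidate for producing an embedding of $|Q|$ directly into $|P|$.

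First I would verify that $j$ factors through an order embedding of the transitive closure $\langle Q\rangle$ in $2^{\mathcal{P}}$. Concretely, $j$ can be written as the composition
\[Q\to\langle Q\rangle\hookrightarrow\langle P\rangle\xr{j_P}2^{\mathcal{P}}.\]
The middle arrow is a poset embedding because, by the definition of subpreposet inherited from \cite[Chapter 2]{M1}, the partial order of $\langle Q\rangle$ is the restriction of the partial order of $\langle P\rangle$ to $\mathcal{Q}$; the last arrow is a poset embedding by \cite[Lemma \ref{comb:2.6}]{M1}. Hence the preposet version of Theorem \ref{isometry} (stated right after its proof) gives that the generalized realization $|Q|_j$ is isometric to the standard $|Q|$.

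Second I would verify that $|Q|_j\incl|P|$. Every chain $C$ of $Q$ is automatically a chain of $P$, and on $C$ the map $j$ literally coincides with $j_P|_C$; so the convex hull of $j(C)$ equals the convex hull of $j_P(C)$, which lies in $|P|$. Uniting over all chains of $Q$ yields $|Q|_j\incl|P|$. Composing the isometry $|Q|\cong|Q|_j$ from the previous step with this inclusion produces the desired isometric embedding; naturality in $Q$ is inherited from the evident naturality of the restriction operation on $j_P$.

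The substantive step is the first one: verifying that $\langle Q\rangle\hookrightarrow\langle P\rangle$ is genuinely an order embedding of posets. This is where the precise meaning of ``subpreposet'' enters, and it is exactly what the definition is designed to guarantee; once this factorization is in hand, the rest of the proof is merely a direct reading-off of Theorem \ref{isometry}.
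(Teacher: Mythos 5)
Your proof is correct and follows essentially the same route as the paper: restrict $j_P$ to $Q$, observe that the resulting injection factors through an embedding $\langle Q\rangle \hookrightarrow \langle P\rangle \to 2^{\mathcal P}$ (relying, as the paper does, on the fact that the transitive closure of a subpreposet is a subposet of the transitive closure), apply the preposet form of Theorem~\ref{isometry}, and then check that chains of $Q$ are chains of $P$ so the image lands in $|P|$. The only stylistic difference is that the paper treats the poset case first and then reduces the preposet case to it, whereas you invoke the preposet extension of Theorem~\ref{isometry} directly; the content is the same.
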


\begin{proof} If $P$ and $Q$ are posets, then by Theorem \ref{isometry},
$|Q|=|Q|_{j_Q}$ is isometric to $|Q|_j\incl |P|_{j_P}=|P|$, where $j$
is the composition $Q\incl P\xr{j_P}2^{\mathcal P}$.

In the general case, the transitive closure $\left<Q\right>$ is a subposet
of $\left<P\right>$, and it is easy to see that the image of the isometric
embedding $|Q|\incl|\left<Q\right>|\to|\left<P\right>|$ lies in $|P|$.
\end{proof}

\begin{corollary}\label{dual-isometric}
If $P$ is a preposet, $|P^*|$ is isometric to $|P|$.
\end{corollary}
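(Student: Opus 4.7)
The plan is to apply Theorem \ref{isometry} (in the preposet form stated immediately after it) to $P^*$ via a non-standard injection whose image is literally the set-complement of the image of the standard embedding $j_P$ of $P$. Concretely, I would set $j\:P^*\to 2^{\mathcal P}$, $j(p)=\mathcal P\but\fll p\flr_{\left<P\right>}$.

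First, I would verify that $j$ factors through an embedding of the transitive closure $\left<P^*\right>$. Since transitive closure commutes with the opposite construction, $\left<P^*\right>=\left<P\right>^*$. The set-complement map on $2^{\mathcal P}$ is an order-reversing self-bijection, so its composition with the standard embedding $j_{\left<P\right>}\:\left<P\right>\emb 2^{\mathcal P}$ is an honest embedding $\left<P^*\right>\emb 2^{\mathcal P}$, of which $j$ is the restriction to $P^*$. By Theorem \ref{isometry} this gives $|P^*|_j\cong|P^*|$ isometrically.

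Next, I would introduce the affine isometry $R\:[0,1]^{\mathcal P}\to[0,1]^{\mathcal P}$, $R(f)=\mathbf 1-f$. It sends each cube vertex $\chi_A$ to $\chi_{\mathcal P\but A}$, hence $R(\chi_{\fll p\flr_{\left<P\right>}})=\chi_{j(p)}$ for every $p\in P$. Chains of $P$ and chains of $P^*$ coincide as subsets of $\mathcal P$, and convex hulls commute with affine maps; thus for every finite chain $C$, the map $R$ restricts to a bijective isometry from the convex hull of $j_P(C)$ onto the convex hull of $j(C)$. Taking unions over all finite chains gives $R(|P|)=|P^*|_j$ as subsets of $[0,1]^{\mathcal P}$, and combining with the previous paragraph yields the desired isometry $|P|\cong|P^*|$.

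The sole nontrivial point is the verification that $j$ extends to an honest embedding of $\left<P^*\right>$ rather than a mere injection of $P^*$; this reduces to the identity $\left<P^*\right>=\left<P\right>^*$ together with the anti-isomorphism property of complementation on $2^{\mathcal P}$. Everything else is a routine unpacking of the definitions.
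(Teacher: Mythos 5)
Your proof is correct and essentially the same as the paper's: both compose one application of Theorem \ref{isometry} (to a complemented embedding) with the affine complementation isometry $f\mapsto\mathbf{1}-f$ of $[0,1]^{\mathcal P}$. The only cosmetic differences are the order in which the two isometries are chained and the way the preposet case is reduced to the poset case (you factor $j$ through an embedding of $\left<P^*\right>=\left<P\right>^*$ and invoke the remark following Theorem \ref{isometry}, whereas the paper passes to $\left<P\right>$ and tracks the image of each chain).
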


\begin{proof}
First assume that $P$ is a poset.
By Theorem \ref{isometry}, there exists an isometry $\Phi_j\:|P|\to |P|_j$,
where $j\:P\to 2^{\mathcal P}$ is defined by $j(p)=\mathcal P\but\cel p\cer$.
The isomorphism $\phi\:2^{\mathcal P}\to (2^{\mathcal P})^*$ defined by
$\phi(S)=\mathcal P\but S$ extends to a self-isometry $\Phi$ of
$[0,1]^{\mathcal P}$, taking $|P|_j$ onto $|P^*|$.
Indeed we have $\phi j(p)=j_{P^*}(p^*)$, where $j_{P^*}\:P^*\to 2^{\mathcal P}$ is
the standard embedding, $j_{P^*}(p^*)=\fll p^*\flr=\cel p\cer^*$.

If $P$ is a preposet, we apply the above construction to its transitive
closure $\left<P\right>$.
Given a finite chain of $P$, viewed as an embedding
$c\:[n]\to\left<P\right>\incl 2^{\mathcal P}$, we have
$|[n]|_c\incl |P|\incl |\left<P\right>|$.
Clearly, the isometry $|\left<P\right>|\to |\left<P\right>|_j\to|\left<P\right>^*|
=|\left<P^*\right>|$ takes $|[n]|_c$ onto $|[n]^*|_{c^*}$.
\end{proof}

\begin{remark}
We recall that $2^{\mathcal P}_w$ is identified with a closed subposet of
$(I^{\mathcal P}_c)^*$ (see \cite[\S\ref{comb:weak join}]{M1}).
The completed geometric realization $\overline{|I^{\mathcal P}_c|}$ therefore
contains an isometric copy of $\overline{|2^{\mathcal P}_w|}$.
The latter is in turn isometric to the completed {\it atomic} geometric
realization $\overline{|2^{\mathcal P}_w|^\bullet}=q_0$.
This isometry extends to an isometry between $\overline{|I^{\mathcal P}_c|}$ and
$Q_0\bydef([-1,1],0)^{(\mathcal P^+,\infty)}$.
\end{remark}

\begin{theorem}\label{3.1} Let $P$ and $Q$ be preposets.

(a) $|P\x Q|$ is uniformly homeomorphic to $|P|\x |Q|$.

(b) $|P+Q|$ is uniformly homeomorphic to $|P|*|Q|$

In particular, $|CP|$ and $|C^*P|$ are uniformly homeomorphic to $C|P|$.

(c) $|P*Q|$ is uniformly homeomorphic to $|P|*|Q|$.
\end{theorem}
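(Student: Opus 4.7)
The plan is to apply Theorem \ref{isometry} in each case: for each operation $\star\in\{\x,\prejoin,*\}$ I exhibit a natural order-embedding $j$ of the combined (pre)poset into $2^{\mathcal P\sqcup\mathcal Q}$, so Theorem \ref{isometry} identifies the standard realization $|P\star Q|$ isometrically with $|P\star Q|_j\incl[0,1]^{\mathcal P\sqcup\mathcal Q}=[0,1]^{\mathcal P}\x[0,1]^{\mathcal Q}$. The sup metric on $[0,1]^{\mathcal P\sqcup\mathcal Q}$ coincides with the $l_\infty$-product of the sup metrics on the two factor cubes, so the induced uniform structure is automatically the product or join uniformity, and it remains only to verify the set-theoretic equalities. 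For (a) take $j(p,q)=\fll p\flr\sqcup\fll q\flr$; this is an order-embedding because the product order on $P\x Q$ matches componentwise inclusion of cones. The inclusion $|P\x Q|_j\incl|P|\x|Q|$ is immediate from projection, since the $\mathcal P$- and $\mathcal Q$-coordinates of any chain in $P\x Q$ pass through chains in $P$ and $Q$ respectively (with harmless repetitions). The reverse inclusion is the classical staircase/shuffle step: writing $\alpha=\sum s_i\fll p_i\flr$ and $\beta=\sum r_j\fll q_j\flr$ with $\sum s_i=\sum r_j=1$, partition $[0,1]$ simultaneously into segments of widths $s_i$ and $r_j$; the common refinement has widths $t_{ij}$ supported on a staircase of indices which forms a chain in $P\x Q$, and $\sum_{ij} t_{ij}(\fll p_i\flr\sqcup\fll q_j\flr)$ realizes $(\alpha,\beta)$.

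For (b) I use the standard embedding $j_{P\prejoin Q}$ sending $p\in P$ to $\fll p\flr_P$ and $q\in Q$ to $\mathcal P\sqcup\fll q\flr_Q$. Every chain in $P\prejoin Q$ splits uniquely as $p_1<\dots<p_m<q_1<\dots<q_n$ (either part possibly empty), so $|P\prejoin Q|_j$ consists exactly of the points with $\mathcal P$-coordinates $(1-t)\alpha+t\chi_{\mathcal P}$ and $\mathcal Q$-coordinates $t\beta$, parametrised by $\alpha\in|P|$, $\beta\in|Q|$, $t\in[0,1]$, with $\alpha$ irrelevant when $t=1$ and $\beta$ irrelevant when $t=0$. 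This is the canonical parametrisation matching the definition of $|P|*|Q|$ from \cite{M2}, and the sup metric restricted to this subset is the uniform join metric. The cone cases $|CP|$ and $|C^*P|$ follow by specialising $Q$ (respectively $P$) to a singleton; for $C^*P$ one may alternatively use the dual embedding of Corollary \ref{dual-isometric}. Part (c) reduces to (b): the preposet $P\prejoin Q$ and the poset $P*Q$ share the same transitive closure (or, if in \cite{M1}'s convention $P*Q$ refines $P\prejoin Q$ by extra ``interior'' elements, the difference is killed by the simplicial identity $\Delta^m*\Delta^n=\Delta^{m+n+1}$ applied chain-by-chain), so by the preposet-realization convention $P\prejoin Q$ and $P*Q$ produce the same subset of $[0,1]^{\mathcal P\sqcup\mathcal Q}$.

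The main obstacle is the identification in (b) of the sup metric on the explicit $j$-realization with the uniform join metric defined in \cite{M2}: this should be essentially built into that construction, but it requires careful checking of the identifications at $t=0$ and $t=1$, and for preposets $P,Q$ one must track the passage to transitive closures in the definitions of $\fll\cdot\flr$. The shuffle step in (a) is classical but merits minor bookkeeping when some of the weights $s_i$ or $r_j$ vanish, so that the degenerate staircase still gives a legitimate chain of $P\x Q$ rather than a multi-set.
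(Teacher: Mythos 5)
Parts (a) and (b) are on the right track and in fact follow essentially the same route as the paper. In (a) you use the same embedding $P\x Q\emb 2^{\mathcal P}\x 2^{\mathcal Q}\simeq 2^{\mathcal P\sqcup\mathcal Q}$ as the paper; the shuffle/staircase argument is a standard explicit way of carrying out what the paper phrases tersely (``a chain in $2^{\mathcal P}\x 2^{\mathcal Q}$ lies in $P\x Q$ iff it projects onto chains in $P$ and $Q$''), so this is fine modulo the bookkeeping you already flag. In (b) your embedding $j_{P+Q}$ into $2^{\mathcal P\sqcup\mathcal Q}$ differs slightly from the paper's, which maps into $2^{\mathcal P\sqcup pt\sqcup\mathcal Q}$, adding an extra coordinate that records the join parameter $t$ directly; your version recovers $t$ from the maximum of the $\mathcal Q$-coordinates, which works since every point of $|Q|$ has a coordinate equal to $1$. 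Both versions then need to identify the resulting subset of the cube as a model for the abstract join $|P|*|Q|$; the paper does this by observing that the image is the independent rectilinear join and invoking \cite[Theorem \ref{metr:two joins}]{M2}, and this is exactly the step you acknowledge leaving open. So for (a) and (b) you have essentially the paper's proof, with (b) requiring you to actually spell out the appeal to \cite{M2} to close the gap you identify.

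Part (c) contains a genuine error. You assert that $P\prejoin Q$ and $P*Q$ have the same transitive closure, or at worst differ by ``interior'' elements whose effect is killed chain-by-chain, so that they ``produce the same subset of $[0,1]^{\mathcal P\sqcup\mathcal Q}$''. Neither alternative is correct under the conventions of \cite{M1} used here: $P*Q$ is defined as the subpreposet $C^*P\x Q\cup P\x C^*Q$ of $C^*P\x C^*Q$, so its underlying set is a set of \emph{pairs} sitting inside $(\{\hat 0\}\sqcup\mathcal P)\x(\{\hat 0\}\sqcup\mathcal Q)$, not a subset of $\mathcal P\sqcup\mathcal Q$. For example, with $P=\{p\}$ and $Q=\{q\}$, $P+Q$ is a two-element chain while $P*Q$ is a three-element poset with two minimal elements, and its standard realization lives in a different cube entirely. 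The two realizations are indeed uniformly homeomorphic, but that is precisely the content of part (c) and cannot be obtained by identifying the two preposets or their realizations set-theoretically. The paper's proof of (c) is a genuinely separate argument: it realizes $|P*Q|$ as the subspace $c|P|\x|Q|\cup|P|\x c|Q|$ of $c|P|\x c|Q|$ (using parts (a) and (b)), and then chains together three lemmas from \cite{M2} (on amalgams, rectilinear cones, and joins) to obtain a uniform homeomorphism to $|P|*|Q|$. You should supply a comparable argument; the reduction to (b) as stated does not go through.
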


\begin{proof}[Proof. (a)] The injections $P\to 2^{\mathcal P}$ and
$Q\to 2^{\mathcal Q}$ as in \cite[Lemma \ref{comb:2.6}]{M1}
yield $|P|\incl [0,1]^{\mathcal P}$ and
$|Q|\incl [0,1]^{\mathcal Q}$, where $P=(\mathcal P,\preceq)$ and
$Q=(\mathcal Q,\le)$.
The injection $P\x Q\to 2^{\mathcal P}\x 2^{\mathcal Q}\simeq
2^{\mathcal P\sqcup\mathcal Q}$ yields
$|P\x Q|\incl [0,1]^{\mathcal P\sqcup\mathcal Q}$.
Meanwhile, $|P|\x|Q|$ lies in $[0,1]^{\mathcal P}\x[0,1]^{\mathcal Q}$, which
may be identified with $[0,1]^{\mathcal P\sqcup\mathcal Q}$.
To see that $|P\x Q|=|P|\x|Q|$ under this identification, it suffices to
consider the case where $P$ and $Q$ are nonempty finite totally ordered sets.
This case (and the more general case where $P$ and $Q$ are finite) follows using
that a chain in $2^{\mathcal P}\x 2^{\mathcal Q}=2^{\mathcal P\sqcup\mathcal Q}$
lies in $P\x Q$ if and only if it projects onto a chain in $P$ and onto a chain
in $Q$.
\end{proof}

\begin{proof}[(b)] Consider the injection
$P+Q\to 2^{\mathcal P\sqcup pt\sqcup\mathcal Q}$ defined by
$\sigma\mapsto\fll\sigma\flr$ if $\sigma\in P$, and by
$\sigma\mapsto\fll\sigma\flr\cup pt$ if $\sigma\in Q$.
This yields $|P+Q|\incl [0,1]^{\mathcal P\sqcup pt\sqcup\mathcal Q}$, so that
$|P|$ lies in $[0,1]^{\mathcal P}\x\{0\}\x\{0\}$ and $|Q|$ in
$\{1\}\x\{1\}\x[0,1]^{\mathcal Q}$.
It is easy to see that $|P+Q|$ is the union of $|P|$, $|Q|$ and all straight
line segments with one endpoint in $|P|$ and another in $|Q|$.
(Beware that these segments alone cover $|P+Q|$ only if both $P$ and $Q$ are
nonempty.)
Thus $|P+Q|$ is the independent rectilinear join of $|P|$ and $|Q|$, as defined
in \cite[\S\ref{metr:join, etc}]{M2}.
Hence by \cite[Theorem \ref{metr:two joins}]{M2}, $|P+Q|$ is uniformly
homeomorphic to $|P|*|Q|$.
\end{proof}

\begin{proof}[(c)] From definition, $P*Q$ is the subpreposet
$C^*P\x Q\cup P\x C^*Q$ of $C^*P\x C^*Q$.
By the proof of part (b), $|C^*P|$ is the rectilinear cone $c|P|$, as defined in
\cite[\S\ref{metr:join, etc}]{M2}.
Then by part (a), $|C^*P\x C^*Q|$ is uniformly homeomorphic
to $c|P|\x c|Q|$; whereas $|P*Q|$ is uniformly homeomorphic to its subspace
$c|P|\x|Q|\cup |P|\x c|Q|$.
Write $|P|=X$ and $|Q|=Y$ for the sake of brevity.
Then, Lemmas \ref{metr:join-amalgam}, \ref{metr:rectilinear cone} and
\ref{metr:join} in \cite{M2} yield uniform homeomorphisms
$$cX\x Y\cup X\x cY\to cX\x Y\underset{X\x Y}\cup X\x cY\to
CX\x Y\underset{X\x Y}\cup X\x CY\to X*Y,$$
where each of the amalgamated unions in the middle is defined as a pushout in
the category of uniform spaces (and so is endowed with the quotient uniformity).
\end{proof}

\section{Geometric realization via quotient}

Given a collection of preposets $P_\alpha=(\mathcal P_\alpha,\le)$, their
{\it disjoint union} $\bigsqcup_\alpha P_\alpha$ is their coproduct in
the category of preposets; more explicitly, it is the preposet
$(\bigsqcup_\alpha\mathcal P_\alpha,\preceq)$, where $p_\alpha\in P_\alpha$
and $p_\beta\in P_\beta$ satisfy $p_\alpha\preceq p_\beta$ iff
$\alpha=\beta$ and $p_\alpha\le p_\beta$.
We note that disjoint union does not commute with geometric realization
unless the index set is finite, because every infinite disjoint union of
non-discrete uniform spaces is easily seen to be non-metrizable.

\begin{theorem} \label{CW} Let $P$ be a poset, and let $P_\sqcup$ be the disjoint
union of all nonempty finite chains of $P$.
Let $\pi\:P_\sqcup\to P$ be determined by the inclusions $C\incl P$, where
$C\in P^\flat$.
Then

(a) $|\pi|\:|P_\sqcup|\to |P|$ is a quotient map (in the category of uniform
spaces);

(b) if $d$ is the standard metric on $|P_\sqcup|$, then
$$d_\infty(x,y)=\inf_{n\in\N}
\inf_{\ \ \substack{x_1,\dots,x_{n-1}\in |P|\\ (x_0\bydef x,\ x_n\bydef y)}\ \ }
\sum_{i=0}^{n-1} d(\pi^{-1}(x_i),\pi^{-1}(x_{i+1}))$$ is a metric on $|P|$.
\end{theorem}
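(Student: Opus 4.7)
The plan is to prove (b) first, since its key input — that $|\pi|$ is $1$-Lipschitz — also drives the proof of (a).

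For (b): The function $d_\infty$ is symmetric and vanishes on the diagonal (via the trivial chain $x_0=x_1=x$); the triangle inequality follows by concatenating admissible chains. The only substantive point is positivity, which I reduce to the inequality $d\le d_\infty$ on $|P|$. Here $|\pi|$ is $1$-Lipschitz: by Corollary~\ref{subposet realization} it restricts to an isometric embedding on each simplex $|C|\incl|P_\sqcup|$, while distinct simplices $|C|,|C'|\incl|P_\sqcup|$ sit at mutual $l_\infty$-distance exactly $1$ (each point of a simplex has coordinate $1$ at some index from its component's coordinate set, which is disjoint from any other component's). Consequently $d(x_i,x_{i+1})\le d(\pi^{-1}(x_i),\pi^{-1}(x_{i+1}))$ for each link; summing and applying the triangle inequality for $d$ gives $d(x,y)\le d_\infty(x,y)$, whence positivity.

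For (a) I verify the universal property of the quotient in the category of uniform spaces: for any uniform space $Y$, a map $g\:|P|\to Y$ is u.c.\ with respect to the standard uniformity iff $g\circ|\pi|$ is u.c. The forward implication is the Lipschitz bound above. For the converse, fix an entourage $V$ of $Y$ and choose $V'$ with $V'\circ V'\incl V$; by u.c.\ of $g\circ|\pi|$ pick $\delta>0$ so that $d(\tilde x,\tilde y)<\delta$ in $|P_\sqcup|$ implies $(g|\pi|(\tilde x),g|\pi|(\tilde y))\in V'$. It then suffices to establish the geometric claim: there is $\delta'>0$ such that every $x,y\in|P|$ with $d(x,y)<\delta'$ admits an intermediate $z\in|P|$ satisfying $d(\pi^{-1}(x),\pi^{-1}(z)),d(\pi^{-1}(z),\pi^{-1}(y))<\delta$; for then $(g(x),g(z)),(g(z),g(y))\in V'$, giving $(g(x),g(y))\in V'\circ V'\incl V$.

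The geometric claim follows from carrier-chain analysis. For $x\in|P|$ let $C_x$ be the minimal chain of $P$ whose convex hull contains $x$, and set $c_x=\max C_x$. Closeness of $x,y$ in the $l_\infty$ metric (Theorem~\ref{isometry}) forces coordinate-wise agreement within $d(x,y)$; in particular the coordinates of $x$ at positions in $\fll c_x\flr\setminus\fll c_y\flr$ are at most $d(x,y)$, and symmetrically for $y$. Thus the effective supports of both $x$ and $y$ lie in $\fll c_x\flr\cap\fll c_y\flr$. When this intersection has the form $\fll c^*\flr$ for some $c^*\in P$ — as it does whenever $P$ is conditionally complete (the case of primary interest), and in general after a bounded sequence of preliminary steps via the combinatorics of \cite{M1} — the point $z=\fll c^*\flr\in|P|$ provides the desired two-hop resolution, using the skew-simplex geometry to bound each hop by a quantity tending to $0$ with $\delta'$. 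The main obstacle is this last combinatorial step: guaranteeing that $c^*$ exists, or is reached by a uniformly bounded intermediate sequence, in a possibly non-complete poset.
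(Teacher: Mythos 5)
Your argument for (b), and the framework for (a) via the universal property, are sound as far as they go: the $1$-Lipschitz observation (isometry on each component, mutual distance $1$ between distinct components) correctly gives $d\le d_\infty$, and the entourage-chasing reduces (a) to a geometric statement about short chains in $|P|$. The paper instead invokes the metrization lemma from \cite{M2} for quotient maps of finite type, which reduces both (a) and (b) simultaneously to showing that the identity $(|P|,d)\to(|P|,d_3)$ is uniformly continuous; the two reductions are broadly parallel.

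The genuine gap is the one you yourself flag, and it is fatal as stated. You try to resolve nearby $x,y$ through a single intermediate $z$ (a two-hop estimate), taking $z$ to be the vertex of some $c^*$ with $\fll c^*\flr=\fll c_x\flr\cap\fll c_y\flr$. There are two independent problems. First, as you note, this $c^*$ need not exist: in a general (non conditionally complete) poset the intersection of two principal cones is not a principal cone, and there is no ``bounded sequence of preliminary steps'' that repairs this --- the theorem is claimed for \emph{all} posets $P$, and the conditionally complete case is handled later by a different route (Theorem~\ref{Hahn}). Second, even when $c^*$ exists, the vertex $z$ need not share a chain with $x$ or with $y$: $c^*\le c_x$ gives comparability with $\max C_x$ only, not with the other elements of the carrier chain $C_x$, so $C_x\cup\{c^*\}$ need not be a chain and $d(\pi^{-1}(x),\pi^{-1}(z))$ has no reason to be small. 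The paper's proof avoids both difficulties by building an \emph{intermediary chain} $C$ (not a single element) that alternately interleaves segments of the two carrier chains $A$, $B$ at crossing indices $(k_i,l_i)$ borrowed from the proof of Theorem~\ref{isometry}; it then produces two auxiliary points $x'\in|A\cap C|$, $y'\in|B\cap C|$ with $d(x,x'),d(y,y')\le 2d(x,y)$, yielding the three-hop bound $d_3(x,y)\le 9\,d(x,y)$. That combinatorial construction of $C$ --- not a cone-intersection argument --- is the missing idea, and replacing your two-hop claim by such a three-hop one is essential.
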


Theorem \ref{CW}(a) implies that $|P|$ is a quotient space of $|P_\sqcup|$
(in the category of uniform spaces).
This is reminiscent of the definition of geometric realization of
semi-simplicial sets, and of the well-known characterization of the topology of
a CW-complex as the topology of a quotient (in the category of topological spaces!)
of the disjoint union of its cells.

Theorem \ref{CW}(b) is reminiscent of the definition of geometric polyhedral
complexes used in metric geometry and in geometric group theory (see \cite{BH},
\cite{BBI}).

\begin{proof} This is based on the technique of quotient maps of finite type
(see \cite[\S\ref{metr:metrizability}]{M2}) and on the proof of
Theorem \ref{isometry} above.

Write $q=|\pi|$, and let $d$ stand for the usual metric on $|P_\sqcup|$ and on
$P$.
Clearly, $q$ is surjective.
Given $x,y\in|P|$, let
$d_n(x,y)=\inf_{x=x_0,\dots,x_n=y}\sum d(q^{-1}(x_i),q^{-1}(x_{i+1}))$ and
$d_\infty(x,y)=\inf_{n\in\N} d_n(x,y)$.
It is easy to see that $d_\infty$ is a pseudo-metric on $|P|$ (while each $d_n$
need not satisfy the triangle axiom) and that the identity maps
$(|P|,d_n)\xr{\id}(|P|,d_\infty)\xr{\id}(|P|,d)$ are uniformly continuous for
each $n$.
If $(|P|,d)\xr{\id}(|P|,d_n)$ is uniformly continuous for some $n$, then
on the one hand, $d_\infty$ is uniformly equivalent to $d$, and on the other hand,
by \cite[Theorem \ref{metr:metrization lemma}(b)]{M2}, $d_\infty$ induces
the quotient uniformity on $|P|$.
Thus it suffices to show that $(|P|,d)\xr{\id}(|P|,d_3)$ is uniformly continuous.

Suppose that $P=(\mathcal P,\le)$, and let $\hat P\bydef C^*CP$ (with additional
elements $\hat 0$ and $\hat 1$).
The standard geometric realization $|P|\subset 2^{\mathcal P}$ lies in
the reduced geometric realization $|\hat P|'\subset 2^{\mathcal P\cup\{\hat 1\}}$
(where $\hat 0$ is identified with $\emptyset\in 2^{\mathcal P\cup\{\hat 1\}}$).

Pick some $x,y\in |P|$, and let $A\incl P$ and $B\incl P$ be any chains whose
convex hulls contain $x$ and $y$ respectively.
We have unique isomorphisms $a\:[m]\to\hat A$ and $b\:[n]\to\hat B$, where
$a(1)=\hat 0=b(1)$ and $a(m)=\hat 1=b(n)$.
Let $(k_1,l_1),\dots,(k_r,l_r)$ and $(k'_1,l'_1),\dots,(k'_{r'},l'_{r'})$, and
$Z$ and $Z'$ be as in the proof of Theorem \ref{isometry}.
We also recall the notation $\alpha_i=x(s)$ for some $s\in a(i)\but a(i-1)$,
and $\beta_j=y(t)$ for some $t\in b(j)\but b(j-1)$, where $2\le i\le m$ and
$2\le j\le n$.
Observe that this does not depend on the choices of $s$ and $t$.

We now define an `intermediary' chain $C\incl P$, viewed as an isomorphism
$c\:[q]\to\hat C$.
The inductive construction starts with $c(1)=\hat 0$, and in the event that
$c(i)=\hat 1$ it terminates with $q=i$.
Suppose that $c(i)=a(k)$ for some $k<m$; if $k\ne k_j$ for any $j$, then let
$c(i+1)=a(k+1)$; if $k=k_j$ for some $j\notin Z$, then let $c(i+1)=b(l_j)$.
Similarly, suppose that $c(i)=b(l)$ for some $l<n$; if $l\ne l'_j$ for any $j$,
then let $c(i+1)=b(k+1)$; if $l=l'_j$ for some $j\notin Z'$, then let
$c(i+1)=a(k'_j)$.
Finally, if $c(i)=a(k)=b(l)<\hat 1$, then we are free to set either $c(i+1)=a(k+1)$
or $c(i+1)=b(l+1)$.

Next we define an $x'\in |A'|_{a'}$ and a $y'\in |B'|_{b'}$, where $A'=A\cap C$
and $B'=B\cap C$, viewed as isomorphisms $a'\:[m']\to\hat A'$ and
$b'\:[n']\to\hat B'$.
Given an $s\in\mathcal P$, we have
$s\in a'(m')=\fll\hat 1\flr=\mathcal P\cup\{\hat 1\}$, and
$s\notin a'(1)=\fll\hat 0\flr=\emptyset$.
Hence $s\in a'(i)\but a'(i-1)$, where $2\le i\le m'$.
Pick some $h_i\in[m]$ so that $a'(i-1)\le a(h_i-1)$ and $a(h_i)\le a'(i)$.
We must be more specific for $i=1$ and $i=m$, and we set $h_2=2$ (which is
the least among all possible choices) and $h_{m'}=m$ (which is the greatest among
all possible choices).
Since $h_i\ge 2$, we may set $x'(s)=\alpha_{h_i}$.
Let $\alpha'_i=x'(s)$ for any $s\in a'(i)\but a'(i-1)$, where $2\le i\le m'$;
clearly this does not depend on the choice of $s$.
Thus $\alpha'_2=\alpha_2$ and $\alpha'_{m'}=m$.
Since $x\in|A|_a$, we have $\alpha_2=1$ and $\alpha_m=0$.
Therefore $\alpha'_2=1$ and $\alpha'_{m'}=0$, whence $x'\in|A'|_{a'}$.
We can similarly define a $y'\in |B'|_{b'}$ and consequently $\beta'_j$ where
$2\le j\le n'$.

Let us estimate $d(x,x')$ from above.
Suppose that $s\in a'(i)\but a'(i-1)$, where $2\le i\le m'$.
If both $a'(i)$ and $a'(i-1)$ belong to $\hat C$, then $x'(s)=x(s)$.
Else we have $a'(i-1)=a(k_j)$ and $a'(i)=a(k'_{j'})$, where $1\le j<r$ and
$1<j'\le r'$.
Moreover, by the construction of $C$ we have $l'_{j'-1}<l_j\le l'_{j'}<l_{j+1}$.
By definition, both $x(s)$ and $x'(s)$ belong to
$[\alpha_{k_j+1},\alpha_{k'_{j'}}]$.
Since $l'_{j'-1}+1\le l_{j+1}$, we have
$\alpha_{k'_{j'}}-\alpha_{k_j+1}\le\alpha_{k'_{j'}}-\beta_{l'_{j'-1}+1}+
\beta_{l_{j+1}}-\alpha_{k_j+1}$.
However $|\alpha_{k'_{j'}}-\beta_{l'_{j'-1}+1}|\le d(x,y)$ and
$|\beta_{l_{j+1}}-\alpha_{k_j+1}|\le d(x,y)$ by the proof of Theorem \ref{isometry}.
Thus $|x(s)-x'(s)|\le 2d(x,y)$.
We have proved that $d(x,x')\le 2d(x,y)$.

We have $d_3(x,y)\le d(x,x')+d(x',y')+d(y',y)$.
By the above, $d(x,x')\le 2d(x,y)$, and similarly $d(y',y)\le 2d(x,y)$.
By the triangle axiom, $d(x',y')\le d(x',x)+d(x,y)+d(y,y')\le 5d(x,y)$.
Hence $d_3(x,y)\le 9d(x,y)$.
Thus $(|P|,d)\to (|P|,d_3)$ is uniformly continuous.
\end{proof}

\begin{corollary}\label{canonical} If $P$ is a preposet, then $|P^\#|$ is uniformly
homeomorphic to $|P|$ by a homeomorphism $h\:|P^\#|\to|P|$.

Moreover, if $P$ is a poset, then $d_\infty(x,y)=2d_\infty(h(x),h(y))$
for all $x,y\in|P^\#|$.
\end{corollary}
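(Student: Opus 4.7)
\medskip

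The plan is to exhibit $h$ explicitly via a combinatorial correspondence, promote it to a uniform homeomorphism using the quotient description of Theorem \ref{CW}(a), and then deduce the factor $2$ in the path-metric relation from the explicit formula of Theorem \ref{CW}(b).

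\emph{Construction of $h$.} By the definition of the canonical subdivision from \cite{M1}, $P^\#$ comes equipped with a natural combinatorial correspondence with the subdivided cells of $|P|$: each vertex of $P^\#$ represents a chain of $\left<P\right>$, and each cell of $P^\#$ represents a flag of such chains. I would define $h$ on a vertex $C=\{p_1\prec\!\prec\dots\prec\!\prec p_k\}$ of $P^\#$ by sending it to the centroid $\frac{1}{k}\sum_{i}\chi_{\fll p_i\flr}\in|C|\subset|P|$, and extend affinely on each cell. That $h$ is a topological homeomorphism is then the classical subdivision fact: each point of $|P|$ has a unique expression as a convex combination of centroids indexed by a flag of subchains of the unique simplex it lies in the interior of.

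\emph{Uniform continuity via Theorem \ref{CW}(a).} Both $|P|$ and $|P^\#|$ are uniform quotients of the disjoint unions of their chain-realizations. The map $h$ admits a natural lift $\tilde h\:|(P^\#)_\sqcup|\to|P_\sqcup|$: on the realization of a chain of $P^\#$, i.e.\ of a flag $C_1\subsetneq\dots\subsetneq C_n$ of chains of $P$, send it affinely into the realization of $C_n$ in $|P_\sqcup|$ at the centroids $h(C_i)$. This lift is an affine map between simplices of bounded dimension (per chain), and so is Lipschitz on each simplex with a uniform constant; hence $\tilde h$ is uniformly continuous, and by \cite[Theorem \ref{metr:metrization lemma}(b)]{M2} (as used in the proof of Theorem \ref{CW}) it descends to uniform continuity of $h$. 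A dual lift in the reverse direction, sending each chain of $P$ to the (canonically chosen) flag of its initial segments, yields uniform continuity of $h^{-1}$.

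\emph{The factor $2$ when $P$ is a poset.} Apply the formula of Theorem \ref{CW}(b) on both sides. Each side expresses $d_\infty$ as an infimum of path sums $\sum_i d(\pi^{-1}(x_i),\pi^{-1}(x_{i+1}))$ in the respective chain-level disjoint union. The arithmetic engine is the following: on any chain $C\in P^\flat$, the standard $d$-metric on the simplex realizing $C$ in $|P_\sqcup|$ agrees, under the $h$-correspondence, with exactly half the $d$-metric on the refined cells of $|(P^\#)_\sqcup|$ covering $C$; this is because the canonical subdivision halves the $l_\infty$-diameter of each cell. Combined with the fact that the chain-flag structure of $P^\#$ mirrors the chain structure of $P$, one checks that $\tilde h$-preimages of paths in $|P|$ are paths in $|P^\#|$ with all summands doubled, and conversely. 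Passing to the infimum yields $d_\infty(x,y)=2d_\infty(h(x),h(y))$.

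\emph{Main obstacle.} The delicate step is the last one: establishing that the factor-of-$2$ scaling survives the infimum in Theorem \ref{CW}(b). The scaling is transparent simplex by simplex, but in principle the infimum on one side could be attained along paths that do not lift (or restrict) to optimal paths on the other side. The resolution is to show that one may restrict to paths whose intermediate points lie at the $h$-correspondence of $P^\#$-barycentric loci (mimicking the $d_3$-bound argument from the proof of Theorem \ref{CW}), so that every term in the path sum on one side matches a term on the other scaled by exactly $2$.
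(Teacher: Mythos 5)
You have confused the canonical subdivision $P^\#$ with the barycentric subdivision $P^\flat$. In the paper's conventions, $P^\#$ is the \emph{interval poset} of $P$: a vertex of $P^\#$ is an interval $[\sigma,\tau]$ (a pair $\sigma\le\tau$), and it is this cubical-type subdivision whose geometric realization halves $d_\infty$-distances. Your construction, sending a vertex ``$\{p_1\prec\!\prec\dots\prec\!\prec p_k\}$'' to the centroid $\frac1k\sum_i\chi_{\fll p_i\flr}$ and invoking ``the classical subdivision fact'' about unique convex combinations of centroids indexed by flags, is a description of the order-complex subdivision $P^\flat$, not $P^\#$. The paper's map sends $[\sigma,\sigma]$ to the vertex $\sigma$ and $[\sigma,\tau]$ (with $\sigma<\tau$) to the \emph{midpoint of the segment} joining the vertices $\sigma$ and $\tau$; for $k>2$ your centroid formula simply does not arise.

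This is not a cosmetic slip, because the factor $2$ in the second assertion is exactly what distinguishes the cubical subdivision from the barycentric one. As the paper emphasizes in the introduction, barycentric subdivision does \emph{not} scale the $l_\infty$ metric uniformly — the distance from the barycenter of the standard skew $n$-simplex to the barycenter of a facet tends to $0$ as $n\to\infty$ — so if $P^\#$ were $P^\flat$, the asserted identity $d_\infty(x,y)=2\,d_\infty(h(x),h(y))$ would be false for $P=[n]$ with $n$ large. Once the correct map $h$ is in place, your high-level scaffolding is on target and matches the paper: verify bijectivity by reducing to the case $P=[n]$ (noting $f^{-1}(|D|_{j_P})\cong D^\#$ for any chain $D$ of $P$), then invoke the $d_\infty$-formula of Theorem~\ref{CW}(b) to reduce the factor-of-$2$ identity to points $x,y$ in $|D^\#|$ for a single chain $D$, i.e.\ again to $P=[n]$, where it is a finite direct computation. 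But the delicate ``does the infimum survive scaling'' concern you flag at the end is moot once the reduction to a single chain is made, since on a single chain the metric statement is an isometry up to the factor $2$, not an infimum over paths.
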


\begin{proof}
Consider the map of sets $f\:P^\#\to|P|=|P|_{j_P}$ defined by sending an element
$[\sigma,\sigma]\in P^\#$ into the vertex $|\{\sigma\}|_{j_P}$ of $|P|_{j_P}$,
and an element $[\sigma,\tau]\in P^\#$ with $\sigma<\tau$ into the central point of
the straight line segment connecting the vertices $|\{\sigma\}|_{j_P}$ and
$|\{\tau\}|_{j_P}$.
A finite chain $C$ of $P^\#$ is of the form
$[\sigma_1,\tau_1]\Subset\dots\Subset [\sigma_n,\tau_n]$, where
$\sigma_1\le\dots\le\sigma_n\le\tau_n\le\dots\le\tau_1$.
(We recall that when $P$ is a poset, $\Subset$ is just the usual inclusion.)
By collapsing all the equality signs in the latter string of inequalities
we obtain a sting of strict inequalities, which represents a chain
$\tilde C$ of $P$ (of length $\ge n$).
Then $f(C)\incl|\tilde C|_{j_P}$; hence $f$ extends by linearity to a map
$h\:|P^\#|\to |P|$.

It is not hard to see that $h$ is a bijection (note that this is well-known in
the case where $P$ is a poset, cf.\ \cite{M1}).
Indeed, for every chain $D$ of $P$, $f^{-1}(|D|_{j_P})$ can be identified with
$D^\#$.
So the assertion reduces to the case $P=[n]$, which can be checked directly.

To show that $h$ is a uniform homeomorphism we may assume that $P$ is a
poset by considering the transitive closure.
Then it suffices to prove that $d_\infty(x,y)=2d_\infty(h(x),h(y))$ for all
$x,y\in|P^\#|$.
By the definition of the $d_\infty$ metrics (with respect to the usual metrics
$d$ on $P_\sqcup$ and $(P^\#)_\sqcup$; see the statement of Theorem \ref{CW}(b)),
it suffices to prove this when $x,y\in|D^\#|$ for some chain $D$ of $P$.
So the assertion again reduces to the case $P=[n]$, which can be checked directly.
\end{proof}

\begin{corollary} \label{CW2} Let $P$ be a poset, and let $P_\square$ be
the disjoint union of all intervals of $P$.
Let $\rho\:P_\square\to P$ be determined by the inclusions $Q\incl P$, where
$Q\in P^\#$.
Then $|\rho|\:|P_\square|\to |P|$ is a quotient map (in the category of uniform
spaces).
\end{corollary}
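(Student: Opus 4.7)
The plan is to deduce Corollary \ref{CW2} from Theorem \ref{CW}(a) by producing a monotone map $h\:P_\sqcup\to P_\square$ such that $\rho\circ h=\pi$. For every nonempty finite chain $C\in P^\flat$, let $[\min C,\max C]\in P^\#$ be the smallest interval of $P$ containing $C$. Define $h$ by sending the copy of $C$ inside $P_\sqcup$ into the copy of $[\min C,\max C]$ inside $P_\square$ via the inclusion $C\incl[\min C,\max C]$. This is manifestly monotone, and since both $\pi$ and $\rho\circ h$ act on each element of the copy of $C$ as the inclusion $C\incl P$, we have $\rho\circ h=\pi$.

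Applying the geometric realization functor produces uniformly continuous maps $|h|\:|P_\sqcup|\to|P_\square|$ and $|\rho|\:|P_\square|\to|P|$ satisfying $|\rho|\circ|h|=|\pi|$. By Theorem \ref{CW}(a), $|\pi|$ is a uniform quotient map; in particular $|\pi|$ is surjective, hence so is $|\rho|$.

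The proof is completed by invoking the general fact that if $g\circ f$ is a uniform quotient map and $g$ is uniformly continuous, then $g$ itself is a uniform quotient map. Indeed, for any cover $\U$ of the codomain of $g$, uniform continuity of $g$ gives that if $\U$ is uniform then so is $g^{-1}(\U)$; conversely, if $g^{-1}(\U)$ is uniform, then uniform continuity of $f$ shows that $f^{-1}(g^{-1}(\U))=(g\circ f)^{-1}(\U)$ is uniform, and the quotient property of $g\circ f$ then forces $\U$ to be uniform. Applied to $g=|\rho|$ and $f=|h|$, this yields the desired conclusion. No real obstacle is anticipated: beyond Theorem \ref{CW}(a) the argument is purely formal, relying only on this elementary stability property of uniform quotient maps under composition.
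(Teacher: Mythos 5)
Your proof is correct. Both you and the paper reduce the claim to Theorem \ref{CW}(a) plus the elementary fact that a second factor of a uniform quotient map is itself a uniform quotient map, but the reductions use different auxiliary maps. The paper introduces the poset $(P_\square)_\sqcup$ (disjoint union of chains of the disjoint union of intervals) and a commutative square
$$\begin{CD}
(P_\square)_\sqcup@>>>P_\square\\
@V\pi' VV@V\rho VV\\
P_\sqcup@>\pi>>P,
\end{CD}$$
observing that $\pi'$ is trivially a uniform quotient, composing it with the quotient $\pi$, and then descending along the top arrow. You instead construct a monotone section-like map $h\:P_\sqcup\to P_\square$ sending each chain $C$ into the interval $[\min C,\max C]$, so that $\rho\circ h=\pi$ on the nose, and then invoke the descent fact directly. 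Your route is a bit more economical: it avoids forming $(P_\square)_\sqcup$ and avoids the separate observation that $\pi'$ is a quotient as well as the fact that a composition of quotient maps is a quotient map, needing only the single ``if $g\circ f$ is a quotient then $g$ is'' lemma that the paper also uses elsewhere (in the proof of Theorem \ref{pullback-pushout}). The paper's version is perhaps slightly more symmetric in exhibiting $\rho$ on an equal footing with $\pi$ via a commuting square, but the mathematical content is the same and either argument is complete.
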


\begin{proof}
Consider the commutative square
$$\begin{CD}
(P_\square)_\sqcup@>>>P_\square\\
@V\pi' VV@V\rho VV\\
P_\sqcup@>\pi>>P.
\end{CD}$$
Here $\pi'$ is trivially a quotient map, and $\pi$ is a quotient map by
Theorem \ref{CW}(a).
Hence $\pi\pi'$ is a quotient map, and therefore so is $\rho$.
\end{proof}

\begin{remark}
Corollary \ref{CW2} is, in a sense, easier than Theorem \ref{CW}, for it can also
be proved as follows.
One first shows that $|r_X|$ (defined in \S\ref{NR} below) is uniformly
continuous without using Corollary \ref{CW2}; this can be done by writing
an explicit formula for $|r_X|$ in coordinates.
Next one observes that $|r_X^\#|\:|P^{\#\#}|\to|P^\#|$ takes any pair of
sufficiently close points onto a pair of points contained in
$|\fll q\flr|$ for the same interval $q\in P^\#$.
It then remains to use \cite[Theorem \ref{metr:metrization lemma}(b)]{M2} in the same way
as it is used in the proof of Theorem \ref{CW}.
\end{remark}

\begin{remark} \label{canonical'}
Corollary \ref{CW2} suffices to show that $h\:|P^\#|\to|P|$,
as defined in the proof of Corollary \ref{canonical}, is a uniform
homeomorphism.
Indeed, writing $P=(S,\le)$, the usual metric on $|P|$ is induced from that
on $|2^S|^\bullet$ via the usual embedding $P\emb 2^S$, and similarly the
usual metric on $|P^\#|$ is induced from that on $|(2^S)^\#|$ (see
\cite[Lemma \ref{comb:2.6}]{M1} and Corollary \ref{subposet realization}).
So it suffices to show that $h\:|(2^S)^\#|\to|2^S|^\bullet$ is a uniform
homeomorphism.
Let $Q=(2^S)^\#$.
By Corollary \ref{CW2}, $(|Q|,d)$ is uniformly homeomorphic to $(|Q|,d_\infty)$,
where $d$ denotes the usual metric on $|Q|$ and on $|Q_\square|$, and $d_\infty$
is as in the statement of Theorem \ref{CW}(b).
On the other hand, it is easy to see that $(|Q|,d_\infty)$ is isometric to
$(|2^S|^\bullet,2d)$, where $d$ is the usual $l_\infty$ metric on the cube
$|2^S|^\bullet$.
Hence $(|Q|,d)$ is uniformly homeomorphic to $(|2^S|^\bullet,d)$.
\end{remark}

\begin{remark} If $P$ is a simple poset (see \cite[\S\ref{comb:simple}]{M1}),
then it is easy to
see that the usual metrics $d$ on $|P_\sqcup|$ and on $|P_\square|$ lead to
the same $d_\infty$ metric on $|P|$.
So Theorem \ref{CW} for such $P$ can be recovered from Corollary \ref{CW2}.
\end{remark}

\begin{definition}[Geometric realization of a monotone map]
Given a monotone map $f\:P\to Q$ between posets, it extends uniquely to
a map $|f|\:|P|\to|Q|$ that is affine on every convex hull of a chain.
In fact, $|f|$ is clearly $1$-Lipschitz on every convex hull of a chain.
On the other hand, $f$ lifts uniquely to a monotone map
$f_\sqcup\:P_\sqcup\to Q_\sqcup$.
Then $|f_\sqcup|$ is $1$-Lipschitz (globally), and in particular,
uniformly continuous.
Since $q\:|Q_\sqcup|\to |Q|$ is uniformly continuous, so is the
composite arrow in the commutative diagram
$$\begin{CD}
|P_\sqcup|@>f_\sqcup>>|Q_\sqcup|\\
@VpVV@VqVV\\
|P|@>f>>|Q|.
\end{CD}$$
By Theorem \ref{CW}(a), $p\:|P_\sqcup|\to|P|$ is a quotient map; in other words,
the uniformity of $|P|$ is final with respect to $p$.
Hence $|f|$ is uniformly continuous.
We call it the {\it geometric realization} of $f$.

It is easy to see that geometric realization of posets and of monotone maps
determines a functor (also called the {\it geometric realization}) from
the category of posets and monotone maps to the category of metrizable uniform
spaces and uniformly continuous maps.
\end{definition}

\begin{theorem}\label{pullback-pushout} The geometric realization functor
preserves pullbacks, as well as those pushouts that remain such upon
barycentric subdivision.
\end{theorem}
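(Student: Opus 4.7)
The plan is to identify $|P\x_R Q|$ with $|P|\x_{|R|}|Q|$ inside $|P\x Q|\cong|P|\x|Q|$. For a cospan of monotone maps $f\:P\to R$, $g\:Q\to R$, the pullback in posets is the sub-preposet $P\x_R Q=\{(p,q)\in P\x Q\mid f(p)=g(q)\}$ of $P\x Q$. Theorem \ref{3.1}(a) identifies $|P\x Q|$ with $|P|\x|Q|$, and Corollary \ref{subposet realization} makes $|P\x_R Q|$ an isometric subspace of $|P\x Q|$; functoriality places this image inside $|P|\x_{|R|}|Q|$. For the reverse inclusion, I would pick $(x,y)$ with $|f|(x)=|g|(y)$, expand $x=\sum\lambda_i\chi_{\fll p_i\flr}$ and $y=\sum\lambda'_j\chi_{\fll q_j\flr}$ over carrier chains $p_1<\dots<p_n$ in $P$ and $q_1<\dots<q_{n'}$ in $Q$ with all $\lambda_i,\lambda'_j>0$, and use the equation $|f|(x)=|g|(y)$ to force the weakly increasing sequences $f(p_1)\le\dots\le f(p_n)$ and $g(q_1)\le\dots\le g(q_{n'})$ to collapse to a common carrier chain $r_1<\dots<r_m$ in $R$ with matching block sums $\mu_k\bydef\sum_{f(p_i)=r_k}\lambda_i=\sum_{g(q_j)=r_k}\lambda'_j$. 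Within each block, a standard shuffle of the two partitions of $[0,\mu_k]$ induced by the $\lambda$'s and the $\lambda'$'s produces a chain of pairs in $P\x_R Q$ together with coefficients that reassemble the block portion of $(x,y)$; concatenating across $k$ yields the required chain.

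\textbf{Pushout part.} Suppose a pushout square of posets with corners $R,P,Q,D$ remains a pushout upon applying $(-)^\#$. By Corollary \ref{canonical}, each of $|P|,|Q|,|R|,|D|$ is uniformly homeomorphic to its barycentric subdivision, so I may replace the square by its image under $(-)^\#$ and thereby assume that all four posets are chain posets of simplicial complexes and the maps are order-preserving in the rigid simplicial sense. In this case every nonempty finite chain of $D^\#$ is uniquely a chain of $P^\#$ or of $Q^\#$, identified along a common chain of $R^\#$, so $(D^\#)_\sqcup=(P^\#)_\sqcup\sqcup_{(R^\#)_\sqcup}(Q^\#)_\sqcup$ in posets. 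Taking realization, $|(D^\#)_\sqcup|$ is a disjoint union of compact simplices which is tautologically the uniform pushout of $|(P^\#)_\sqcup|$ and $|(Q^\#)_\sqcup|$ along $|(R^\#)_\sqcup|$. Finally Theorem \ref{CW}(a) equips each of $|D^\#|,|P^\#|,|Q^\#|,|R^\#|$ with the quotient uniformity from its chain disjoint union, and quotient uniformities commute with pushouts, so $|D^\#|$ is the uniform pushout of $|P^\#|$ and $|Q^\#|$ along $|R^\#|$.

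\textbf{Main obstacle.} The hardest ingredient is the pushout half, and within it the commutation of the quotient-by-finite-type construction with pushouts; this is controlled by the finite-type quotient machinery of \cite[\S\ref{metr:metrizability}]{M2}. The barycentric subdivision hypothesis is exactly what makes the chain decomposition of $D^\#$ rigid enough for the identity $(D^\#)_\sqcup=(P^\#)_\sqcup\sqcup_{(R^\#)_\sqcup}(Q^\#)_\sqcup$ to hold; without it, a pushout in posets can identify comparable elements across $P$ and $Q$ in ways that the chain-wise disjoint union cannot detect, and the formula fails outright.
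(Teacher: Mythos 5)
The pullback half of your argument is sound, and in fact you spell out a step that the paper leaves implicit: after reducing to products (Theorem \ref{3.1}(a)) and subposet inclusions (Corollary \ref{subposet realization}), one must still check that $|P\x_R Q|$ is exactly the equalizer subspace of $|P|\x|Q|$, and your shuffle-of-partitions argument does this cleanly. The paper's one-line reduction (``products, coproducts, embeddings, quotient maps'') silently assumes this, so your explicit verification is a genuine addition rather than a deviation.

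The pushout half has a gap, and it is in precisely the place you flagged as hardest. First, a terminological mismatch: the hypothesis of the theorem is about the barycentric subdivision $(-)^\flat$ (order complex of chains), not the canonical subdivision $(-)^\#$; the paper's proof spells this out as ``$f^\flat\:P^\flat\to Q^\flat$ is also a quotient map.'' More seriously, the identity $(D^\#)_\sqcup=(P^\#)_\sqcup\sqcup_{(R^\#)_\sqcup}(Q^\#)_\sqcup$ is not forced by $D^\#$ being a pushout: in a pushout of posets, transitivity can produce a comparable pair $c_1<c_2$ with $c_1\in P^\#\but R^\#$ and $c_2\in Q^\#\but R^\#$ (witnessed by an intermediate element of $R^\#$), and the two-element chain $\{c_1,c_2\}$ of $D^\#$ is then a chain of neither $P^\#$ nor $Q^\#$, so $(D^\#)_\sqcup$ has chains that the right-hand side cannot produce. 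Even granting that identity, the claim that $|(P^\#)_\sqcup|\sqcup_{|(R^\#)_\sqcup|}|(Q^\#)_\sqcup|$ is ``tautologically'' $|(D^\#)_\sqcup|$ is precisely a special instance of the theorem being proved (preservation of a pushout by $|\cdot|$) and is not automatic: the realization of a disjoint union of chains is \emph{not} the uniform coproduct of the realizations (the paper explicitly cautions that $|\cdot|$ does not commute with infinite disjoint union), so the quotient uniformity on the pushout still has to be matched against the metric on $|(D^\#)_\sqcup|$, which is the whole content of the finite-type quotient machinery. The paper's route avoids both problems: it reduces a pushout to a coproduct plus a coequalizer, and for the quotient map $f\:P\to Q$ it uses surjectivity of $f^\flat$ to build a monotone \emph{section} of $f_\sqcup\:P_\sqcup\to Q_\sqcup$, so $|f_\sqcup|$ is a uniform retraction and hence a quotient map; composing with the quotient map $|Q_\sqcup|\to|Q|$ from Theorem \ref{CW}(a) and reading off the commutative square then shows $|f|$ is a quotient map. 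You would need to replace your disjoint-union identity and the ``tautological'' step with an argument of this kind.
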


\begin{proof} The assertion is equivalent to the preservation of finite
products, finite coproducts (which always remain finite coproducts upon
barycentric subdivision), embeddings, and those quotient maps that remain
quotient maps upon barycentric subdivision.
Finite products were considered in Theorem \ref{3.1}(a) and embeddings in
Corollary \ref{subposet realization}.
The preservation of finite coproducts is obvious.

Finally, let $f\:P\to Q$ be a quotient map of posets such that
$f^\flat\:P^\flat\to Q^\flat$ is also a quotient map.
In particular%
\footnote{In fact, if the simplicial map $f^\flat$ is surjective, then $f^\flat$
and $f$ are quotient maps.}%
, $f^\flat$ is surjective, so every chain in $Q$ is the image of a chain in $P$.
Then $|f_\sqcup|\:|P_\sqcup|\to|Q_\sqcup|$ is a uniformly continuous retraction,
and therefore a quotient map.
By Theorem \ref{CW}(a), also $q\:|Q_\sqcup|\to|Q|$ is a quotient map.
Then the composite arrow in the preceding commutative diagram is a quotient map.
The assertion now follows from and the fact that if a composition $X\to Y\xr{f}Z$
is a quotient map, then so is $f$.
\end{proof}

Theorem \ref{pullback-pushout} yields an alternative proof of Theorem \ref{3.1}(b,c):

\begin{corollary}\label{join2} If $P$ and $Q$ are posets, then $|P*Q|$ and $|P+Q|$
are uniformly homeomorphic to $|P|*|Q|$.
\end{corollary}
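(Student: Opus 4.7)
My plan is to derive the corollary from Theorem~\ref{pullback-pushout} by exhibiting $P*Q$ as a pushout in the category of posets that remains a pushout upon barycentric subdivision. The assertion for $P+Q$ restates Theorem~\ref{3.1}(b), which is proved directly via the independent rectilinear join construction of~\cite{M2} and does not require the pushout machinery, so I would simply cite it.

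By definition $P*Q=(C^*P\x Q)\cup(P\x C^*Q)\incl C^*P\x C^*Q$, and elementary inspection shows $(C^*P\x Q)\cap(P\x C^*Q)=P\x Q$. This presents $P*Q$ as the pushout $(C^*P\x Q)\cup_{P\x Q}(P\x C^*Q)$ in the category of posets. The one step I expect to require genuine (though brief) thought is the verification that this pushout is preserved by~$\flat$. It suffices to show that every chain of $P*Q$ lies entirely in $C^*P\x Q$ or entirely in $P\x C^*Q$: an element of $C^*P\x Q$ outside $P\x Q$ has its first coordinate equal to the added top $\hat 1\in C^*P\but P$, whereas an element of $P\x C^*Q$ outside $P\x Q$ has its second coordinate equal to the added top of $C^*Q$. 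Any two such elements are incomparable in $P*Q$: comparing first coordinates forces one direction of inequality and comparing second coordinates forces the opposite.

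Once the $\flat$-invariance is in hand, Theorem~\ref{pullback-pushout} together with Theorem~\ref{3.1}(a) identifies $|P*Q|$ with the uniform pushout
\[
|C^*P|\x|Q|\cup_{|P|\x|Q|}|P|\x|C^*Q|.
\]
Using the cone case of Theorem~\ref{3.1}(b), i.e.\ that $|C^*P|$ is uniformly homeomorphic to $C|P|$ and likewise for $C^*Q$, this becomes $C|P|\x|Q|\cup_{|P|\x|Q|}|P|\x C|Q|$. The chain of uniform homeomorphisms supplied by Lemmas~\ref{metr:join-amalgam}, \ref{metr:rectilinear cone} and~\ref{metr:join} of~\cite{M2}, exactly as at the end of the proof of Theorem~\ref{3.1}(c), then identifies this with $|P|*|Q|$. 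Thus the main obstacle reduces to the brief combinatorial observation on incomparability of the two kinds of ``boundary'' elements; every subsequent step is assembly from previously established machinery.
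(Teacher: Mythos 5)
Your proof is correct, but it takes a genuinely different route from the paper's. The paper presents $P*Q$ as the pushout of $P\x Q\x I\supset P\x Q\x\partial I\to P\x\{\{0\}\}\sqcup Q\x\{\{1\}\}$ (where $I=\Delta^{\{0,1\}}$), which is a poset-level mirror of the topological pushout $X\x Y\x[0,1]\supset X\x Y\x\{0,1\}\to X\x\{0\}\sqcup Y\x\{1\}$ defining $X*Y$; similarly $P+Q$ is written as an analogous pushout over $[2]$. With the observation that $|I|$ and $|[2]|$ are both uniformly homeomorphic to $[0,1]$, Theorem~\ref{pullback-pushout} then finishes both cases in one stroke, with no appeal to Theorem~\ref{3.1}(b) or to the \cite{M2} lemmas. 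Your approach instead re-uses the amalgamated-union decomposition $(C^*P\x Q)\cup_{P\x Q}(P\x C^*Q)$ from the proof of Theorem~\ref{3.1}(c), verifies the $\flat$-invariance (the incomparability argument is the right check, since it shows the induced map from the disjoint union is surjective on chains), and then still has to invoke the cone case of Theorem~\ref{3.1}(b) plus Lemmas~\ref{metr:join-amalgam}, \ref{metr:rectilinear cone} and~\ref{metr:join} of~\cite{M2} to convert the resulting uniform pushout into $|P|*|Q|$. Also, citing Theorem~\ref{3.1}(b) for the $P+Q$ statement defeats the corollary's stated purpose of being an \emph{alternative} proof of Theorem~\ref{3.1}(b,c), though it is mathematically sound. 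The upshot is that the paper's pushout diagrams are chosen precisely so that no further processing (cone identifications, amalgam lemmas) is needed, whereas yours yields a correct but longer argument that largely reconstructs the proof of Theorem~\ref{3.1}(c).

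One small slip in the incomparability argument: $C^*P$ adjoins a \emph{least} element $\hat 0$, not a top $\hat 1$ (it is $CP$ that adjoins the top; see the proof of Theorem~\ref{isometry}). So the elements of $C^*P\x Q$ outside $P\x Q$ have first coordinate $\hat 0_P$, and those of $P\x C^*Q$ outside $P\x Q$ have second coordinate $\hat 0_Q$. The incomparability conclusion goes through unchanged with this correction, since $\hat 0_P<p$ for all $p\in P$ and $\hat 0_Q<q$ for all $q\in Q$ again force opposite inequalities in the two coordinates.
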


\begin{proof} $P*Q$ is the pushout of the diagram
$P\x Q\x I\supset P\x Q\x\partial I\to P\x\{\{0\}\}\sqcup Q\x\{\{1\}\}$,
where $I=\Delta^{\{0,1\}}$;
$P+Q$ is the pushout of the diagram
$P\x Q\x [2]\supset P\x Q\x(\{1\}\sqcup\{2\})\to P\x\{1\}\sqcup Q\x\{2\}$,
where $[2]=(\{1,2\},\le)$;
and $X*Y$ is the pushout of the diagram
$X\x Y\x [0,1]\supset X\x Y\x\{0,1\}\to P\x\{0\}\sqcup Q\x\{1\}$.
Since $|I|$ and $|[2]|$ are uniformly homeomorphic to $[0,1]$, the assertion
follows from Theorem \ref{pullback-pushout}.
\end{proof}

\section{Comparison with traditional geometric realization} \label{comparison}

In this section we compare our ``cubical'' uniform geometric realization $|K|$ with the traditional 
``rectilinear'' geometric realization $||K||$ of the simplicial complex $K$.
Let $V$ denote the vertex set of $K$, and for each $\sigma\in K$ let $\ddot\sigma\subset V$ denote the set 
of vertices of $\sigma$.
Both geometric realizations lie in the vector space $\R[V]$ of formal linear combinations of 
vertices of $K$:
\begin{itemize}
\item $||K||$ is the union of the convex hulls $\left<\ddot\sigma\right>$ of the vertices of
all simplexes $\sigma\in K$;
\item $|K|$ is the union of the convex hulls 
$\left<v_1,v_1+v_2,\dots,v_1+\dots+v_n\right>$ for all ordered collections $(v_1,\dots,v_n)$ of pairwise
distinct vertices $v_i\in\ddot\sigma$ of all simplexes $\sigma\in K$.
\end{itemize}
There are two natural ways to identify $|K|$ with $||K||$ as sets:
\begin{itemize}
\item the ``radial'' bijection $\rho\:|K|\to ||K||$, which moves each point along the linear 1-subspace 
(passing through the origin) that it generates; 
\item the ``radial-affine'' bijection $\alpha\:|K|\to ||K||$, which moves the barycenter of each simplex 
of $K$ in the same way and each simplex of $K^\flat$ by an affine map.
\end{itemize}
The radial bijection $\rho$ is simpler, but it is not piecewise linear.
However, it is good enough in that it sends every convex polytope contained in $|K|$ to a convex polytope 
contained in $||K||$; in particular, it does not destroy affine subdivisions. 
The radial-affine bijection $\alpha$ is an instance of the  ``pseudo-radial projection'' which is used 
to correct the so-called ``standard mistake'' in PL topology.
We will consider both bijections.

\begin{lemma} \label{rho-alpha}
Let $x=x_1v_1+\dots+x_nv_n\in|K|$ and $a=a_1v_1+\dots+a_nv_n\in||K||$, where each $v_i\in V$.

(a) If $\rho(x)=a$, then each $a_i=x_i\big/(x_1+\dots+x_n)$ and each $x_i=a_i/\max(a_1,\dots,a_n)$.

(b) If $\alpha(x)=a$ and $x_1\le\dots\le x_n$, then each 
$a_i=\frac{x_i}{n-i+1}-\sum_{j=1}^{i-1}\frac{x_j}{(n-j+1)(n-j)}$ and each $x_i=a_1+\dots+a_{i-1}+(n-i+1)a_i$.

(b$'$) If $\alpha(x)=a$, then $x_1\le\dots\le x_n$ is equivalent to $a_1\le\dots\le a_n$.
\end{lemma}

\begin{proof}[Proof. (a)]
Since $||K||$ lies in the hyperplane $\{a_1v_1+\dots+a_nv_n\mid a_1+\dots+a_n=1\}$, each 
$a_i=x_i\big/\sum_{j=1}^n x_j$.
Since $|K|$ lies in the corner $\{x_1v_1+\dots+x_nv_n\mid\max(x_1,\dots,x_n)=1\}$, each 
$x_i=a_i/\max_i a_i$.
\end{proof}

\begin{proof}[(b)] Let $w_i=v_i+\dots+v_n$.
Then $x$ lies in the convex hull $\left<w_1,\dots,w_n\right>$; indeed, we have
$x=x_1w_1+(x_2-x_1)w_2+\dots+(x_n-x_{n-1})w_n$ and $x_1+(x_2-x_1)+\dots+(x_n-x_{n-1})=x_n=1$.
Since $\alpha$ is affine on $\left<w_1,\dots,w_n\right>$,  
$\alpha(x)=x_1\alpha(w_1)+(x_2-x_1)\alpha(w_2)+\dots+(x_n-x_{n-1})\alpha(w_n)$, where
$\alpha(w_i)=\frac{w_i}{n-i+1}$.
Hence $\alpha(x)=\sum_{i=1}^n x_i\big(\frac{w_i}{n-i+1}-\frac{w_{i+1}}{n-i}\big)$, where $w_{n+1}=0$.
Therefore $\alpha(x)=\sum_{i=1}^n x_i\big(\frac{v_i}{n-i+1}-\frac{v_{i+1}+\dots+v_n}{(n-i+1)(n-i)}\big)$.
Then $\alpha(x)=\sum_{i=1}^n\big(\frac{x_i}{n-i+1}-\sum_{j=1}^{i-1}\frac{x_j}{(n-j+1)(n-j)}\big)v_i$.
Thus $a_i=\frac{x_i}{n-i+1}-\sum_{j=1}^{i-1}\frac{x_j}{(n-j+1)(n-j)}$.

From the latter equation $a_1=\frac{x_1}n$ and 
$a_{i+1}-a_i=\frac{x_{i+1}}{n-i}-\frac{x_i}{(n-i+1)(n-i)}-\frac{x_i}{n-i+1}=
\frac{x_{i+1}}{n-i}-\frac{x_i}{n-i}$ for $i=1,\dots,n-1$.
Thus we have $x_1=na_1$ and $x_{i+1}-x_i=(n-i)(a_{i+1}-a_i)$ for $i=1,\dots,n-1$.
This yields $x_i=a_1+\dots+a_{i-1}+(n-i+1)a_i$.
\end{proof}

\begin{proof}[(b$'$)] If $x_1\le\dots\le x_n$, then from the proof of (b) we have
$x_{i+1}-x_i=(n-i)(a_{i+1}-a_i)$ for $i=1,\dots,n-1$.
Hence $a_1\le\dots\le a_n$.

Moreover, if $x_1\le\dots\le x_n$ and some $x_i<x_{i+1}$, then by the same argument we get that $a_i<a_{i+1}$.
If $x_1\le\dots\le x_n$ and $x_j<x_k$, then $j<k$ and $x_i<x_{i+1}$, where $j\le i<k$; consequently 
$a_i<a_{i+1}$ and so $a_j<a_k$.
If $x_j<x_k$, then $x_{\pi(1)}\le\dots\le x_{\pi(n)}$ for some permutation $\pi$, and similarly to the above 
we get $a_j<a_k$.

Now if $a_1\le\dots\le a_n$ and it is not true that $x_1\le\dots\le x_n$, then $x_i<x_j$ for some
$i$ and $j$ such that $i>j$.
Then $a_i>a_j$, which contradicts our hypothesis.
\end{proof}

\begin{theorem} \label{Isbell's metric} 
Let $K$ be a simplicial complex with vertex set $V$.
Let $\R[V]$ be endowed with the $l_\infty$ metric.

(a) $\rho^{-1}$ is continuous. 
If $K$ is finite dimensional, then $\rho$ and $\rho^{-1}$ are Lipschitz.

(b) $\alpha$ is uniformly continuous. 
If $K$ is finite dimensional, then $\alpha^{-1}$ is Lipschitz.

(b$'$) $\alpha^{-1}$ is continuous.
\end{theorem}

\begin{proof}[Proof. (a)]
Let $a=\rho(x)$ and $b=\rho(y)$.
We have $x=\sum_{i=1}^nx_iv_i$ and $y=\sum_{i=1}^ny_iv_i$, also $a=\sum_{i=1}^na_iv_i$ and 
$b=\sum_{i=1}^nb_iv_i$ for some $n\in\N$ and some $v_i\in V$.

Let $s_x=x_1+\dots+x_n$ and $s_y=y_1+\dots+y_n$.
Then $s_x,s_y\in[1,n]$ and $|s_x-s_y|\le nd(x,y)$.
By Lemma \ref{rho-alpha}(a) $a_i-b_i=\frac{x_i}{s_x}-\frac{y_i}{s_y}=
\frac{x_i(s_y-s_x)+(x_i-y_i)s_x}{s_xs_y}=\frac{x_i(s_y-s_x)}{s_xs_y}+\frac{x_i-y_i}{s_y}$.
Then we have $|a_i-b_i|\le |s_y-s_x|+|x_i-y_i|$.
Hence $d(a,b)\le (n+1)d(x,y)$.

Let $m_a=\max(a_1,\dots,a_n)$ and $m_b=\max(b_1,\dots,b_n)$.
Then $m_a,m_b\in [\frac1n,1]$.
If $m_a=a_i$, then $m_b\ge b_i\ge a_i-d(a,b)=m_a-d(a,b)$, and similarly $m_a\ge m_b-d(a,b)$.
Hence $|m_a-m_b|\le d(a,b)$.
By Lemma \ref{rho-alpha}(a) $x_i-y_i=\frac{a_i}{m_a}-\frac{b_i}{m_b}=
\frac{a_i(m_b-m_a)+(a_i-b_i)m_a}{m_am_b}=\frac{a_i(m_b-m_a)}{m_am_b}+\frac{a_i-b_i}{m_b}$.
Then $|x_i-y_i|\le\frac{|m_b-m_a|}{1/n^2}+\frac{|a_i-b_i|}{1/n}$.
Hence $d(x,y)\le (n^2+n)d(a,b)$.

We also have $|x_i-y_i|\le\frac{|m_b-m_a|+|a_i-b_i|}{m_a(m_a-|m_a-m_b|)}$.
Hence $d(x,y)\le 2d(a,b)/m_a\big(m_a-d(a,b)\big)$.
In particular, if $d(a,b)\le\frac{m_a}2$, then $d(x,y)\le 4d(a,b)/m_a^2$.
Thus $\rho^{-1}$ is continuous.
\end{proof}

\begin{proof}[(b)]
Let us pick some $v_1,\dots,v_n\in V$.
Let $x,y\in\left<w_1,\dots,w_n\right>$, where $w_i=v_i+\dots+v_n$.
Then $x=\sum_{i=1}^nx_iv_i$ and $y=\sum_{i=1}^ny_iv_i$, where $x_1\le\dots\le x_n$ and $y_1\le\dots\le y_n$.
Let $a=\alpha(x)$ and $b=\alpha(y)$.
We have $a=\sum_{i=1}^na_iv_i$ and $b=\sum_{i=1}^nb_iv_i$.

By Lemma \ref{rho-alpha}(b) each
$a_i-b_i=\frac{x_i-y_i}{n-i+1}-\sum_{j=1}^{i-1}\frac{x_j-y_j}{(n-j+1)(n-j)}$.
Consequently we have $d(a,b)\le\max_i\big(\frac1{n-i+1}+\sum_{j=1}^{i-1}\frac1{(n-j+1)(n-j)}\big)d(x,y)$.
From $\frac1{k(k+1)}=\frac1k-\frac1{k+1}$ we obtain 
$\sum_{j=1}^{i-1}\frac1{(n-j+1)(n-j)}=\frac1{n-i+1}-\frac1n$.
Hence $d(a,b)\le\max_i\big(\frac2{n-i+1}-\frac1n)d(x,y)=(2-\frac1n)d(x,y)$.
So $d(a,b)\le 2d(x,y)$.
Thus $\alpha$ is 2-Lipschitz on $\left<w_1,\dots,w_n\right>$.

Let $D_K$ be the path metric on $|K|$ defined as in Theorem \ref{CW}(b), which extends the $l_\infty$
metric on each $\left<w_1,\dots,w_n\right>$, and let $D'_K$ be the similar path metric on $||K||$, 
extending the $l_\infty$ metric on each $\alpha\big(\!\left<w_1,\dots,w_n\right>\!\big)$.
Then $\alpha$ is 2-Lipschitz from $D_K$ to $D'_K$.
Clearly $\id\:||K||\to||K||$ is 1-Lipschitz from $D'_K$ to the ambient $l_\infty$ metric $d$.
Also, by Theorem \ref{CW}(b) $\id\:|K|\to |K|$ is uniformly continuous from $d$ to $D_K$.
Hence $\alpha$ is uniformly continuous from $d$ to $d$.

By Lemma \ref{rho-alpha}(b) each $x_i-y_i=(a_1-b_1)+\dots+(a_{i-1}-b_{i-1})+(n-i+1)(a_i-b_i)$.
Hence $d(x,y)\le\max_i\big((i-1)d(a,b)+(n-i+1)d(a,b)\big)=nd(a,b)$.
Thus $\alpha^{-1}$ is $n$-Lipschitz on $\alpha\big(\!\left<w_1,\dots,w_n\right>\!\big)$.

Let $k$ be the dimension of $K$.
Let $\hat K$ be the simplicial complex defined as the poset of all sets $\ddot\sigma\cup\ddot\tau$,
where $\sigma$ and $\tau$ run over $K$.
Then the dimension of $\hat K$ is at most $2k+1$, and $K$ is a subcomplex of $\hat K$ such that if 
$x,y\in||K||$, then $x,y\in||\fll\sigma\flr||\subset||\hat K||$ for some $\sigma\in\hat K$.
Since $x$ and $y$ are connected by a straight line segment in $||\fll\sigma\flr||$, we have
$d(x,y)=D'_{\hat K}(x,y)$.
By the above $\alpha^{-1}$ is $(2k+1)$-Lipschitz from $D'_{\hat K}$ to $D_{\hat K}$.
More precisely, we know this for $\alpha_{\hat K}^{-1}$; but $\alpha_{\hat K}|_K=\alpha_K$, so 
$\alpha_K^{-1}$ is also $(2k+1)$-Lipschitz from $D'_{\hat K}$ to $D_{\hat K}$.
Clearly, $\id\:|\hat K|\to|\hat K|$ is 1-Lipschitz from $D_{\hat K}$ to the ambient $l_\infty$ metric $d$.
Thus $\alpha^{-1}$ is $(2k+1)$-Lipschitz from $d$ to $d$.
\end{proof}

\begin{proof}[(b$'$)]
Let $a=\alpha(x)$.
We have $x=\sum_{i=1}^nx_iv_i$ and $a=\sum_{i=1}^na_iv_i$ for some $n\in\N$ and some $v_i\in V$.
We may assume that $a_1\le\dots\le a_n$.
Let $b=\alpha(y)$ be such that $d(a,b)<a_1/2$ and $d(a,b)<(a_{i+1}-a_i)/2$ for each $i=1,\dots,n-1$.
We have $y=\sum_{i=1}^ny_iv_i+\sum_{i=1}^mz_iw_i$ and $b=\sum_{i=1}^nb_iv_i+\sum_{i=1}^mc_iw_i$
for some $m\in\N$ and some $w_i\in V$.
Then $b_1\le\dots\le b_n$ and each $c_i<b_1$.
We may assume that $c_1\le\dots\le c_m$.

By Lemma \ref{rho-alpha}(b,b$'$) $x_i=a_1+\dots+a_{i-1}+(n-i+1)a_i$.
Similarly $z_j=c_1+\dots+c_{j-1}+(m+n-j+1)c_j$ and
$y_i=c_1+\dots+c_m+b_1+\dots+b_{i-1}+\big((m+n)-(m+i)+1\big)b_i$.
Since $c_j\le\dots\le c_m$, we have
$z_j\le c_1+\dots+c_m+nc_j\le (n+1)(c_1+\dots+c_m)\le (n+1)D(a,b)$, where $D$ is the $l_1$ metric.
Also, $|x_i-y_i|\le c_1+\dots+c_m+|a_1-b_1|+\dots+|a_{i-1}-b_{i-1}|+(n-i+1)|a_i-b_i|\le (n-i+1)D(a,b)$.
Hence $d(x,y)\le (n+1)D(a,b)$.
Since $D$ induces the same topology on $||K||$ as $d$ (see \cite{M3}*{\ref{book:4topologies}}),
we get that $\alpha^{-1}$ is continuous.
\end{proof}

\begin{corollary} \label{metric topology} $|K|$ is homeomorphic to $||K||$ via $\alpha$.
\end{corollary}

Thus as topological spaces, uniform polyhedra can be identified with polyhedra.

A different (shorter, but less explicit) proof of Corollary \ref{metric topology} can be extracted from 
\cite{M3}*{Theorem \ref{book:poset-realization} and Remark \ref{book:cubreal}(a)}.

\begin{corollary} \label{Isbell's metric2}
If $K$ is finite dimensional, then $|K|$ and $||K||$ are uniformly homeomorphic via both $\alpha$ and $\rho$.
\end{corollary}

Thus Isbell's finite-dimensional uniform polyhedra can be identified as 
a special case of our uniform polyhedra.

\begin{remark} It is easy to see that $\rho$ is discontinuous if $K$ is not locally finite dimensional.

However, $\rho$ is a homeomorphism between $||K||$ with the $l_\infty$ metric and
$|K|$ with the $l_1$ metric.
This similar to the proof of \ref{Isbell's metric}(a), using additionally that the $l_1$ and $l_\infty$ 
metrics induce the same topology on $||K||$ (see \cite{M3}*{\ref{book:4topologies}}).

Thus the $l_1$ and $l_\infty$ metrics on $|K|$ are related by the homeomorphism $\rho^{-1}\alpha$.
It is easy to see that they do not induce the same topology on $|K|$, unless $K$ is locally 
finite-dimensional.
\end{remark}

\begin{proposition} \label{traditional-qu}
Let $K$ be a simplicial complex with vertex set $V$ and let 
$||K||_\sqcup=\{(x,\sigma)\in ||K||\x K\mid x\in\left<\ddot\sigma\right>\}$ with metric $D$ defined by
$D\big((x,\sigma),(y,\sigma)\big)=d(x,y)$, where $d$ is the $l_\infty$ metric on $\R[V]$, and 
$D\big((x,\sigma),(y,\tau)\big)=1$ if $\sigma\ne\tau$.
Let $\sim$ be the equivalence relation on $||K||_\sqcup$ defined by $(x,\sigma)\sim(y,\tau)$ if $x=y$.
The the bijection $||K||_\sqcup/_\sim\to||K||$, $[(x,\sigma)]\mapsto x$, is a homeomorphism with respect to 
the topology of the quotient uniformity on $||K||_\sqcup/_\sim$ and the metric topology on $||K||$.
\end{proposition}

\begin{proof} Clearly, $\id_{||K||}$ is uniformly continuous from the quotient uniformity to $d$.
Conversely, by Theorem \ref{Isbell's metric}(b$'$) $\alpha^{-1}$ is continuous from $d$ to $d$;
by Theorem \ref{CW}(b) $\id_{|K|}$ is uniformly continuous from $d$ to the quotient uniformity;
and by the proof of Theorem \ref{Isbell's metric}(b) $\alpha$ is uniformly continuous from 
the quotient uniformity to the quotient uniformity.
So $\alpha\alpha^{-1}=\id_{||K||}$ is continuous from $d$ to the quotient uniformity.
\end{proof}

\begin{remark} Let us discuss bases of neighborhoods of points for $|K|$ and $||K||$.

$||K||$ is triangulated by the affine simplicial complex $K^\vartriangle$ whose simplexes 
are the convex hulls $||\sigma||=\left<\ddot\sigma\right>$ for all $\sigma\in K$.
For any $x\in ||K||$ let $\sigma_x$ be the smallest simplex of $K$ such that $x\in||\sigma||$.
Similarly, $|K|$ is cubulated by the affine cubical complex $K^\Box$ whose cubes are the convex hulls
$|q|=\left<\{\sigma^\cubvert\mid\sigma\in q\}\right>$, where $\sigma^\cubvert=\sum_{v\in\ddot\sigma}v$,
for all $q\in K^\#$ (that is, $q$ is an interval of $K$). 
For any $x\in |K|$ let $q_x$ be the smallest cube of $K^\#$ such that $x\in ||q||$.
For a cell $c$ of an affine cell complex $C$, its open star $\ost(c,C)$ is the union of the interiors of all 
cells of $C$ containing $c$.
A base of neighborhoods of any $x\in||K||$ in the metric topology is given by the the sets
$h_x^r\big(\ost(|\sigma_x|,K^\vartriangle)\big)$, where $h_x^r$ denotes the homothety with ratio $r\le 1$ 
centered at $x$ (see \cite{M3}*{Proof of \ref{book:4topologies}}).

Given an $x\in |K|$, we have $q_x=[\rho_x,\tau_x]$ for some $\rho_x,\tau_x\in K$.
Then the interior of $|q_x|$ consists of all $y=\sum_{v\in V} y_vv\in|K|$ such that 
$y_v=1$ for all $v\in\ddot\rho_x$, $0<y_v<1$ for all $v\in\ddot\tau_x\but\ddot\rho_x$ and $y_v=0$ for all 
$v\notin\ddot\tau_x$.
Consequently, $\ost(|q_x|,K^\Box)$ consists of all $y=\sum_{v\in V} y_vv\in|K|$ such that $y_v>0$ for all 
$v\in\ddot\rho_x$, $0<y_v<1$ for all $v\in\ddot\tau_x\but\ddot\rho_x$ and $y_v<1$ for all $v\notin\ddot\tau_x$.
Writing $x=\sum_{v\in V} x_vv$, let $r_0=\min\{x_v\mid v\in\ddot\tau_x\but\ddot\rho_x\}$ and 
$r_1=\min\{1-x_v\mid v\in\ddot\tau_x\but\ddot\rho_x\}$ if $\ddot\tau_x\but\ddot\rho_x\ne\emptyset$, and 
otherwise $r_0=r_1=\frac12$.
Then $r\bydef \min(r_0,r_1)\in (0,\frac12]$, and it is easy to see that $\ost(|q_x|,K^\Box)$ contains
$B_r(x)\cap |K|$, where $B_r(x)$ denotes the open ball of radius $r$ about $x$ in the $l_\infty$ metric.
Also, $\ost(|q_x|,K^\Box)$ is contained in $B_1(x)$.
Since the $l_\infty$ metric is induced by a norm, the image of $\ost(|q_x|,K^\Box)$ under the homothety 
$h_x^r$ with ratio $r\le 1$ centered at $x$ contains $B_{qr}(x)\cap|K|$ and is contained in $B_r(x)$.
Hence the subsets $h_x^r\big(\ost(|q_x|,K^\Box)\big)$ of $|K|$ form a base of neighborhoods of $x$.

Using this description it is not hard to see that the topology of $|K|$ is not induced by the 
product topology of $\R[V]$ if $K$ is not locally finite.
\end{remark}

\section{Second canonical neighborhood} \label{NR}

\begin{lemma}\label{canonical-MC}
Let $K$ be a poset.

(a) $|MC(r_K)|$ is uniformly homeomorphic to $|MC(\id_K)|$ by a homeomorphism that
is the identity on $K$ and extends the homeomorphism $|h(K)|\to|K^\#|\to|K|$
given by \ref{dual-isometric} and \ref{canonical}.

(b) $|MC^*(r_K)|$ is uniformly homeomorphic to $|MC(\id_{h(K)})|$ by a homeomorphism
that is the identity on $|h(K)|$ and extends the homeomorphism $|K|\to|K^\#|\to|h(K)|$
given by \ref{dual-isometric} and \ref{canonical}.
\end{lemma}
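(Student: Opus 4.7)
My plan is to realize both mapping cylinders as pushouts in the category of posets and then transport this description to geometric realizations via Theorem \ref{pullback-pushout}, after which the desired homeomorphism assembles itself from its pieces.

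First, following the constructions of $MC$ and $MC^*$ in \cite{M1}, I would express each of $MC(r_K)$, $MC(\id_K)$, $MC^*(r_K)$ and $MC(\id_{h(K)})$ as a pushout in the poset category. The two squares relevant to part (a) differ only at the ``top'' corner, which is $h(K)$ glued in via $r_K$ in the first square and $K$ glued in via $\id_K$ in the second; analogously in part (b). I would next verify that each of these pushouts remains a pushout upon barycentric subdivision, so that Theorem \ref{pullback-pushout} supplies uniform pushout descriptions of the geometric realizations. The required surjectivity of $r_K^\flat$ (per the footnote in the proof of Theorem \ref{pullback-pushout}) holds because $r_K$ is a combinatorial retraction: every chain of $K$ is the image of a chain of $h(K)$.

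With these uniform pushouts in hand, I would construct a morphism between the two diagrams in part (a) by: taking the identity on the $K$-corner; taking the homeomorphism $|h(K)|\to|K^\#|\to|K|$ supplied by Corollaries \ref{dual-isometric} and \ref{canonical} on the $h(K)$-corner; and taking the induced product map on the cylinder corner. The squares commute because $r_K$ is, by its very definition in \cite{M1}, a combinatorial avatar of the geometric homeomorphism $|h(K)|\to|K|$, so that $|r_K|$ coincides with the composite through $|K^\#|$. Since all three corner maps are uniform homeomorphisms, so is the induced map between pushouts; by construction this map restricts to the identity on $K$ and to the prescribed composite on $|h(K)|$. Part (b) is handled by the identical argument, with $MC$ replaced by $MC^*$ and the roles of source and target reversed so that the map $|K|\to|h(K)|$ appears on the target corner.

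The main obstacle I anticipate is the verification that the pushout defining the mapping cylinder is stable under $(-)^\flat$, together with the compatibility assertion that $|r_K|$ agrees on the nose with the canonical composite $|h(K)|\to|K^\#|\to|K|$. Both rely on the explicit combinatorial description of $r_K$, $h(K)$, $MC$ and $MC^*$ from \cite{M1}; once granted, everything else is a formal consequence of the pushout-preservation machinery of Theorem \ref{pullback-pushout} combined with the two corollaries above.
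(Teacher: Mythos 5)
Your approach has a critical gap at the claimed commutativity step, and I don't think it can be repaired within this framework.

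You realize $MC(r_K)$ and $MC(\id_K)$ as pushouts of the spans $K \xleftarrow{r_K} h(K) \hookrightarrow h(K)\times[2]$ and $K \xleftarrow{\id} K \hookrightarrow K\times[2]$, and you propose mapping the first diagram to the second by the identity on the $K$-corner, by the homeomorphism $|h(K)|\to|K^\#|\to|K|$ of Corollaries \ref{dual-isometric} and \ref{canonical} on the $h(K)$-corner, and by the induced product map on the cylinder corner. For the left-hand square of this diagram map to commute, the chosen homeomorphism $|h(K)|\to|K|$ would have to equal $|r_K|$. But $|r_K|$ is not a homeomorphism: $r_K$ is the map collapsing each handle onto its core, so $|r_K|$ is a retraction with nontrivial (contractible) point-inverses, whereas the composite $|h(K)|\to|K^\#|\to|K|$ is a genuine uniform homeomorphism. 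These two maps are homotopic but not equal, and the pushout argument needs equality (homotopy-commutativity does not produce a homeomorphism between the pushouts). This is the point where your proof breaks.

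There is a second, independent obstruction for part (b): the paper explicitly notes that $MC^*(r_K)$ is in general not a poset (since $r_K$ is generally not open), so Theorem \ref{pullback-pushout}, which is stated for posets, cannot be invoked directly on this pushout. What the paper actually does is avoid the pushout machinery altogether: it defines $f$ on the top and bottom copies as prescribed, extends affinely over the convex hull of each chain, and then verifies directly from the explicit description of chains of $MC(r_K)$ as $D+C$ (with $C$ a chain of $h(K)$, $D$ a chain of $K$, and $[\sigma_r,\tau_r]^*\le[\rho_1,\rho_s]^*$ or $\rho_s\le\sigma_1$ according to the variant) that $f$ carries each cell $|h(B)+A|$ onto $|B+A|$ via a join of a homeomorphism with an identity. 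The homeomorphism of the lemma is thus a genuinely nonlinear re-parametrization, not a product map, and no choice of pushout presentation realizes it as one.
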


Beware that $MC^*(r_K)$ is generally not a poset since $r_K$ is generally not open.
Note that $MC^*(r_K)\simeq (MC(r_K^*))^*$, where $r_K^*$ is the composition
$K^\#\xr{j_K}(K^*)^\#\xr{\#}K^*$.

\begin{proof}[(a)] We define $f\:|MC(r_K)|\to|MC(\id_K)|$ as required on the top
and bottom, and extend it linearly to the convex hull of every chain.
A chain of $MC(\id_K)=K\x [2]$ is of the form $B+A$, where
$A=(\alpha_1<\dots<\alpha_n)$ is a chain in the domain, and
$B=(\beta_1<\dots<\beta_m)$ is a chain in the range, with $\beta_m\le\alpha_1$.
From the similar description of chains of $MC(\id_{h(K)})$ we deduce that
a chain of $MC(r_K)$ is of the form $D+C$, where $C$ is a chain of $h(K)$ of
the form $[\sigma_1,\tau_1]^*<\dots<[\sigma_r,\tau_r]^*$ and $D$ is a chain of $K$
of the form $\rho_1<\dots<\rho_s$, with $\rho_s\le\sigma_1$.
It is easy to see that $f$ sends $|h(B)+A|$ onto $|B+A|$ via the join of
the uniform homeomorphism $|h(B)|\to|B|$ with $\id_{|A|}$.
It follows that $f$ is a uniform homeomorphism.
\end{proof}

\begin{proof}[(b)] We define $f\:|MC^*(r_K)\to |MC(\id_{h(K)})|$ as required on
the top and bottom, and extend it linearly to the convex hull of every chain.
A chain of $MC^*(\id_{h(K)})=h(K)\x [2]$ is of the form $A+B$, where
$A=(\alpha_1<\dots<\alpha_n)$ is a chain in the domain, and
$B=(\beta_1<\dots<\beta_m)$ is a chain in the range, with $\alpha_n\le\beta_1$.
It follows that a chain of $MC(r_K)$ is of the form $C+D$, where $C$ is a chain
of $h(K)$ of the form $[\sigma_1,\tau_1]^*<\dots<[\sigma_r,\tau_r]^*$ and $D$ is
a chain of $K$ of the form $\rho_1<\dots<\rho_s$, with
$[\sigma_r,\tau_r]^*\le[\rho_1,\rho_s]^*$.
It is easy to see that $f$ sends $|C+D|$ onto $|C+h(D)|$ via the join of $\id_{|C|}$
and the uniform homeomorphism $|D|\to|h(D)|$.
It follows that $f$ is a uniform homeomorphism.
\end{proof}

\begin{remark} \label{canonical-MC(c)}
Similarly to the proof of (a), $|MC(\id_{h(K)})|$ is uniformly homeomorphic to
$|MC(r_K)|$ by a homeomorphism that is the identity on $|h(K)|$ and extends
the homeomorphism $|h(K)|\to|K^\#|\to|K|$ given by \ref{dual-isometric} and
\ref{canonical}.
\end{remark}

\begin{theorem}\label{subcomplex-NR}
Let $K$ be a poset and $L$ a closed subposet of $K$.
Then $|L|$ is a uniform neighborhood retract of $|K|$.
\end{theorem}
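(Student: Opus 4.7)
The plan is to construct the neighborhood retract explicitly using coordinate functions on the embedding $|K|\incl[0,1]^{\mathcal P}$.

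Assuming the convention that a closed subposet is downward closed (an order ideal), define $\mu\:|K|\to[0,1]$ by $\mu(x)=\sup_{\tau\in\mathcal P\but L}x(\tau)$; this is $1$-Lipschitz as a sup of $1$-Lipschitz coordinate evaluations. If $x$ lies in the convex hull of a chain $\sigma_1<\dots<\sigma_n$ of $K$ with barycentric weights $\lambda_i$, and if by downward closure the chain splits as $\sigma_1,\dots,\sigma_m\in L$, $\sigma_{m+1},\dots,\sigma_n\notin L$, then the sup is attained at $\tau=\sigma_{m+1}$ --- because any $\tau\notin L$ with $\sigma_i\ge\tau$ forces $\sigma_i\notin L$ --- yielding $\mu(x)=\lambda_{m+1}+\dots+\lambda_n$. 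Consequently $\mu^{-1}(0)=|L|$, and $U\bydef\{x\in|K|:\mu(x)\le\tfrac12\}$ is a uniform neighborhood of $|L|$ containing the $\tfrac12$-ball around it.

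Define $r\:U\to|L|$ coordinate-wise by
$$r(x)(s)=\begin{cases}\dfrac{x(s)-\nu(s,x)}{1-\mu(x)}&\text{if }s\in L,\\0&\text{if }s\notin L,\end{cases}\qquad\nu(s,x)\bydef\sup_{\tau\in\mathcal P\but L,\,\tau\ge s}x(\tau).$$
An analogous chain computation gives $\nu(s,x)=\sum_{i>m,\,\sigma_i\ge s}\lambda_i$, hence $x(s)-\nu(s,x)=\sum_{i\le m,\,\sigma_i\ge s}\lambda_i$, and $r(x)$ coincides with the renormalized convex combination $\sum_{i\le m}(\lambda_i/\Lambda)j_L(\sigma_i)$ along the $L$-initial segment of the supporting chain, with $\Lambda=1-\mu(x)\ge\tfrac12$ on $U$; this lies in $|L|$. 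For $x\in|L|$, $\mu(x)=\nu(s,x)=0$ and $x(s)=0$ for $s\notin L$, so $r(x)=x$. Uniform continuity of $r$ on $U$ follows formally from the $1$-Lipschitz nature of $\mu$, of each $\nu(s,\cdot)$, and of coordinate evaluations, together with the uniform lower bound on the denominator.

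The main obstacle is checking that $r(x)$ lies in $|L|$ as a convex combination of a chain in $L$, not merely in the convex hull of $|L|$ inside the ambient cube $[0,1]^{\mathcal P}$. The crux is the identity $\nu(s,x)=\sum_{i>m,\,\sigma_i\ge s}\lambda_i$: the sup defining $\nu$ sees only the non-$L$ tail of the chain, so $x(s)-\nu(s,x)$ is intrinsically defined by $x$ and is consistent across the cells of $|K|$ in which $x$ may lie. Without downward closure, non-$L$ contributions could appear interspersed within the chain, the sup would not isolate them, and the formula for $r$ would break down.
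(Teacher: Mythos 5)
Your proof is correct, assuming (as you state explicitly) that a closed subposet means a downward-closed one (order ideal). That is indeed the convention in use: it matches the simplicial picture, where a subcomplex is exactly a downward-closed subposet of the face poset, and it is the hypothesis that makes the paper's construction of the star neighborhood $\fll h(L)\flr_{h(K)}$ a genuine enlargement of $h(L)$.

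Your route, however, is genuinely different from the paper's. The paper works combinatorially: it passes to the dual canonical subdivision $h(K)=(K^\#)^*$, uses the preposet-level dual mapping cylinder $MC^*(r_K)$ of the subdivision map, identifies the simplicial star neighborhood $|\fll h(L)\flr_{h(K)}|$ as the desired uniform neighborhood via Lemma~\ref{canonical-MC}, and produces the retraction as the geometric realization of a monotone map $N(L)\to L$ --- so uniform continuity comes for free from functoriality. Your proof, by contrast, is entirely analytic: you introduce the $1$-Lipschitz sup-functionals $\mu(x)=\sup_{\tau\notin\mathcal L}x(\tau)$ and $\nu(s,x)=\sup_{\tau\notin\mathcal L,\,\tau\ge s}x(\tau)$ in the $l_\infty$ embedding $|K|\subset[0,1]^{\mathcal P}$, take the neighborhood to be the sublevel set $\{\mu\le\tfrac12\}$, and define the retraction by a renormalization formula; the one substantive verification is the chain computation $x(s)-\nu(s,x)=\sum_{i\le m,\,\sigma_i\ge s}\lambda_i$ and $1-\mu(x)=\sum_{i\le m}\lambda_i$ showing that $r(x)$ really is a convex combination along a chain in $L$, and the Lipschitz estimate for the quotient (which works uniformly in $s$ because the numerator and denominator are Lipschitz with constants independent of $s$, and the denominator is bounded below by $\tfrac12$ on $U$). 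You correctly identify that downward closedness is what makes the sup see only the tail of the supporting chain.

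The tradeoff: your argument is shorter and self-contained, bypassing the $h(K)$ and mapping-cylinder machinery. The paper's proof buys extra structure --- the neighborhood $|\fll h(L)\flr_{h(K)}|$ is itself a polyhedron and the retraction is the realization of a monotone map --- and that structure is immediately reused (in the definition of the relative canonical subdivision $h(K,L)$, and in the proof of Proposition~\ref{tmc-retraction}), so the combinatorial route is the one that fits the paper's subsequent development.
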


\begin{proof}
Clearly $|\fll h(L)\flr_{h(K)}|$ is a uniform neighborhood of $|h(L)|$ in $|h(K)|$.
By \ref{dual-isometric} and \ref{canonical}, it corresponds to a uniform
neighborhood of $|L|$ in $|K|$.
On the other hand, let $\partial N(L)=\fll h(L)\flr\but h(L)\incl h(K)$, and let
$N(L)=L\cup MC^*(r_K|_{\partial N(L)})\incl MC^*(r_K)$.
The monotone map $MC^*(r_K)\to MC^*(\id_K)\to K$ is a retraction, and sends $N(L)$
into $L$; hence it restricts to a retraction $N(L)\to L$.
Thus $|L|$ is a uniform retract of $|N(L)|$.

Let $f$ be the composition of the uniform homeomorphism
$|MC^*(r_K)|\to |MC^*(\id_{h(K)})|$ in Lemma \ref{canonical-MC}(b), with
the geometric realization of the projection $\pi\:MC^*(\id_{h(K)})\to h(K)$.
Then $f$ restricts to the identity on $|\partial N(L)|$ and to
the uniform homeomorphism $|L|\to |h(L)|$ on $|L|$.
A chain of $MC^*(r_K|_{\partial N(L)})$ is of the form $C+D$, where $C$ is a chain
of $\partial N(L)\incl h(K)$ of the form
$[\sigma_1,\tau_1]^*<\dots<[\sigma_r,\tau_r]^*$ and $D$ is a chain of $L\incl K$ of
the form $\rho_1<\dots<\rho_s$, with $[\sigma_r,\tau_r]^*\le[\rho_1,\rho_s]^*$.
Then for every chain $E$ of $h(D)$, $C+E$ is a chain of $h(K)$.
Since $\partial N(L)$ is disjoint from $h(L)$, it follows that $f$ sends
$|C+D|$ homeomorphically onto $|C+h(D)|$ via the join of $\id_{|C|}$ and
the uniform homeomorphism $|D|\to|h(D)|$.
It follows that $f$ restricts to a uniform homeomorphism between $|N(L)|$ and
$|\fll h(L)\flr_{h(K)}|$.
Since it sends $|L|$ onto $|h(L)|$, we obtain that $|h(L)|$ is a uniform
retract of $|\fll h(L)\flr_{h(K)}|$.
\end{proof}

\begin{remark}
The proof of Theorem \ref{subcomplex-NR} breaks down if the poset $MC(r_K)$
is used instead of the preposet $MC^*(r_K)$.
Indeed, a chain of $MC(r_K|_{\partial N(L)})$ is of the form $D+C$, where $C$ is
a chain of $\partial N(L)\incl h(K)$ of the form
$[\sigma_1,\tau_1]^*<\dots<[\sigma_r,\tau_r]^*$ and $D$ is a chain of $L\incl K$
of the form $\rho_1<\dots<\rho_s$, with $\rho_s\le\sigma_1$.
Then for a chain $E$ of $h(D)$, $E+C$ is generally not a chain of $h(K)$.

We could still try to follow the argument and see what happens.
So let $N'(L)=L\cup MC(r_K|_{\partial N(L)})\incl MC(r_K)$ and let $f'$ be
the composition of the uniform homeomorphism $|MC(r_K)|\to |MC(\id_{h(K)})|$
in Remark \ref{canonical-MC(c)} with the geometric realization of the projection
$\pi\:MC(\id_{h(K)})\to h(K)$.
Suppose that we have two disjoint chains $D+C$ and $D'+C'$ in $N'(L)$, where
$C$ and $D$ are as above, and $C'$ is a chain of $\partial N(L)\incl h(K)$ of
the form $[\sigma'_1,\tau'_1]^*<\dots<[\sigma'_r,\tau'_r]^*$ and $D$ is a chain
of $L\incl K$ of the form $\rho'_1<\dots<\rho'_s$, where $\rho_s<\rho'_1$,
$\rho'_s<\sigma_1$ and $\tau_1<\sigma'_1$.
Then $f'(|D+C|)$ and $f'(|D'+C'|)$ may intersect; so $f'$ restricted
to $|N'(L)|$ is generally not injective.
\end{remark}

\begin{definition}[Relative canonical subdivisions]
Let $K$ be a poset and let $L$ be a closed subposet of $K$.
In the notation of the proof of Theorem \ref{subcomplex-NR}, let $h(K,L)$
denote $N(L)\cup (h(K)\but h(L))$.
Then by the proof of Theorem \ref{subcomplex-NR}, $|h(K,L)|$ is uniformly
homeomorphic to $|h(K)|$ and hence to $|K|$.

Dually, let $K^\#_L=MC^*(r_K^*)\but(\cel L\cer\cup\cel (K\but L)^\#\cer)$.
This contains $L^\#$ and $K\but L$, and $|K^\#_L|$ is uniformly
homeomorphic to $|K|$ similarly to the above (using part (a) of Lemma
\ref{canonical-MC}).
Note that $(P+Q)^\#\simeq (P^**Q)^\#_{P^**\emptyset\cup\emptyset*Q}$, which
yields an alternative proof that $|P+Q|$ is uniformly homeomorphic to $|P*Q|$.
\end{definition}

\begin{definition}[Homotopy completeness]
We recall from \cite{M2} that a uniform space is called {\it homotopy complete}
if there exists a homotopy $h_t\:\overline{|P|}\to\overline{|P|}$, where
$\overline{|P|}$ is the completion of $|P|$, with $h_0=\id$ and
$h_t(\overline{|P|})\incl |P|$ for $t>0$.
\end{definition}

\begin{lemma}\label{3.9} Let $P$ be a countable poset.
Then $|P|$ is homotopy complete.
\end{lemma}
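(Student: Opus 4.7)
The plan is to define the homotopy $h_t$ by a coordinate-wise shift-and-rescale: for $t\in[0,\tfrac12]$ and $x\in[0,1]^{\mathcal P}$, set
\[
h_t(x)(p)\;=\;\frac{\max(x(p)-t,\,0)}{1-t}.
\]
This formula defines a $\tfrac1{1-t}$-Lipschitz self-map of $[0,1]^{\mathcal P}$, and the pointwise derivative estimate $|\partial_t h_t(y)(p)|=(1-y(p))/(1-t)^2\le 4$ on $t\le\tfrac12$ shows that $(x,t)\mapsto h_t(x)$ is jointly uniformly continuous on $\overline{|P|}\times[0,\tfrac12]$. Since $h_0=\operatorname{id}$, once I verify that $h_t$ lands in $|P|$ for every $t>0$, I obtain the desired homotopy (after an optional affine reparametrization of time).

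First I check $h_t(|P|)\subseteq|P|$ by a direct computation. Write $x\in|P|$ as $\sum_{i=1}^n\lambda_i\chi_{\fll c_i\flr}$ for a chain $c_1<\dots<c_n$ in $P$, $\lambda_i>0$, and $\sum\lambda_i=1$; then with $\mu_i=\sum_{j\ge i}\lambda_j$, the function $x$ takes the strictly decreasing values $1=\mu_1>\mu_2>\dots>\mu_n>0$ on the successive levels $\fll c_i\flr\setminus\fll c_{i-1}\flr$. Set $N=\max\{i:\mu_i>t\}$; then $h_t(x)$ takes the value $(\mu_i-t)/(1-t)$ on level $i$ for $i\le N$ and vanishes elsewhere, so regrouping gives $h_t(x)=\sum_{i=1}^N\lambda_i'\chi_{\fll c_i\flr}$ with $\lambda_i'=(\mu_i-\mu_{i+1})/(1-t)$ for $i<N$ and $\lambda_N'=(\mu_N-t)/(1-t)$. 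One checks $\sum\lambda_i'=1$, so $h_t(x)\in|P|$.

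The heart of the argument is the extension to $\overline{|P|}$. Fix $t>0$, pick $x\in\overline{|P|}$, and choose $x^{(k)}\in|P|$ with $x^{(k)}\to x$. The set $F:=\{p\in\mathcal P:x(p)>t/2\}$ is finite: choose $K$ with $\sup_p|x^{(k)}(p)-x(p)|<t/4$ for $k\ge K$, so every $p\in F$ satisfies $x^{(K)}(p)>t/4>0$ and hence $F$ lies in the finite support of $x^{(K)}\in|P|$. The set $\widetilde F:=\{c\in\mathcal P:\fll c\flr\subseteq F\}$ is a downward closed and finite subposet of $P$, and by Corollary \ref{subposet realization} its realization $|\widetilde F|$ embeds isometrically in $|P|$ as exactly the subset of points whose support lies in $F$. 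Being a finite simplicial polyhedron, $|\widetilde F|$ is compact and therefore closed in $[0,1]^{\mathcal P}$. For $k\ge K$ one has $\operatorname{supp}(h_t(x^{(k)}))=\{p:x^{(k)}(p)>t\}\subseteq\{p:x(p)>3t/4\}\subseteq F$, so $h_t(x^{(k)})\in|\widetilde F|$; passing to the sup-metric limit yields $h_t(x)\in|\widetilde F|\subseteq|P|$.

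The main obstacle is precisely this last step: a priori $h_t(x)$ belongs only to $\overline{|P|}$, and the whole point is to exploit uniform convergence to bound the supports of the approximants $h_t(x^{(k)})$ by a single finite compact subcomplex of $|P|$ before taking the limit.
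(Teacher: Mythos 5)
The first half of your argument is fine, and the approach is genuinely different from the paper's (which passes to the canonical subdivision $P^\#$ and retracts a mapping cylinder). But the key claim --- that $F:=\{p\in\mathcal P:x(p)>t/2\}$ is finite --- is false for a general poset, and with it the final step collapses. You justify it by saying $F$ lies in ``the finite support of $x^{(K)}\in|P|$''; however a point $\sum_i\lambda_i\chi_{\fll c_i\flr}$ of $|P|$ is supported on the cone $\fll c_n\flr=\{q\in P:q\le c_n\}$, which is an arbitrary principal down-set of $P$ and need not be finite. Finiteness of all cones is exactly the cone-complex condition, whereas Lemma \ref{3.9} is asserted for arbitrary countable posets.

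Moreover it is the formula itself, not merely the argument, that fails once cones are infinite. Take $P=\Z$ with the usual order. Viewed as a function $\Z\to[0,1]$, a point of $|\Z|$ is non-increasing, takes finitely many values, equals $1$ on a ray $(-\infty,a]$ and $0$ on a ray $[b,\infty)$; the completion $\overline{|\Z|}$ consists of all non-increasing $x:\Z\to[0,1]$ with $x\to1$ at $-\infty$ and $x\to0$ at $+\infty$. The function $x(m)=1-2^{m-1}$ for $m\le0$, $x(m)=2^{-m-1}$ for $m\ge0$, lies in $\overline{|\Z|}\setminus|\Z|$, and a direct computation gives $h_{1/2}(x)(m)=1-2^m$ for $m\le-1$ and $h_{1/2}(x)(m)=0$ for $m\ge0$: this still takes infinitely many distinct values and never attains the value $1$, so $h_{1/2}(x)\notin|\Z|$. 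Thus your $h_t$ does not send $\overline{|P|}$ into $|P|$ for $t>0$, and a uniform shift-and-rescale cannot create the finitely-many-values structure of $|P|$ out of a point of $\overline{|P|}\setminus|P|$. (Your argument would go through verbatim for cone complexes; the paper's proof reduces the general case to a setting where finite support is built in, by replacing $P$ with $P^\#$ and working inside the atomic realization of $2^{\mathcal P}_w$.)
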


The proof is an extension of the proof that $q_{00}=|2^\N_w|^\bullet$ is
homotopy complete, which was given in \cite[Corollary \ref{metr:c_00}(b)]{M2}.

\begin{proof}[Proof] $|P|$ is uniformly homeomorphic to $|P^\#|$, and
$P^\#$ is atomic.
Hence we may assume without loss of generality that $P$ is atomic.

Let $R$ be the composition $MC(r_P)\to MC(\id_P)\to P$
(extending the map $r_P$), and let $H\:|MC(r_P)|\to |MC(\id_P)|$ be the uniform
homeomorphism of Lemma \ref{canonical-MC}(a).
Define $h_P\:\overline{|P|}\x I\to\overline{|P|}$ to be the unique extension of
$|R|H^{-1}$ over the completion, where $|[2]|$ is identified with $I=[0,1]$ by
the affine homeomorphism sending $\{2\}$ to $1$.
Further let $h^\bullet_P$ be defined similarly to $h_P$ but using {\it atomic}
geometric realizations throughout, provided that $P$ is either atomic or the dual
cone over an atomic poset.
Then it is easy to check that $h^\bullet_{2^\N_w}$ coincides with the homotopy
$h_t$ constructed in the proof of \cite[Corollary \ref{metr:c_00}(b)]{M2}.
On the other hand, the definition of $|P|$ is based on the embedding of
$P=(\mathcal P,\le)$ in $\Delta^{\mathcal P}_w$, and $h_P$ is the restriction of
$h^\bullet_{\Delta^{\mathcal P}_w}$, which in turn is the restriction of
$h^\bullet_{2^{\mathcal P}_w}$.
Hence $h_P(\overline{|P|}\x (0,1])\incl |2^{\mathcal P}_w|^\bullet
\cap\overline{|P|}=|P|$.
\end{proof}

\begin{remark} The proof of Lemma \ref{3.9} also shows, generalizing 
\cite[Remark \ref{metr:CW}]{M2}, that if $P$ is a countable poset, 
then $|P|$ is non-uniformly homotopy equivalent to the direct limit 
$|P|_{CW}$ of $|Q|$'s over all finite subposets $Q$ of $P$.
Since each $|Q|$ is compact, the underlying topology of $|P|_{CW}$
is the direct limit topology (see \cite[\S\ref{metr:dirlimits}]{M2}).
Hence the underlying topological space of $|P|_{CW}$ is a CW complex;
its topology can also be described as the quotient topology with 
respect to the surjection $|P_\sqcup|\to|P|$, where $P_\sqcup$ is 
the disjoint union of all chains of $P$.
\end{remark}

\begin{lemma}\label{cube} $|2^\N_w|$ is a uniform AR.
\end{lemma}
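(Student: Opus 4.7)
The plan is to realize $|2^\N_w|$ concretely inside $c_0$ as the cubohedron $c_{00}\cap [0,1]^\N$ of all finite-support points of the unit cube, and then to conclude the uniform AR property from two ingredients: the cubohedral ANR theorem of \cite{M2} and an explicit linear contraction.

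First, I would apply Theorem \ref{isometry} to the atomic embedding $a\:2^\N_w\emb 2^\N$ (which sends a finite $S\incl\N$ to its characteristic function $\chi_S$) to identify $|2^\N_w|$ with $|2^\N_w|^\bullet\incl [0,1]^\N$. Unwinding the definition, I would then verify that $|2^\N_w|^\bullet$ is exactly $c_{00}\cap [0,1]^\N$, the set of all finite-support points of the unit cube: given such a point $x$ with distinct nonzero values $z_1>\cdots>z_m$, the chain $\emptyset\subset S_1\subset\cdots\subset S_m$ with $S_i=\{j\mid x_j\ge z_i\}$ exhibits $x$ as the convex combination
\[
x=(1-z_1)\chi_\emptyset+\sum_{i=1}^{m-1}(z_i-z_{i+1})\chi_{S_i}+z_m\chi_{S_m},
\]
so $x\in |2^\N_w|^\bullet$; the opposite inclusion is immediate. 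In particular $|2^\N_w|^\bullet=\bigcup_F [0,1]^F\x\{0\}^{\N\but F}$ is the union of all coordinate sub-cubes of $[0,1]^\N$ rooted at the origin, which is a cubohedron in the sense of \cite{M2}. Hence by \cite[Theorem \ref{metr:cubohedron}]{M2}, $|2^\N_w|^\bullet$ is a uniform ANR.

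Next, I would observe that the linear homotopy $H\:|2^\N_w|^\bullet\x I\to |2^\N_w|^\bullet$, $H(x,t)=(1-t)x$, is well-defined (scaling preserves finite support and the unit cube), is $2$-Lipschitz in the sup metric on $c_0$, and satisfies $H_0=\id$ and $H_1\equiv 0$. Thus $|2^\N_w|^\bullet$ is uniformly contractible to the origin. Combining this with the uniform ANR property, I would conclude by the standard argument (available in a uniform form in \cite{M2}) that $|2^\N_w|^\bullet$ is a uniform AR: given a closed uniform embedding $|2^\N_w|^\bullet\emb Y$ into a metrizable uniform space, the ANR property supplies a uniform neighborhood retraction $r\:U\to |2^\N_w|^\bullet$, and this is extended over all of $Y$ by sliding along $H$ using a Urysohn-type uniform weight function for the pair $(Y\but U,\,|2^\N_w|^\bullet)$.

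The main obstacle is Step 1 — showing that the atomic realization $|2^\N_w|^\bullet$ genuinely exhausts $c_{00}\cap [0,1]^\N$ rather than being merely a sparse union of skew simplices missing the bulk of the cube. Once this combinatorial identification is in place, invoking the cubohedron theorem of \cite{M2} and writing down the linear contraction are both routine.
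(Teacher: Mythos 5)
Your proof is correct, and at the top level it follows the same route as the paper: apply Theorem~\ref{isometry} to identify $|2^\N_w|$ with the atomic realization $|2^\N_w|^\bullet$, then recognize this as $q_{00}=c_{00}\cap[0,1]^\N$ and invoke results of \cite{M2}. The difference is purely one of granularity. The paper's proof is a single citation: $q_{00}$ is a uniform AR by \cite[Corollary~\ref{metr:c_00}(b)]{M2}. You instead unwind that citation, proving the concrete equality $|2^\N_w|^\bullet=c_{00}\cap[0,1]^\N$ (your convex-combination computation with the chain $\emptyset\subset S_1\subset\cdots\subset S_m$ is correct and is indeed the content that makes the identification non-vacuous), observing that this set is a cubohedron, applying \cite[Theorem~\ref{metr:cubohedron}]{M2} to get the uniform ANR property, and adding the linear contraction $x\mapsto(1-t)x$ to upgrade ANR to AR. The paper itself acknowledges exactly this AR $=$ ANR $+$ uniform contractibility reduction in the Remark immediately following the lemma, where contractibility is obtained from the cone structure $2^\N_w\simeq C^*(\Delta^\N_w)$ rather than an explicit scaling homotopy. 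So your argument is not a shortcut and not a detour --- it is the expanded form of the paper's one-line proof, and both halves of it (the cubohedron ANR theorem and the contraction) are exactly the tools the paper relies on elsewhere (in the proof of Theorem~\ref{simplicial ANR} and in Lemma~\ref{3.9} respectively).
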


\begin{proof}
By Theorem \ref{isometry}, $|2^\N_w|$ is isometric to
$|2^\N_w|^\bullet=q_{00}$, which is known to be a uniform AR
\cite[Corollary \ref{metr:c_00}(b)]{M2}.
\end{proof}

\begin{remark} Since $2^\N_w\simeq C^*(\Delta^\N_w)$, by Theorem \ref{3.1}(b)
(or alternatively by Corollary \ref{join2}), $|2^\N_w|$ is uniformly
homeomorphic to a cone, and therefore is uniformly contractible.
Thus asserting that it is a uniform AR is equivalent (see
\cite[Lemma \ref{metr:uniform AR}]{M2}) to asserting that it is
a uniform ANR.
\end{remark}

\begin{theorem}\label{simplicial ANR}
If $P$ is a simplicial complex, then $|P|$ is a uniform ANR.
\end{theorem}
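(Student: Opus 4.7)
The plan is to realize $|P|$ as a uniform neighborhood retract of a known uniform AR, namely the infinite-dimensional cube, and then appeal to the standard principle (from the uniform ANR theory developed in \cite{M2}) that a neighborhood retract of a uniform AR is a uniform ANR.

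Concretely, let $V$ be the vertex set of $P$, which is countable by our standing assumption, and view $P$ as its poset of nonempty simplices ordered by inclusion. Then $P$ sits inside $2^V_w$ (the poset of nonempty finite subsets of $V$) as a subposet, and since a simplicial complex is closed under passing to nonempty subfaces, $P$ is in fact a closed subposet of $2^V_w$. By Theorem \ref{subcomplex-NR} applied with $K = 2^V_w$ and $L = P$, $|P|$ is a uniform neighborhood retract of $|2^V_w|$. By Lemma \ref{cube}, $|2^V_w|$ (which we may identify with $|2^\N_w|$ when $V$ is infinite) is a uniform AR. Hence $|P|$ is a uniform ANR.

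There is no serious obstacle in this derivation: the two nontrivial ingredients---the neighborhood retraction theorem and the ``cube'' AR result---have already been established, and the remaining step is the elementary observation that any simplicial complex is a closed subposet of its ambient weak simplex. The one place where care is required is confirming that ``closed subposet'' in the sense of Theorem \ref{subcomplex-NR} coincides with being closed under passing to faces; this is exactly what ensures $|P|$ is a closed subset of $|2^V_w|$, which is the hypothesis implicit in Theorem \ref{subcomplex-NR}. If desired, one may alternatively reduce through the canonical subdivision via Corollary \ref{canonical}, which yields $|P| \cong |P^\#|$ uniformly; the cubical complex $P^\#$ then realizes as a cubohedron in $|2^V_w|^\bullet = q_{00}$, and one cites \cite[Theorem \ref{metr:cubohedron}]{M2} directly, in line with the reduction sketched in the introduction.
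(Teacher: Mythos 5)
Your main argument has a genuine gap at the closedness hypothesis of Theorem~\ref{subcomplex-NR}. You assert that ``$P$ is in fact a closed subposet of $2^V_w$,'' but this hides a conflation between $\Delta^V_w$ (the poset of nonempty finite subsets of $V$, which your parenthetical actually describes) and $2^V_w \simeq C^*\Delta^V_w$ (which also contains $\emptyset$ as its least element, and this is the object appearing in Lemma~\ref{cube}). In $2^V_w$, the empty set sits strictly below every nonempty simplex, so $\fll P \flr_{2^V_w} = P \cup \{\emptyset\} \ne P$ --- the poset of nonempty simplices of $P$ is \emph{not} a closed subposet of $2^V_w$, and Theorem~\ref{subcomplex-NR} cannot be applied with $K = 2^V_w$, $L = P$. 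If you instead really take $K = \Delta^V_w$ (where $P$ \emph{is} closed), then to finish you would need to know $|\Delta^V_w|$ is a uniform ANR, which is itself an instance of the theorem being proved and is not supplied by Lemma~\ref{cube}.

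The paper's proof uses precisely the workaround you mention only in passing at the end: pass to the canonical subdivision first. The point is that $P^\#$ \emph{is} a closed subposet of $(2^\N_w)^\#$ --- because a subinterval of an interval $[\sigma,\tau]$ with $\sigma \ne \emptyset$ still has nonempty lower endpoint, so the problematic element $[\emptyset,\emptyset]$ never appears below $P^\#$. Theorem~\ref{subcomplex-NR} then gives $|P^\#|$ as a uniform neighborhood retract of $|(2^\N_w)^\#|$, and Corollary~\ref{canonical} transfers this back to $|P|$ being a uniform neighborhood retract of $|2^\N_w|$, which is the uniform AR supplied by Lemma~\ref{cube}. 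Your closing remark --- reducing to cubohedra via $P^\#$ and citing \cite[Corollary~\ref{metr:cubohedron}]{M2} --- is also a valid route, and is the one the paper explicitly mentions as the shortcut; but the centerpiece of your proposal as written does not go through without the detour through $\#$.
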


The finite-dimensional case is due to Isbell \cite[1.9]{I1}, \cite[VI.15]{I3}.

It is easy to see that the geometric realizations of simplicial complexes are
cubohedra in the sense of \cite{M2}, and so Theorem \ref{simplicial ANR} is
a special case of \cite[Corollary \ref{metr:cubohedron}]{M2}.
This results in a proof of Theorem \ref{simplicial ANR} based on the uniform
version of Hanner's criterion of ANR'ness
\cite[Theorem \ref{metr:Hanner}(b)]{M2}, which is in turn inherently
rooted in an infinite process.

We now give an alternative proof of Theorem \ref{simplicial ANR}, which is
arguably more technical, but on the other hand more ``combinatorial'' in
that it certainly does not involve any infinite process.

\begin{proof} By \cite[Theorem \ref{comb:2.10}]{M1}, $P$ is isomorphic to
a subcomplex of the simplex $\Delta^\N_w$.
Hence $P^\#$ is isomorphic to a subcomplex of $(\Delta^\N_w)^\#$, which
in turn is a subcomplex of $(C^*\Delta^\N_w)^\#=(2^\N_w)^\#$.
By Theorem \ref{subcomplex-NR}, we have that $|P^\#|$ is a uniform
neighborhood retract of $|(2^\N_w)^\#|$.
Hence by Corollary \ref{canonical}, $|P|$ is a uniform neighborhood retract of
$|2^\N_w|$.
Since $|2^\N_w|$ is a uniform ANR by Lemma \ref{cube}, so is $|P|$.
\end{proof}

\begin{example}\label{counterexample} Consider the poset $[n]=(\{1,\dots,n\},\le)$.
Let $C_n$ be the amalgamated union of $C^\#$ for all proper subchains
$C\subsetneq [n]$.
The canonical map $j_n\:C_n\to[n]^\#$ is an injection, but not an embedding
for $n>1$.
Consider the map $j\bydef \sqcup_{n\in\N}j_n$, injecting
$P\bydef \bigsqcup_{n\in\N} C_n$ into $Q\bydef \bigsqcup_{n\in\N} [n]^\#$.

Then the generalized geometric realization $|P|_j$ is not a uniform ANR.
Indeed, it follows from Corollary \ref{canonical} that each $|C_n|_{j_n}$
is uniformly homeomorphic to $X_n\bydef \bigcup_{C\subsetneq [n]}|C|\incl |[n]|$.
Now $|[n]|=\{(x_0,\dots,x_n)\mid 0=x_0\le\dots\le x_n=1\}$,
and $X_n$ consists of all $(x_0,\dots,x_n)\in |[n]|$ such that $x_i=x_{i+1}$
for some $i$.
But each $(x_0,\dots,x_n)\in |[n]|$ satisfies $x_{i+1}-x_i\le\frac1n$ for
some $i$ (by the pigeonhole principle).
Hence the $\frac1{2n}$-neighborhood of $X_n$ in $|[n]|$ is the entire $|[n]|$.
Consequently, for each $\eps>0$ the $\eps$-neighborhood of $|P|_j$ in $|Q|$
contains $|\bigsqcup_{n\in\N\but[m]}[n]^\#|$ for some $m$, and so does not retract
uniformly or even continuously onto $|P|_j$.
\end{example}

The following is a direct consequence of \cite[Theorem \ref{comb:tmc}]{M1}

\begin{theorem}\label{tmc2}
Let $f\:P\to Q$ be a monotone map between conditionally complete posets.
Then $TMC(f)$ is a conditionally complete poset, and $|TMC(f)|$ uniformly
strongly deformation retracts onto $|LMC(f)|$.
\end{theorem}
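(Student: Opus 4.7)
The plan is to derive both clauses from the combinatorial theorem \cite[Theorem \ref{comb:tmc}]{M1} by applying the geometric realization functor. The first clause—that $TMC(f)$ is conditionally complete whenever $P$ and $Q$ are—is a purely order-theoretic assertion, and I would simply quote it from \cite{M1}. What remains is to upgrade the combinatorial deformation retraction of $TMC(f)$ onto $LMC(f)$ furnished there to a uniform strong deformation retraction of geometric realizations.

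I expect \cite[Theorem \ref{comb:tmc}]{M1} to package the deformation retraction as a monotone map $H\:TMC(f)\x [2]\to TMC(f)$ satisfying $H(-,1)=\id$, $H(-,2)$ factoring through a retraction onto $LMC(f)\incl TMC(f)$, and $H(x,t)=x$ for every $x\in LMC(f)$ and every $t\in[2]$. Given such an $H$, I set $h=|H|$. By Theorem \ref{pullback-pushout} (or directly Theorem \ref{3.1}(a)), $|TMC(f)\x[2]|$ is uniformly homeomorphic to $|TMC(f)|\x|[2]|$, and $|[2]|$ in turn is uniformly homeomorphic to $[0,1]$ (as used in the proof of Corollary \ref{join2}). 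So $h$ becomes a uniformly continuous map $|TMC(f)|\x[0,1]\to|TMC(f)|$. By Corollary \ref{subposet realization}, $|LMC(f)|$ sits in $|TMC(f)|$ as an isometrically embedded closed subspace, and the three conditions on $H$ translate under the functor to the three defining conditions of a uniform strong deformation retraction of $|TMC(f)|$ onto $|LMC(f)|$.

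The only non-formal point is to verify that the combinatorial input genuinely includes a monotone homotopy through $TMC(f)\x[2]$, rather than merely a sequence of elementary monotone retractions or a simplicial collapse. If only the weaker form is available in \cite[Theorem \ref{comb:tmc}]{M1}, then one must assemble $H$ by hand, using the conditional completeness of $TMC(f)$ to take joins of the intermediate stages—this is where the hypothesis on $P$ and $Q$ would really be needed a second time, and I expect it to be the main technical obstacle. Once $H$ is supplied, however, the rest of the argument reduces to a functorial repackaging using the preservation results of Section \ref{geometric realization}.
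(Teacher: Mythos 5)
Your reconstruction matches the paper's intent exactly: the paper offers no proof beyond stating that the theorem is "a direct consequence of [Theorem \ref{comb:tmc}]{M1}," and the functorial argument you lay out—quoting the order-theoretic clause verbatim, realizing the combinatorial deformation retraction $H\:TMC(f)\x[2]\to TMC(f)$ geometrically, and converting $|TMC(f)\x[2]|\cong|TMC(f)|\x[0,1]$ via Theorem \ref{3.1}(a)—is precisely what "direct consequence" must mean here. Your closing caveat about whether the cited theorem delivers a single monotone homotopy or only elementary collapses is a legitimate concern to raise without access to \cite{M1}, but it does not affect the correctness of the route you propose.
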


\begin{proposition}\label{tmc-retraction}
Let $f\:P\to Q$ be a monotone embedding between posets.
Then $|\left<MC(f)\right>|$ uniformly deformation retracts onto $|MC(f)|$.
\end{proposition}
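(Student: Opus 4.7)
The plan is to construct a uniform deformation retraction by exploiting the monotone embedding hypothesis. Since $f$ is a monotone embedding, every chain of $\left<MC(f)\right>$ with nonempty $P$- and $Q$-parts $\{p_1<\cdots<p_r\}$ and $\{q_1<\cdots<q_s\}$ admits a canonical enlargement $\tilde C=C\cup\{f(p_r)\}$: by monotonicity, $f(p_i)\le f(p_r)$ for all $i\le r$ and $f(p_r)\le q_1$, so $\tilde C$ is totally ordered in $\left<MC(f)\right>$ and contains $C$ as a face. Moreover, the insertion of $f(p_r)$ realizes the triangle/join decomposition $|\tilde C|=X*\{f(p_r)\}*Y$ with $X=|\{p_1,\dots,p_r\}|$ and $Y=|\{q_1,\dots,q_s\}|$, in which the subcomplex $(X*\{f(p_r)\})\cup(\{f(p_r)\}*Y)$ consists entirely of chains of $MC(f)$.

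I would construct a retraction $r\colon|\left<MC(f)\right>|\to|MC(f)|$ simplex-by-simplex: on each ``extra'' simplex $|C|$ (one whose cross-pairs include some not directly edge-connected in $MC(f)$), apply a piecewise-affine projection inside $|\tilde C|$ onto the above ``L-shaped'' subcomplex, which covers every boundary face of $|\tilde C|$ not involving problematic cross-pairs. The deformation is the linear homotopy $H_t=(1-t)\,\id+tr$ performed inside each $|\tilde C|$; by convexity, $H_t(x)\in|\tilde C|\subseteq|\left<MC(f)\right>|$ for all $t$, and $H_t$ fixes $|MC(f)|$ pointwise since $r$ is the identity on any simplex that is already a chain of $MC(f)$.

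For uniform continuity I would invoke Theorem~\ref{CW}(a), which presents $|\left<MC(f)\right>|$ as a uniform quotient of the disjoint union of its standard skew simplices: the homotopy lifts to this disjoint union as a piecewise-affine map, and face-consistency across the identifications follows from the canonical dependence of the construction on $\max(C\cap P)$ alone.

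The main obstacle is securing a modulus of continuity that is bounded uniformly across simplices of unbounded dimension. A naive projection-from-the-midpoint of the ``missing'' face fails to be Lipschitz (its singularity produces an unbounded constant near the midpoint), so one must choose an explicit axis-parallel-type retraction formula whose $l_\infty$-Lipschitz constant is dimension-independent; in the $2$-dimensional model case this is the retraction of the triangle $\{0\le x\le y\le 1\}$ onto the union $\{x=0\}\cup\{y=1\}$, and the higher-dimensional version is assembled by iterating this on each ``problematic'' coordinate pair indexed by the bad cross-pairs of $C$.
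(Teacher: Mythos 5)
Your approach is genuinely different from the paper's: you try to build the retraction chain-by-chain via an ad hoc enlargement $\tilde C=C\cup\{f(\max(C\cap P))\}$, whereas the paper exploits the embedding hypothesis to identify $\left<MC(f)\right>$ with a subposet of $Q\times[2]$, applies the canonical subdivision $h(\cdot)$, and then restricts the globally defined handle-to-core monotone map $r_{Q\times[2]^*}\colon h(Q\times[2]^*)\to Q\times[2]^*$ (together with Lemma~\ref{canonical-MC} for the homotopy). The paper's route therefore produces one coherent map, rather than a family of simplexwise maps that must be checked to agree.

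That coherence is exactly where your sketch breaks down. The inserted vertex $f(\max(C\cap P))$ is not compatible with passage to faces, because $\max(C\cap P)$ is not a monotone function of the face $C$. Concretely, take a chain $C$ with $P$-part $\{p_1<p_2\}$ and $Q$-part $\{q\}$ that is a chain of $\left<MC(f)\right>$ but not of $MC(f)$, and consider the face $C'$ with $P$-part $\{p_1\}$ and $Q$-part $\{q\}$. Your construction assigns to $C$ the auxiliary simplex on $C\cup\{f(p_2)\}$ and to $C'$ the auxiliary simplex on $C'\cup\{f(p_1)\}$, with two different L-shaped targets. The retraction of $|C|$ onto its L-shape, restricted to the edge $|C'|$, traverses $f(p_2)$, whereas the retraction defined intrinsically on $|C'|$ traverses $f(p_1)$; these are different subsets of $|\left<MC(f)\right>|$, so the simplexwise recipe does not assemble into a single map $r$. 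The appeal to ``canonical dependence on $\max(C\cap P)$ alone'' is the source of the problem, not its cure, and this has to be repaired before uniform continuity via Theorem~\ref{CW}(a) is even a meaningful question. A related issue: a face of $C$ that \emph{is} a chain of $MC(f)$ (there are generally such faces besides the pure $P$-part and pure $Q$-part) need not lie in your L-shape, so the proposed retraction is not obviously the identity on $|MC(f)|\cap|C|$.

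Your second listed obstacle --- finding a dimension-independent $l_\infty$-Lipschitz retraction formula --- is real, and you are right that a radial projection will not do; but in your write-up it is left as a stated desideratum rather than established. The paper sidesteps both obstacles at once: $r_{Q\times[2]^*}$ is a monotone map, hence $1$-Lipschitz on each simplex with uniformly controlled behaviour on the realization, and its restriction to $h(\left<MC(f)\right>)$ is compatible with faces by construction. So the gap is not a matter of filling in a formula; the simplexwise framework needs to be replaced by a global one.
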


\begin{proof} Since $f$ is an embedding, we may identify
$\left<MC(f)\right>$ with a subposet of $Q\x[2]$.
Then $h(\left<MC(f)\right>)$ gets identified with a subposet of
$h(Q\x [2])\simeq h(Q\x[2]^*)$.
It is easy to see that the monotone map $r_{Q\x[2]^*}\:h(Q\x[2]^*)\to Q\x [2]^*$
(see \cite[\S\ref{comb:handles onto cores}]{M1}) sends $h(\left<MC(f)\right>)$
onto $MC^*(f)$.
Using this, similarly to the proof of Theorem \ref{subcomplex-NR}
one constructs a uniform retraction of $|h(\left<MC(f)\right>)|$ onto
$|h(MC^*(f))|$, and by using Lemma \ref{canonical-MC} or similarly to
the proof of Lemma \ref{3.9} one constructs a uniform homotopy from this
retraction to the identity.
\end{proof}

\begin{definition}[Huge mapping cylinder]
Let $f\:P\to Q$ be a monotone map between countable posets.
Let $j_P\:P\emb 2^\N$ be the usual embedding $p\mapsto\fll p\flr$, where
the underlying set of $P$ is identified with a subset of $\N$.
Let $F$ be the composition $P\x 2^\N\to P\xr{f\x j_P} Q\x 2^\N$ of
the projection and the joint map.
Finally let $HMC(f)$ be the transitive closure $\left<MC(F)\right>$.
Note that $HMC(f)$ contains canonical copies of $P=P\x\{\emptyset\}$ and
$Q=Q\x\{\emptyset\}$.
\end{definition}

\begin{corollary}\label{hmc}
Let $f\:P\to Q$ be a monotone map between countable posets.
Then $|HMC(f)|$ is uniformly homotopy equivalent to $|MC(f)|$ relative to
$|P\sqcup Q|$.
If additionally $P$ and $Q$ are conditionally complete posets and
$f$ preserves infima, then $HMC(f)$ is a conditionally complete poset.
\end{corollary}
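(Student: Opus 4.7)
The plan for part (1) is to construct a uniform deformation retraction of $|HMC(f)|$ onto $|MC(f)|$ via a monotone retraction of the underlying posets. Under the paper's convention placing the target $Q$ below the source $P$ in $MC$, the cross relation in $HMC(f)=\left<MC(F)\right>$ takes the form $(q,T)\le(p,S)$ iff $q\le f(p)$ and $T\subseteq\fll p\flr$. The canonical copies $P\x\{\emptyset\}$ and $Q\x\{\emptyset\}$ therefore jointly form a sub-poset of $HMC(f)$ whose cross relation $(q,\emptyset)\le(p,\emptyset)$ reduces to $q\le f(p)$, which is precisely the defining cross relation of $MC(f)$. This sub-poset is canonically identified with $MC(f)$, and contains the embedded $|P\sqcup Q|$.

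Define the monotone map $\rho\:HMC(f)\to MC(f)$ by $\rho(p,S)=(p,\emptyset)$ and $\rho(q,T)=(q,\emptyset)$. Monotonicity within either side follows from the product order, and the cross relation check reduces on both sides to $q\le f(p)$, so $\rho$ is a monotone retraction. Since $\rho(x)\le x$ in $HMC(f)$ for every $x$ (as $\emptyset\subseteq S$ and $\emptyset\subseteq T$ always), the map $H\:HMC(f)\x[2]\to HMC(f)$ with $H(x,1)=\rho(x)$ and $H(x,2)=x$ is monotone. By Theorem \ref{3.1}(a), $|H|$ realizes as a uniformly continuous homotopy $|HMC(f)|\x[0,1]\to|HMC(f)|$ from $|\rho|$ to the identity. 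Because $\rho$ fixes the sub-poset $MC(f)$ (hence $|P\sqcup Q|$) pointwise, this homotopy is relative to $|P\sqcup Q|$, establishing the required uniform deformation retraction.

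For part (2), assume $P,Q$ are conditionally complete and $f$ preserves infima. Given $A\subset HMC(f)$ with a lower bound, decompose $A=A_P\sqcup A_Q$ with $A_P\subset P\x 2^\N$ and $A_Q\subset Q\x 2^\N$. The one-sided cases reduce to conditional completeness of the products $P\x 2^\N$ and $Q\x 2^\N$, both of which inherit CC from their factors (with $2^\N$ a complete Boolean lattice). In the mixed case, every lower bound of $A$ must lie in $Q\x 2^\N$, and setting $p_1=\inf_P\pi_P(A_P)$, $q_1=\inf_Q\pi_Q(A_Q)$, and $T_1=\bigcap_{(q,T)\in A_Q}T$, the greatest lower bound is explicitly $(\inf_Q\{q_1,f(p_1)\},\,T_1\cap\fll p_1\flr)$. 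Preservation of infima, yielding $f(p_1)=\inf_Q f(\pi_P(A_P))$, is what lets the constraint $q_0\le f(p)$ for each $p\in\pi_P(A_P)$ collapse to the sharp bound $q_0\le f(p_1)$, and what guarantees that the candidate is the supremum (rather than merely a maximal element) of all lower bounds in $Q\x 2^\N$.

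The main obstacle to anticipate is identifying the right convention and the role of preservation of infima in part (2). Once the cross relation in $HMC(f)$ is unpacked correctly (target below source), part (1) follows mechanically from the monotone-homotopy machinery of Theorem \ref{3.1}(a) and the fact that the ``canonical'' sub-poset of $HMC(f)$ cut out by the $\emptyset$-slice actually reproduces $MC(f)$. In part (2), preservation of infima is exactly the hypothesis that makes the explicit candidate for the greatest lower bound the right one, rather than being strictly smaller than the actual supremum of lower bounds, which would leave incomparable maximal lower bounds and break conditional completeness.
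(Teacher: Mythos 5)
The crux of part (1) fails at the identification of the $\emptyset$-level sub-poset of $HMC(f)$ with $MC(f)$. Your computation of the cross relation in $HMC(f)$ is correct, and it does restrict on $(P\sqcup Q)\x\{\emptyset\}$ to the poset with cross relation $q\le f(p)$; but that poset is the \emph{transitive closure} $\left<MC(f)\right>$, not the preposet $MC(f)$ itself. These genuinely differ: a sub-poset of a poset is always a poset, whereas $MC(f)$ of a general monotone map is in general \emph{not} a poset --- the paper explicitly flags this for the diagonal $P\to P\x P$, and that failure is the whole reason $HMC$ is introduced. Correspondingly $|MC(f)|\subsetneq|\left<MC(f)\right>|$ in general: already for $P=\{*\}$, $Q=\{a<b\}$, $f(*)=b$ the preposet $MC(f)$ has no cross adjacency $a\prec *$, so $|MC(f)|$ is a $1$-dimensional tree while $|\left<MC(f)\right>|$ is a filled $2$-simplex. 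Your monotone retraction $\rho$ (which is well constructed, and the $\rho\le\id$ homotopy argument is the right one) therefore gives a uniform deformation retraction of $|HMC(f)|$ onto $|\left<MC(f)\right>|$ rel $|P\sqcup Q|$, which is strictly weaker than the stated claim. The missing passage from $|\left<MC(f)\right>|$ to $|MC(f)|$ is precisely what Proposition \ref{tmc-retraction} provides, but only for monotone \emph{embeddings}. The point of the $2^\N$ factor in the definition of $HMC$ is that the joint map $\hat F\:P\to Q\x 2^\N$, $p\mapsto(f(p),\fll p\flr)$, \emph{is} an embedding even when $f$ is not; the expected route retracts $|HMC(f)|$ onto $|\left<MC(\hat F)\right>|$ (forgetting only the second \emph{source} coordinate, keeping the second target coordinate), then invokes Proposition \ref{tmc-retraction} for $\hat F$, and finally relates $|MC(\hat F)|$ to $|MC(f)|$ rel $|P\sqcup Q|$ using contractibility of $|2^\N|$.

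Part (2) is essentially sound, and you are right that infima-preservation enters precisely to collapse $\inf_Q f(\pi_P(A_P))$ to $f(p_1)$. But the explicit formula tacitly assumes $p_1=\inf_P\pi_P(A_P)$ exists, which may fail (when $\pi_P(A_P)$ has no common lower bound in $P$). In that case $\bigcap_{(p,S)\in A_P}\fll p\flr=\emptyset$ forces the second coordinate of every lower bound of $A$ to be $\emptyset$, and the first coordinate of the greatest lower bound is $\inf_Q\bigl(\{q_1\}\cup f(\pi_P(A_P))\bigr)$, which exists by conditional completeness of $Q$ alone; infima-preservation is only needed when $p_1$ does exist, as you intended.
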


\section{Uniform local contractibility}

\begin{theorem}\label{LCU} If $P$ is a countable poset, then $|P|$ is uniformly
locally contractible.
\end{theorem}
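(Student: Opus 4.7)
The plan is to prove uniform local contractibility of $|P|$ by (i) passing to an iterated canonical subdivision so that closed stars of vertices become uniformly small, (ii) observing that each closed star is star-shaped about its central vertex and hence $1$-Lipschitz contracts to it via the straight-line contraction in the ambient cube, and (iii) blending these local contractions via a Lipschitz partition of unity to produce the desired $\eps$-small homotopy.

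For step (i), given $\eps > 0$, I iterate Corollary \ref{canonical} to identify $|P|$ uniformly with $|P^{(n)}|$, where $P^{(n)}$ denotes the $n$-th canonical subdivision. As stated in the introduction, the covers of $|P^{(n)}|$ by open stars of vertices form a basis of the uniformity of $|P|$ as $n$ grows, so $n$ can be chosen so that every open star $\ost(v, P^{(n)})$ has diameter at most $\eps/2$; let $\delta$ be smaller than a Lebesgue number of this cover. For step (ii), every vertex $v$ of $P^{(n)}$ is identified with the cube point $\chi_{\fll v \flr}$, and its closed star is the union of convex hulls of chains through $v$. Since every such convex hull contains $v$, the whole closed star is star-shaped about $v$ in the ambient cube $[0,1]^{\mathcal P^{(n)}}$, so the straight-line contraction $x \mapsto (1-t)x + tv$ is a $1$-Lipschitz strong deformation retraction of the closed star onto $\{v\}$.

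For step (iii), given $\delta$-close uniformly continuous maps $f, g \colon Y \to |P^{(n)}|$, every pair $(f(y), g(y))$ lies in a common closed star by the Lebesgue number condition. If a continuous selection $y \mapsto v(y)$ were available, the pointwise two-stage formula
\[
H(y, t) = \begin{cases} (1 - 2t)\, f(y) + 2t\, v(y) & 0 \le t \le 1/2, \\ (2 - 2t)\, v(y) + (2t - 1)\, g(y) & 1/2 \le t \le 1, \end{cases}
\]
would stay in $\overline{\ost(v(y))}$, connect $f$ to $g$, and have track diameter at most $\eps/2 < \eps$. The main obstacle is that $v(y)$ cannot be chosen continuously; the remedy is to use a Lipschitz partition of unity $\{\lambda_v\}$ subordinate to the open star cover of $|P^{(n)}|$ and to replace the formula above by an analogous blend with weights $\lambda_v(f(y))$ and $\lambda_v(g(y))$. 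Verifying that the blended point lies in $|P^{(n)}|$ is the delicate combinatorial point: since both $f(y)$ and $g(y)$ lie in a single closed star, the nonzero-coefficient vertices in their barycentric decompositions together span a common chain of $P^{(n)}$, so the convex combination lands in the convex hull of this chain, which is contained in $|P^{(n)}|$. Uniform continuity of $H$ then follows from the Lipschitz property of the $\lambda_v$ and the uniform continuity of $f, g$.
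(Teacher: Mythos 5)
The proposal breaks down at step (iii), and the break is precisely at the spot the paper's proof spends most of its effort on. You claim that since $f(y)$ and $g(y)$ lie in a common closed star $\overline{\st(v,P^{(n)})}$, ``the nonzero-coefficient vertices in their barycentric decompositions together span a common chain of $P^{(n)}$, so the convex combination lands in the convex hull of this chain.'' This is false. A closed star is a union of simplices through $v$, not the convex hull of a chain, and it is not convex. If $f(y)$ lies in $|\{a<v\}|$ and $g(y)$ lies in $|\{b<v\}|$ with $a,b$ incomparable, the supporting vertices $\{a,b,v\}$ are not a chain, and the straight segment from $f(y)$ to $g(y)$ leaves $|P|$. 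The same objection applies to the partition-of-unity blend you sketch, since the blended point is still a convex combination of vertices that need not be pairwise comparable. Step (ii), while correct in isolation (the star-shaped contraction of a single closed star to its center), does not survive the blending, because the contractions associated to distinct center vertices do not agree on overlaps of their stars.

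This non-convexity of stars is exactly the obstruction that the paper's proof of Theorem \ref{LCU} is built around. Given a $\delta$-close pair $(x,y)$, the paper does not try to interpolate directly between $x$ and $y$; instead it produces intermediate points $x'=\phi(x,y)$ and $y'=\psi(x,y)$ such that \emph{each} of the pairs $\{x,x'\}$, $\{x',y'\}$, $\{y',y\}$ lies in the convex hull of a single chain, and the homotopy runs piecewise-linearly through these waypoints. The subtle part is making $\phi$ and $\psi$ simultaneously (a) well-defined independently of the auxiliary chains $A\ni x$, $B\ni y$, and (b) uniformly continuous in $(x,y)$. A naive definition (the intermediary chain from Theorem \ref{CW}, with thresholds picked by the maximum function) fails (b); the paper's fix is an explicit ``staircase'' discretization of coordinates at grid levels $1-4i\delta$, $1-(4i+1)\delta$, etc., with linear spreading of the jump over each cell, which makes the construction $4$-Lipschitz. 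None of this machinery is suggested by, nor replaceable by, the partition-of-unity blend; the convexity step it relies on is unavailable in $|P|$.
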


\begin{proof}
Given $x,y\in |P|$ with $d(x,y)<1$, the proof of Theorem \ref{CW} above
produces $x',y'\in|P|$ such that each of the pairs $\{x,x'\}$, $\{x',y'\}$,
$\{y',y\}$ lies in the convex hull of a single chain of $P$, and
$d(x,x')$ and $d(y,y')$ are bounded above by $2d(x,y)$.
We shall modify this pair of discontinuous maps $(x,y)\mapsto x'$,
$(x,y)\mapsto y'$ into a pair of uniformly continuous maps
$\phi$, $\psi$ from the uniform neighborhood $\{(x,y)\mid d(x,y)<\delta\}$ of
the diagonal in $|P|\x |P|$ into $|P|$ such that $d(x,\phi(x,y))$ and
$d(y,\psi(x,y))$ are bounded above by $\frac\eps2$.
Given $\delta$-close maps $f,g\:X\to|P|$, we then define a homotopy $h_t\:X\to|P|$
by $h_0=f$, $h_1=g$, $h_{1/3}(x)=\phi(f(x),g(x))$ and
$h_{2/3}(x)=\psi(f(x),g(x))$ and by linear extension to the remaining values
of $t$.
Then $h_{1/3}$ and $h_{2/3}$ are uniformly continuous as compositions of
uniformly continuous maps, and are $\frac\eps2$-close to $h_0$ and $h_1$,
respectively.
Since each of $h_1$, $h_{2/3}$ and $h_{1/3}$ is $(\frac\eps2+\delta)$-close to
$h_0$, we infer that $h_t$ is a uniformly continuous
$(\frac\eps2+\delta)$-homotopy.

It remains to construct $\phi$ and $\psi$.
Pick some $x,y\in |P|$ with $d(x,y)<\delta$, and let $A\incl P$ and $B\incl P$ be
some finite chains whose convex hulls contain $x$ and $y$ respectively.
Arguing as in the proof of Theorem \ref{CW}, we may enlarge $P$ to $\hat P=C^*CP$
and consider the unique isomorphisms $a\:[m]\to\hat A\incl\hat P$ and
$b\:[n]\to\hat B\incl\hat P$.
Thus $a(1)=\hat 0=b(1)$ and $a(m)=\hat 1=b(n)$.
Let $(k_1,l_1),\dots,(k_r,l_r)$ and $(k'_1,l'_1),\dots,(k'_{r'},l'_{r'})$, and
$Z$ and $Z'$, and $\alpha_i$, $\beta_i$ be as in the proof of Theorem
\ref{isometry}.
We recall that $\alpha_i=x(s)$ for any $s\in a(i)\but a(i-1)$, $1<i\le m$, and
$\beta_i=y(s)$ for any $s\in b(i)\but b(i-1)$, $1<j\le n$.

The basic problem with the original construction of $x'$, $y'$ in the proof of
Theorem \ref{CW} is that they depend on the choice of $A$, $B$.
But they should not if $\phi$ and $\psi$ are to be continuous; indeed, if $A$, $B$
are taken to be the {\it smallest} chains whose convex hulls contain $x$, $y$
respectively, then a pair $(\tilde x,\tilde y)$ arbitrarily close to $(x,y)$ can
give rise to a different pair of chains $(\tilde A,\tilde B)$.

Let $\delta$ be such that $\delta\le\frac\eps6$ and $N\bydef \frac 1{4\delta}\in\Z$.
Let $u_i,u'_i\in[m]$ and $v_i,v'_i\in[n]$ be the maximal numbers such that
$\alpha_{u_i}\ge 1-4i\delta$, $\alpha_{u'_i}\ge 1-(4i+1)\delta$,
$\beta_{v_i}\ge 1-(4i+2)\delta$ and $\beta_{v'_i}\ge 1-(4i+3)\delta$.
Thus $\alpha_{u_0}=1$ and $u_0\ge 2$, whereas $\alpha_{u_N}=\alpha_{u_N-1}=0$
and $u_N=m$.
It is easy to see\footnotemark\ that for each $\kappa\in [m]$ there exists
a $\lambda\in[n]$ such that $a(\kappa)\le b(\lambda)$ and
$\beta_\lambda\ge\alpha_\kappa-\delta$.
\footnotetext{Indeed, let $i$ be the minimal number satisfying $k_i\ge\kappa$,
and let $\lambda=l_i$.
If $i=1$ then $\beta_\lambda=\alpha_\kappa=1$.
Else $\kappa>k_{i-1}$, hence
$\beta_\lambda=\beta_{l_i}\ge\alpha_{k_{i-1}+1}-d(x,y)\ge\alpha_\kappa-\delta$.
(The first inequality was established in the proof of Theorem \ref{isometry}.)}
Similarly, for each $\lambda\in[n]$ there exists a $\kappa\in[m]$ such that
$b(\lambda)\le a(\kappa)$ and $\alpha_\kappa\ge\beta_\lambda-\delta$.
Hence each $a(u'_i)\le b(v_i)$ and each $b(v'_i)\le a(u_{i+1})$.
Thus we get an `intermediary' chain $C$ consisting of:
\begin{alignat*}{10}
\hat 0&\le &a(u_1) &\le &a(u_1+1) &\le &\dots &\le & a(u'_1)\\
&\le &b(v_1) &\le &b(v_1+1) &\le &\mspace{2mu}\dots &\le & b(v'_1)\\
&\le\ &a(u_2) &\le\ &a(u_2+1) &\le\ &\dots &\le\ & a(u'_2)\\
&\le &\cdots&\cdots&\cdots\cdots\cdots&\cdots&\cdots&\cdots&\cdots\ \ \;\\
\end{alignat*}
(It should be noted that if $u_i\in Z$, then the inequalities
$u_i\le u_i'\le\dots\le u_{i+k}\le u_{i+k}'$ may all happen to be equalities for
an arbitrarily large $k$.
This is the only way that it can happen, for it is easy to see\footnotemark\ that
if $a(\kappa)\notin B$, then $\alpha_{\kappa+1}\ge\alpha_\kappa-2\delta$.)
\footnotetext{Let $i$ be the minimal number satisfying $k_i\ge\kappa$.
By the hypothesis $i\ne 1$.
Then $\kappa>k_{i-1}$, hence
$\alpha_\kappa\le\alpha_{k_{i-1}+1}\le\beta_{l_i}+d(x,y)$.
Next let $j$ be the minimal number satisfying $l'_j\ge l_i$.
Then $j\ne 1$ due to $i\ne 1$.
Hence $l_i>l'_{j-1}$, so $\beta_{l_i}\le\beta_{l'_{j-1}+1}\le\alpha_{k'_j}+d(x,y)$.
Thus $\alpha_\kappa\le\alpha_{k'_j}+2\delta$.
Now if $k'_j>k_i$, then $k'_j>\kappa$ due to $k_i\ge\kappa$.
If $k'_j\le k_i$, then $a(k_i)\le b(l_i)\le b(l'_j)\le a(k'_j)\le a(k_i)$,
implying $k'_j=k_i$ and $i\in Z$.
The latter implies $k_i\ne\kappa$ in view of the hypothesis.
Then $k_i>\kappa$, and so $k'_j>\kappa$ once again.
Thus we obtain that $\alpha_{k'_j}\le\alpha_{\kappa+1}$.}

If we use this chain $C$ to construct $x'$ and $y'$ as in the proof of
Theorem \ref{CW}, the result will no longer depend on the choice of $A$ and $B$.
However, the definition of $C$ now involves the maximum function, which is
discontinuous; so an arbitrarily small change in the coordinates of $x$ can
lead to a significant (even though bounded above by $\delta$) change in the
coordinates of $x'$.

Thus we need a new construction of $x'$ and $y'$ that would compensate for
the discontinuity of the maximum function.
We set $x'(s)=1-4i\delta$ for all $s\in a(u_i)\but a(u'_{i-1})$, and (not entirely
symmetrically) $y'(s)=1-4i\delta$ for all $s\in b(v_i)\but b(v'_{i-1})$.
We shall define $x'(s)$ and $y'(s)$ for the remaining values of $s$ by
distributing the total jump value $4\delta$ (between e.g.\ $x'(s)$ and $x'(t)$ for
$s\in a(u_i)\but a(u'_{i-1})$ and $t\in a(u_{i+1})\but a(u'_i)$, provided
that such $s$ and $t$ exist) over all the jumps so as to best
approximate the (continuous) uniform distribution.
Thus the jump value over $a(j)\but a(j-1)$ must be proportional to the
step length $\alpha_{j-1}-\alpha_j$ for each $j\in [u_i+1,u'_i+1]$.
The total horizontal length of the stairs is $\delta$ (from $1-4i\delta$ to
$1-(4i+1)\delta$, for instance).
Therefore we set $x'(s)=(1-4i\delta)-4((1-4i\delta)-\alpha_j)$ for all
$s\in a(j)\but a(j-1)$, for each $j\in [u_i+1,u'_i]$.
Similarly (but not entirely symmetrically)
$y'(s)=(1-4i\delta)-4((1-(4i+2)\delta)-\beta_j)$ for all
$s\in b(j)\but b(j-1)$, for each $j\in [v_i+1,v'_i]$.
We define $\phi(x,y)=x'$ and $\psi(x,y)=y'$.
We also define $\alpha'_j=x'(s)$ for all $s\in a(j)\but a(j-1)$, where $2\le j\le m$,
and $\beta'_j=y'(s)$ for all $s\in b(j)\but b(j-1)$, where $2\le j\le n$
(beware that this notation is not entirely analogous to that in the proof of
Theorem \ref{CW}).
Then $\alpha'_2=\alpha'_{u_0}=1$ and $\alpha'_{m-1}=\alpha'_{u_N}=0$,
where $u_0\ge 2$ and $u_N=m$, so $x'\in|A|_{a}$.
Due to the non-symmetric definition of $y'$, also $\beta'_2=\beta_{v_0}=1$ and
$\beta'_{n-1}=\beta'_{v_{N-1}}=0$, where $v_0\ge 2$ and $v_{N-1}\le n$, so
$y'\in |B|_b$.

It is easy to check that $x'$ and $y'$ do not depend on the choice
of $A$ and $B$.
When $s\in a(u_i)\but a(u'_{i-1})$ we have
$x(s)\in[\alpha_{u_i},\alpha_{u'_{i-1}+1}]\subset[1-4i\delta,1-(4i-3)\delta]$
whereas $x'(s)=1-4i\delta$.
When $s\in a(u'_i)\but a(u_i)$ we have $x'(s)-D=4[x(s)-D]$, where $D=1-4i\delta$,
so $x'(s)-x(s)=[x'(s)-D]-[x(s)-D]=3[x(s)-D]\in [0,3\delta]$.
In both cases $|x'(s)-x(s)|\le 3\delta\le\eps/2$ as desired.
Similarly (but not entirely symmetrically) $|y'(s)-y(s)|\le 3\delta\le\eps/2$.

It remains to verify that $\phi$ and $\psi$ are uniformly continuous, that is,
for each $\zeta>0$ there exists an $\eta>0$ such that $d(x,\tilde x)<\eta$ and
$d(y,\tilde y)<\eta$ imply $d(x',\tilde x')<\zeta$ and $d(y',\tilde y')<\zeta$.
By the proof of Theorem \ref{CW} for each $\theta>0$ there exists an $\eta>0$
(namely, $\eta=\theta/5$) such that given $x,\tilde x\in|P|$ with
$d(x,\tilde x)\le\eta$, there exist $x^*,\tilde x^*\in |P|$ such that each of
the pairs $\{x,x^*\}$,
$\{x^*,\tilde x^*\}$, $\{\tilde x^*,\tilde x\}$ has diameter at most $\theta$
and lies in the convex hull of some chain of $P$.
Given $y,\tilde y\in|P|$ with $d(y,\tilde y)\le\eta$, we similarly get
$y^*,\tilde y^*$.
Therefore it suffices to consider the case where the pairs $\{x,\tilde x\}$ and
$\{y,\tilde y\}$ lie in the convex hulls of some chains $A$ and $B$, respectively.
Since $\phi$ and $\psi$ are well-defined, we may assume that
$x',\tilde x',y',\tilde y'$ are all defined using these $A$ and $B$.
In this case, we set $\eta=\min(\zeta/4,\delta/2)$.

Thus suppose that $d(x,\tilde x)<\eta$.
In other words, $|\alpha_j(s)-\tilde\alpha_j(s)|<\eta$ for all $s\in\mathcal P$.
Fix some $j$; by symmetry we may assume that $\alpha_j(s)\ge\tilde\alpha_j(s)$.
Since $\eta<\delta$, one of the following four cases has to occur for some $i$:
\begin{roster}
\item $1-4i\delta>\alpha_j(s)\ge\tilde\alpha_j(s)\ge 1-(4i+1)\delta$;
\item $1-(4i-3)\delta>\alpha_j(s)\ge\tilde\alpha_j(s)\ge 1-4i\delta$;
\item
$1-(4i-3)\delta>\alpha_j(s)\ge 1-4i\delta>\tilde\alpha_j(s)\ge 1-(4i+1)\delta$;
\item
$1-4i\delta>\alpha_j(s)\ge 1-(4i+1)\delta>\tilde\alpha_j(s)\ge 1-(4i+4)\delta$.
\end{roster}

In the case (i), we have
$\alpha'_j(s)-\tilde\alpha'_j(s)=4(\alpha_j(s)-\tilde\alpha_j(s))$.
In the case (ii), both $\alpha'_j(s)$ and $\tilde\alpha'_j(s)$ equal $1-4i\delta$.
In the case (iii), $\alpha'_j(s)=1-4i\delta$, whereas
$\tilde\alpha'_j(s)\in [1-4i\delta,1-4i\delta-4\eta]$.
Similarly, in the case (iv), $\tilde\alpha'_j(s)=1-(4i+4)\delta$, whereas
$\alpha'_j(s)\in [1-(4i+4)\delta+4\eta,1-(4i+4)\delta]$.

In all cases, $\alpha'_j(s)-\tilde\alpha'_j(s)\in [0,4\eta]$.
This shows that $d(x',\tilde x')\le 4\eta$.
Thus $\phi$ is uniformly continuous; the uniform continuity of $\psi$ is verified
similarly.
\end{proof}

\begin{example}\label{counterexample2}
Given a preposet $P=(\mathcal P,\le)$, we define the ``co-deleted prejoin''
$P\boxplus P^*$ to be the preposet $(\mathcal P\sqcup\mathcal P^*,\preceq)$,
where $\mathcal P^*=\{p^*\mid p\in P\}$ is a just fancy notation for a copy
of $\mathcal P$, and the relation is defined by
\begin{itemize}
\item $p\preceq q$ iff $p\le q$;
\item $p^*\preceq q^*$ iff $p\ge q$;
\item $p^*\preceq q$ never holds;
\item $p\preceq q^*$ iff either $p\le q$ or $p\ge q$
\end{itemize}
for all $p,q\in\mathcal P$.
Note that $P\boxplus P^*$ need not be a poset even if $P$ is.

Let us define $j\:P\to(P\boxplus P^*)^\#$ by $j(p)=[p,p^*]$.
Obviously, $j$ is a monotone embedding, i.e.\ $p\le q$ if and only if
$j(p)\preceq j(q)$.
We claim that $|j|$ is a homotopy equivalence.
Indeed, $|j|$ is homotopic to the composition $|P|\xr{|i|}|P|\xr{h}|P^\#|$,
where $h$ is the uniform homeomorphism.
On the other hand, $i^\flat$ is split by the simplicial map
$r\:(P\boxplus P^*)^\flat\to P^\flat$, defined on vertices by $p,p^*\mapsto p$.
Given a chain $\sigma=(p_1<\dots<p_n)\in P^\flat$ we have
$r^{-1}(\fll\sigma\flr)=\fll\sigma\boxplus\bar\sigma\flr$, where
$\sigma\boxplus\bar\sigma$ denotes the chain
$(p_1<\dots<p_n<p_n^*<\dots<p_1^*)\in (P\boxplus P^*)^\flat$.
Since $r$ is simplicial, it follows that
$|r|\:|(P\boxplus P^*)^\flat|\to|P^\flat|$ has contractible point-inverses,
and therefore (or by Quillen's fiber lemma) is a homotopy equivalence.
If $k$ is a homotopy inverse to $|r|$, then $k=k|ri^\flat|\simeq|i^\flat|$,
so $|i^\flat|$ is also a homotopy equivalence.

Let $K_0$ be the preposet of the four sets $0$, $\{0,1\}$, $\{0,2\}$ and
$\{\{0,1\},\{0,2\}\}$ ordered by $\in$.
Thus $|K_0|$ is homeomorphic to $S^1$.
Let $K_{n+1}=K_n\boxplus K_n^*$.
Finally let $K=K_0\sqcup K_1\sqcup\dots$.

We claim that $|K|$ is not uniformly locally contractible (and in particular is
not a uniform ANR).
Indeed, by the above we have an embedding $f_n\:K_0\to K_n^{\#n}$ such that
$|f_n|$ is a homotopy equivalence.
In order to use the $d_\infty$ metric, which has been shown to work only
for posets, we consider the transitive closure.
Let $f'_n$ be the composition
$K_0\xr{f_n}K_n^{\#n}\subset\left<K_n^{\#n}\right>\subset\left<K_n\right>^{\#n}$.
Since $f'_n$ is monotone, the image of $|f'_n|$ has diameter $1$ in the $d_\infty$
metric on $|\left<K_n\right>^{\#n}|$, hence by Corollary \ref{canonical},
the image of the composition
$|K_0|\xr{|f_n'|}|\left<K_n\right>^{\#n}|\xr{h_n}|\left<K_n\right>|$ has
diameter $2^{-n}$ in the $d_\infty$ metric on $|\left<K_n\right>|$.
Since $\id\:(|\left<K_n\right>|,d_\infty)\to(|\left<K_n\right>|,d)$ is
$1$-Lipschitz, the image of the composition
$|K_0|\xr{|f_n|}|K_n^{\#n}|\xr{h_n}|K_n|$ has diameter at most $2^{-n}$
with respect to the usual metric on $|K_n|$.
However this composition is not null-homotopic since it is a homotopy
equivalence.
\end{example}

We note that the preposet $K$ in Example \ref{counterexample2} satisfies the
following property (P): For each $\eps>0$ there exists an essential map
$S^1\to |K|$ with image of diameter $<\eps$.
On the other hand, since $|K^\flat|$ is a uniform ANR, $|K|$ is a
non-uniform ANR, and in particular satisfies the non-uniform homotopy
extension property.
It follows that every metrizable uniform space that is uniformly homotopy
equivalent to $|K|$ satisfies (P) as well.
In particular, we get the following

\begin{theorem}\label{CCP-homotopy}
There exists a countable preposet whose geometric realization is not uniformly
homotopy equivalent to a uniform ANR, nor even to a uniformly locally contractible
metrizable uniform space.
\end{theorem}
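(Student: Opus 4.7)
The plan is to use the preposet $K = K_0 \sqcup K_1 \sqcup \cdots$ already constructed in Example \ref{counterexample2}, whose geometric realization $|K|$ is shown there to enjoy property (P): for every $\eps>0$ there is a continuous map $\phi\:S^1\to|K|$ with $\diam\phi(S^1)<\eps$ that is not null-homotopic. The strategy is to promote (P) to a uniform homotopy invariant and to observe that uniform local contractibility outright prohibits it; $K$ then witnesses the theorem.

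For the first half, suppose $f\:|K|\to X$ and $g\:X\to|K|$ are mutually inverse uniform homotopy equivalences with $X$ metrizable. Given $\eta>0$, uniform continuity of $f$ furnishes an $\eps>0$ such that every subset of $|K|$ of diameter less than $\eps$ has $f$-image of diameter less than $\eta$. Apply (P) to obtain an essential $\phi\:S^1\to|K|$ with $\diam\phi(S^1)<\eps$. Then $f\phi\:S^1\to X$ has image of diameter less than $\eta$; and if $f\phi$ were null-homotopic, so would be $gf\phi$, but $gf\phi\simeq\phi$ because $gf\simeq\id_{|K|}$, contradicting the essentiality of $\phi$. Thus $X$ satisfies (P) as well.

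For the second half, assume $X$ is a uniformly locally contractible metrizable uniform space. Feeding any fixed positive number into the definition (as quoted after the statement of Theorem \ref{LCU}) yields a $\delta>0$ such that every two $\delta$-close uniformly continuous maps from an arbitrary metric space into $X$ are homotopic. Any $\phi\:S^1\to X$ with $\diam\phi(S^1)<\delta$ is then $\delta$-close to the constant map at $\phi(s_0)$ for any $s_0\in S^1$, hence null-homotopic; so $X$ does not satisfy (P). Combining the two halves proves that $|K|$ is not uniformly homotopy equivalent to any uniformly locally contractible metrizable uniform space, which in particular excludes all uniform ANRs.

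The conceptual work is essentially already done in Example \ref{counterexample2}: producing arbitrarily small essential loops is the geometric substance, and the remaining argument is routine uniform-continuity bookkeeping. The only potential subtlety is making sure the loops from Example \ref{counterexample2} are essential as \emph{continuous} (not merely uniformly-continuous) maps, and that null-homotopies between continuous maps suffice to invoke (P); but this is automatic since essentiality is established there by composing with the homotopy equivalence $|f_n|$ and the base case $|K_0|\cong S^1$ is compact, so the notions of continuity and uniform continuity coincide on the domain and topological null-homotopies are the relevant obstruction throughout.
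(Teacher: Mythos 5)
Your proposal is correct and follows the same route as the paper: both extract property (P) from Example \ref{counterexample2} and deduce the theorem by showing (P) passes to uniformly homotopy equivalent spaces while being incompatible with uniform local contractibility. Your transfer step via $gf\phi\simeq\phi$ is in fact cleaner than the paper's terse appeal to the topological homotopy extension property of $|K|$ (which the argument does not seem to genuinely require), and you also make explicit the final step---that a uniformly locally contractible space cannot admit arbitrarily small essential loops---which the paper leaves implicit.
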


\section{Combinatorics of covers}\label{coverings}

In this subsection we shall need basic operations and relations on covers
as introduced in \cite[\S\ref{metr:cover notation}]{M2}, as well as
the following additional notation.

\begin{definition}[Nerve]
We recall that the {\it nerve} of a cover $C\subset 2^S$ of a set $S$ is
the simplicial poset $N(C)\subset 2^C$, where a subset $B\subset C$ is a simplex
of $N(C)$ iff $\bigcap B$ (the intersection of all elements of $B$) is
nonempty.
The notion of nerve was introduced by Alexandroff \cite{Al}; the poset view
was employed by Segal \cite{Se}.
We note that for a cover $C$,

\medskip
$\bullet$ $C$ countable and point-finite iff $N(C)$ is a simplicial complex;

\smallskip
$\bullet$ $C$ is countable and Noetherian iff $N(C)$ is a Noetherian simplicial
complex;

\smallskip
$\bullet$ $C$ is countable and star-finite iff $N(C)$ is a locally finite
simplicial complex.
\end{definition}

\begin{definition}[Simplex determined by subset]
Given a nonempty $T\subset S$ that is contained in at least one element of $C$, let
$\Delta_C(T)$ denote the element $\{U\in C\mid T\subset U\}$ of $N(C)$.
Given an $s\in S$, we write $\Delta_C(s)=\Delta_C(\{s\})$.
Note that every element of $N(C)$ belongs to some simplex of $N(C)$ of
the form $\fll\Delta_C(s)\flr$ for some $s\in S$.
\end{definition}

\begin{definition}[Cover by open stars]
If $P$ is a poset, by the {\it open star} $\ost(p,P)$ of a $p\in P$ we mean
$|\st(p,P)|\but|\st(p,P)\but\cel p\cer|$.
(We recall that $\st(p,P)=\bigfl\cel p\cer\bigfr$.)

If $P$ is atomic with atom set $\Lambda$, we have the cover
$\{\ost(\lambda,P)\mid\lambda\in\Lambda\}$ of $|P|^\bullet$ by
{\it open stars of vertices.}
It is easy to see that the open star of a vertex $v\in\Lambda$ in $|P|^\bullet$
is precisely the set of points of $|P|^\bullet\subset [0,1]^\Lambda$ whose $v$th
coordinate is nonzero.
On the other hand, the set of points of $|P^\bullet|$ whose $v$th coordinate
equals $1$ is precisely the dual cone $|\cel v\cer|$ of $v$.
Thus we get a cover of $|P|^\bullet$ by {\it dual cones of vertices}, and it
follows that the cover of $|P|^\bullet$ by the open stars of vertices is uniform
with Lebesgue number $1-\eps$ for each $\eps>0$.
\end{definition}

\begin{definition}[Strict shrinking]
If $C$ is a cover of a uniform space $X$, a {\it strict shrinking} of $C$ is a cover
$C'$ of $X$ such that there exist a uniform cover $D$ of $X$ and a bijection
$C\to C'$, denoted $U\mapsto U'$, such that $\st(U',\,D)\subset U$ for each $U\in C$.
\end{definition}

The following is a strengthened statement of \cite[IV.19]{I3}:

\begin{lemma} Let $X$ be a uniform space, $C$ a uniform cover of $X$, and $D$
a uniform barycentric refinement of $C$.
Then $C_D\bydef \{U_D\mid U\in C\}$, where $U_D=X\but\st(X\but U,D)$, is a strict
shrinking of $C$.
\end{lemma}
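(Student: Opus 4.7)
The plan is to verify two things: that $C_D$ is in fact a cover of $X$, and that $D$ itself serves as the witnessing uniform cover, i.e., $\st(U_D,\,D)\subset U$ for every $U\in C$. Once both are established, the indexing $U\mapsto U_D$ is a bijection $C\to C_D$ by definition of $C_D$ as an indexed family, so $C_D$ qualifies as a strict shrinking of $C$.

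First I would handle the star condition, which is just unwinding definitions. Suppose $x\in\st(U_D,\,D)$. Then some $V\in D$ contains $x$ and meets $U_D$, say at $y\in V\cap U_D$. By the definition of $U_D=X\setminus\st(X\setminus U,\,D)$, the point $y$ lies in no element of $D$ that meets $X\setminus U$; in particular $V\cap(X\setminus U)=\emptyset$, so $V\subset U$, and hence $x\in U$. This gives $\st(U_D,\,D)\subset U$.

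Next I would check that $C_D$ covers $X$, using the star-refinement hypothesis. Given $x\in X$, choose $U\in C$ with $\st(x,\,D)\subset U$. I claim $x\in U_D$: if not, some $W\in D$ with $x\in W$ meets $X\setminus U$, but then $W\subset\st(x,\,D)\subset U$, contradicting $W\cap(X\setminus U)\ne\emptyset$. Therefore $x\in U_D$, and $C_D$ covers $X$.

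No serious obstacle is expected here; the only subtlety is the purely formal one of regarding $C_D$ as an indexed family over $C$ (so that distinct $U_1,U_2\in C$ with accidentally equal $U_{1,D}=U_{2,D}$ still give two elements of $C_D$), which matches the definition of strict shrinking.
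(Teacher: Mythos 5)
Your proof is correct and takes essentially the same route as the paper: the paper disposes of $\st(U_D,D)\subset U$ with the word "obviously" and then derives the covering property from the star-refinement hypothesis exactly as you do. You have merely expanded the "obvious" step, and your remark about treating $C_D$ as an indexed family is a reasonable reading of what the paper leaves implicit.
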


Beware that $C_D$ is not intended to be a uniform cover here.

\begin{proof}
Obviously, $\st(U_D,D)\subset U$ for each $U\in C$.
On the other hand, since $D$ is a barycentric refinement of $C$,
for every $x\in X$, $\st(x,D)$ lies in some $U\in C$, whence $x\in U_D$.
Thus $C_D$ is a cover of $X$.
\end{proof}

\begin{example}
Let $P$ be a simplicial complex or more generally an atomic poset.
Let $C$ be the cover of $|P|$ by open stars of vertices, and let $C^\#$ denote
the image of the cover of $|P^\#|$ by open stars of vertices under
the homeomorphism $|P^\#|\cong |P|$.
Then $C^\#$ barycentrically refines $C$, and $C_{C^\#}$ is the cover of $|P|$ by dual cones
of vertices.
\end{example}

\begin{lemma}\label{nerve map} Let $X$ be a metrizable uniform space and $C$
a countable point-finite uniform open cover of $X$.

(a) There exists a uniformly continuous map $\phi_C\:X\to |N(C)|^\bullet$ that sends
each $x\in X$ into the interior of $|\fll\Delta_C(x)\flr|^\bullet$.

(b) If $D$ is a uniform barycentric refinement of $C$, then there exists a uniformly
continuous map $\phi_{C,D}\:X\to |N(C)|^\bullet$ that sends each $x\in X$
into the intersection of $|\cel\Delta_{C_D}(x)\cer|^\bullet$ and the interior of
$|\fll\Delta_C(x)\flr|^\bullet$.
\end{lemma}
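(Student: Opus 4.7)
The plan is to construct $\phi_C$ and $\phi_{C,D}$ simultaneously via the classical distance-to-complement recipe. Fix a metric $d$ on $X$ generating its uniformity, write $\rho$ for distance in this metric, and choose $\lambda>0$ so that every $d$-ball of radius $\lambda$ is contained in some element of $C$ (for part (a)), respectively of $D$ (for part (b)); such a $\lambda$ exists because on a metric space every uniform cover admits a positive Lebesgue number. For each $U\in C$ define
\[
\phi(x)_U \bydef \min\!\bigl(\rho(x,\,X\setminus U)/\lambda,\ 1\bigr).
\]
We declare this to be $\phi_C$ (resp.\ $\phi_{C,D}$) once we verify that it takes values in $|N(C)|^\bullet\subset [0,1]^C$.

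The main point is that $\phi(x)\in |N(C)|^\bullet$. Point-finiteness of $C$ makes the support $\sigma\bydef \Delta_C(x)$ finite, and openness of the members of $C$ gives $\phi(x)_U>0$ exactly for $U\in\sigma$; the Lebesgue condition provides some $U_0\in C$ with the ball of radius $\lambda$ around $x$ contained in $U_0$, so $\phi(x)_{U_0}=1$. Sorting the distinct positive values of $\phi(x)$ as $1=s_1>s_2>\cdots>s_k>0$ and setting $\sigma_i\bydef\{U\in\sigma:\phi(x)_U\ge s_i\}$, we obtain a chain $\sigma_1\subsetneq\cdots\subsetneq\sigma_k$ in $N(C)$ (each $\sigma_i$ has $x$ in the intersection of its members, hence is a simplex), and the identity
\[
\phi(x)=s_k\,\chi_{\sigma_k}+\sum_{i=1}^{k-1}(s_i-s_{i+1})\,\chi_{\sigma_i}
\]
exhibits $\phi(x)$ as a convex combination of $\chi_{\sigma_1},\dots,\chi_{\sigma_k}$, hence as a point in the convex hull of a chain of $N(C)$.

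Uniform continuity is then immediate: each coordinate $x\mapsto\phi(x)_U$ is $(1/\lambda)$-Lipschitz (as the composition of the $1$-Lipschitz function $\rho(\cdot,X\setminus U)$ with the truncation $t\mapsto\min(t/\lambda,1)$), so $\phi$ is $(1/\lambda)$-Lipschitz into $[0,1]^C$ with the sup metric. For the localization, $\phi(x)$ has positive coordinates exactly on $\Delta_C(x)$ and zero coordinates outside, placing it in the interior of $|\fll\Delta_C(x)\flr|^\bullet$ and proving (a). For (b), take $\lambda$ to be a Lebesgue number of the star-refinement $D$: if $x\in U_D=X\setminus\st(X\setminus U,D)$, then the $\lambda$-ball around $x$ is contained in some $V\in D$ with $x\in V$, and the definition of $U_D$ forces $V\subset U$, so $\rho(x,X\setminus U)\ge\lambda$ and $\phi(x)_U=1$. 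In view of the identification (noted earlier in the discussion of dual cones of vertices) of $|\cel S\cer|^\bullet$ with the locus where every coordinate indexed by $S$ equals $1$, this places $\phi(x)$ in $|\cel\Delta_{C_D}(x)\cer|^\bullet$, as required. The only genuinely non-bookkeeping ingredient is the chain-and-level-set decomposition that certifies $\phi(x)\in|N(C)|^\bullet$; everything else is routine.
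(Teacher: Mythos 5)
Your proof follows the same route as the paper's: the distance-to-complement formula $\phi(x)_U=\min(\lambda^{-1}\rho(x,X\setminus U),1)$ with $\lambda$ a Lebesgue number of $D$ is exactly the paper's $f_U$, and the checks of uniform continuity, of $\phi(x)_U>0\Leftrightarrow U\in\Delta_C(x)$, and of $\phi(x)_U=1$ for $U\in\Delta_{C_D}(x)$ coincide. The one place you go beyond the paper is the explicit level-set chain decomposition $\phi(x)=s_k\chi_{\sigma_k}+\sum_{i<k}(s_i-s_{i+1})\chi_{\sigma_i}$ certifying $\phi(x)\in|N(C)|^\bullet$; the paper only asserts this membership. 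Your extra step is a genuine (and worthwhile) clarification, since landing in the unit cube $[0,1]^C$ with finite support and a coordinate equal to $1$ is not by itself the same as landing in the union of convex hulls of chains --- the monotone-rearrangement argument is what actually closes that gap.
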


Note that the conclusion of (a) implies that each $U\in C$ is the preimage of
the open star of the vertex $\{U\}$ of $N(C)$.
The conclusion of (c) implies additionally that each $U_D\in C_D$ lies in
the preimage of the dual cone of the vertex $\{U\}$ of $N(C)$.

\begin{proof}[Proof] Note that (b) implies (a).

To prove (b), let $\lambda$ be a Lebesgue number of $D$.
Then each $d(U_D,\,X\but U)\ge\lambda$.
For each $U\in C$ define $f_U\:X\to [0,1]$ by
$f_U(x)=\min(\lambda^{-1}d(x,\,X\but U),\,1)$.
Let us consider $\phi=\prod f_U\:X\to l_\infty^c$.
Then $\{U\in C\mid f_U(x)>0\}=\Delta_C(x)$, and since $C_D$ is
a cover, $\{U\in C\mid f_U(x)=1\}\ne\emptyset$.
Hence $\phi(X)\subset |N(C)|^\bullet$.
Next, $f_U^{-1}(0)=X\but U$, which implies the assertion on $C$, and
$f_U(U_D)=\{1\}$, which implies the assertion on $C_D$.
Finally, $|f_U(x)-f_U(y)|\le\lambda^{-1} d(x,y)$ for each $U\in C$, so
$\phi$ is uniformly continuous.
\end{proof}

\begin{definition}[Subordinated map]
Let $C$ be a cover of a set $S$, and $f\:S\to|P|^\bullet$ a map, where $P$
is an atomic poset.
We say that $f$ is {\it subordinated} to $C$ if $C$ is refined by
$f^{-1}(D)$, where $D$ is the cover of $|P|^\bullet$ by the open stars of
vertices.
A homotopy $h_t\:S\to|P|^\bullet$ is said to be subordinated to $C$ if
it is through maps subordinated to $C$.
\end{definition}

\begin{lemma} \label{nerve map-1}
Let $X$ be a metrizable uniform space.

(a) If $C$ a point-finite countable uniform cover of $X$, and $D$, $E$ are
uniform barycentric refinements of $C$, then $\phi_{C,D}$ and $\phi_{C,E}$ are uniformly
homotopic by a homotopy subordinated to $C$.

(b) Let $P$ a simplicial complex and $f\:X\to |P|^\bullet$ a uniformly continuous map.
Let $E$ be the cover of $|P|^\bullet$ by open stars of vertices, and let
$C=f^{-1}(E)$ and $D=f^{-1}(C^\#)$.
Then $\phi_{C,D}\:X\to |N(C)|^\bullet\subset |N(E)|^\bullet$ is uniformly homotopic
to $f\:X\to|P|^\bullet=|N(E)|^\bullet$ by a homotopy subordinated to $C$.

(c) Let $C$ be a point-finite countable uniform cover of $X$ and $D$ a uniform
barycentric refinement of $C$.
Let $E_D$ be a subset of $C_D$ that still covers $X$, and $E$ the corresponding subset
of $C$.
Then $\phi_{C,D}\:X\to |N(C)|^\bullet$ is uniformly homotopic to
$\phi_{E,D}\:X\to |N(E)|^\bullet\subset |N(C)|^\bullet$ by a homotopy
subordinated to $C$.
\end{lemma}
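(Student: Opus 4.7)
The plan is to realize each of the three homotopies as a straight-line interpolation in the ambient function space (either $[0,1]^C$, or $[0,1]^V$ where $V$ is the vertex set of $P$ in part~(b)), followed by a pointwise renormalization by the $l_\infty$-norm so that the trajectory stays inside the atomic realization.  The key structural observation is that for any simplex $B$ of $N(C)$, the subspace $|\fll B\flr|^\bullet\subset[0,1]^B$ consists precisely of those functions $B\to[0,1]$ whose supremum equals $1$; so a positive convex combination of elements of $|\fll B\flr|^\bullet$, divided by its own sup, returns to $|\fll B\flr|^\bullet$.

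For part~(a), set $H(x,t)=(1-t)\phi_{C,D}(x)+t\phi_{C,E}(x)$.  The coordinate $H(x,t)_U$ vanishes unless $x\in U$, so $H(x,t)$ is supported on the simplex $\Delta_C(x)$ of $N(C)$.  Since $\phi_{C,D}(x)$ attains value $1$ at some vertex in $C_D$ and $\phi_{C,E}(x)$ at some vertex in $C_E$, we have $\|H(x,t)\|_\infty\ge\max(1-t,t)\ge\tfrac12$.  Setting $h_t(x):=H(x,t)/\|H(x,t)\|_\infty$, we obtain $h_t(x)\in|\fll\Delta_C(x)\flr|^\bullet\subset|N(C)|^\bullet$, and $h_t$ is uniformly continuous in $(x,t)$ because the $l_\infty$-norm is $1$-Lipschitz and bounded below by $\tfrac12$.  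Subordination to $C$ follows from the identity $h_t^{-1}(\ost\{U\})=\{x:H(x,t)_U>0\}$, which equals $U$ for $t\in(0,1)$ and is contained in $U$ at the endpoints.

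Part~(b) repeats this argument once one identifies $|N(E)|^\bullet$ with $|P|^\bullet$ via the bijection $E\leftrightarrow V$ (as $N(E)\cong P$ as simplicial posets): the hypothesis $C=f^{-1}(E)$ gives a simplicial embedding $N(C)\hookrightarrow N(E)\cong P$ with vertex map $U=f^{-1}(\ost v)\mapsto v$, and for each $x$ the supports of $f(x)\in|P|^\bullet$ and of $\phi_{C,D}(x)$ both equal $\Delta_E(f(x))$, identified with $\Delta_C(x)$.  The renormalized straight-line homotopy from $\phi_{C,D}$ to $f$ then lies in $|P|^\bullet$ by the same $\tfrac12$-bound, and the subordination argument is identical.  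Part~(c) is slightly cleaner: $\phi_{C,D}(x)_U=\phi_{E,D}(x)_U=f_U(x)$ for $U\in E$, while $\phi_{E,D}(x)_U=0$ for $U\in C\setminus E$; so $H(x,t)_U$ equals $f_U(x)$ on $E$ and $(1-t)f_U(x)$ on $C\setminus E$.  Because $E_D$ covers $X$, for each $x$ there is some $U\in E$ with $x\in U_D$, and at this coordinate $H(x,t)_U\equiv1$, making renormalization unnecessary; the supports stay in $\Delta_C(x)$ for $t<1$ and drop to $\Delta_E(x)$ at $t=1$, which suffices for subordination to $C$.

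The one nontrivial bookkeeping point is in part~(b): to verify that the straight-line path in $|P|^\bullet$ from $f(x)$ to $\phi_{C,D}(x)$ really lies in the single subspace $|\fll\Delta_E(f(x))\flr|^\bullet\subset[0,1]^V$.  That follows once one observes, using $C=f^{-1}(E)$, that the supports of both endpoints are confined to the vertices of $\Delta_E(f(x))$.  With that identification in hand, the uniform lower bound $\|H\|_\infty\ge\tfrac12$ and the standard estimate for renormalization reduce all three parts to the same argument.
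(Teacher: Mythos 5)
Your proposal is correct, but it takes a genuinely different route from the paper's proof, so let me compare.

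The paper's argument (uniform across all three parts) is to identify, for each $x$, a single interval $[A(x),B(x)]$ in the poset $N(C)$ such that \emph{both} endpoint maps send $x$ into $|[A(x),B(x)]|^\bullet$. Concretely, in part~(a) it first performs the reduction ``by considering $\{U_D\cup U_E\mid U\in C\}$ we may assume that each element of $C_D$ is contained in the corresponding element of $C_E$,'' after which $\Delta_{C_D}(x)\subset\Delta_{C_E}(x)$, and then both $\phi_{C,D}(x)$ and $\phi_{C,E}(x)$ lie in $|[\Delta_{C_D}(x),\Delta_C(x)]|^\bullet$. That set is a \emph{convex} cube inside $[0,1]^C$ (coordinates on $\Delta_{C_D}(x)$ pinned to $1$, coordinates off $\Delta_C(x)$ pinned to $0$), so the straight-line homotopy already takes values in $|N(C)|^\bullet$ and needs no further modification; in fact $\|H(x,t)\|_\infty\equiv 1$. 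Parts~(b) and~(c) are handled identically by exhibiting the appropriate interval.

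You instead use only the weaker observation that both endpoints are supported on $\Delta_C(x)$ and have $\sup$-norm $1$; the interpolant $H(x,t)$ can have $\sup$-norm dropping below $1$ (when the ``$=1$'' coordinates of the two endpoints are disjoint), so you renormalize by $\|H(x,t)\|_\infty$, which you correctly bound below by $\max(1-t,t)\ge\tfrac12$. The price is the extra (routine but nontrivial) verification that the renormalized map is still uniformly continuous; the benefit is that you do not need the paper's preliminary reduction in part~(a), which is the least transparent step of the paper's argument (as written, it asserts a ``WLOG'' that one would have to unpack, e.g.\ as a two-step homotopy through an intermediate map associated to $\{U_D\cup U_E\}$). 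Interestingly, your observation in part~(c) that ``renormalization is unnecessary'' because a coordinate on $E$ stays at $1$ is in essence a re-discovery of the paper's convex-cube mechanism, specialized to that case. Both arguments are correct; the paper's is tighter, yours is more self-contained.

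One small point worth flagging: in part~(a) you write that $h_t^{-1}(\ost\{U\})$ ``equals $U$ for $t\in(0,1)$ and is contained in $U$ at the endpoints.'' In fact it equals $U$ at the endpoints as well (since $\phi_{C,D}(x)_U>0$ iff $x\in U$, by Lemma~\ref{nerve map}), so the weaker claim is harmless but the parenthetical ``is contained in'' understates what is true. Also, in part~(c) you implicitly use that $\phi_{C,D}$ and $\phi_{E,D}$ are built with the \emph{same} Lebesgue number $\lambda$ of $D$; this matches the paper's construction, but is worth saying explicitly since the statement of Lemma~\ref{nerve map}(b) only asserts existence of such a map.
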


\begin{proof}[Proof. (a)]
By considering $\{U_D\cup U_E\mid U\in C\}$ we may assume that each element of
$C_D$ is contained in the corresponding element of $C_E$.
Then $\Delta_{C_D}(x)\subset\Delta_{C_E}(x)$ for each $x\in X$.
Hence $[\Delta_{C_E}(x),\Delta_C(x)]\subset
[\Delta_{C_D}(x),\Delta_C(x)]$.
Therefore $\phi_{C,D}(x)$ and $\phi_{C,E}(x)$ both belong to
$|[\Delta_{C_D}(x),\Delta_C(x)]|^\bullet$.
Thus the linear homotopy between $\phi_{C,D}$ and $\phi_{C,E}$ in $q_{00}$
has values in $|N(C)|^\bullet$.
Since both $\phi_{C,D}$ and $\phi_{C,E}$ are subordinated to $C$, so is the
homotopy.
\end{proof}

\begin{proof}[(b)]
By Lemma \ref{nerve map}, $\phi_{C,D}(x)\in
|[\Delta_{C_D}(x),\Delta_C(x)]|^\bullet$ for each $x\in X$.
Clearly, $f(x)\in|[\Delta_{C_D}(x),\Delta_C(x)]|^\bullet$ for each $x\in X$.
Thus the linear homotopy between $f$ and $\phi_{C,C_D}$ in $q_{00}$
has values in $|N(C)|^\bullet$.
Since both $f$ and $\phi_{C,C_D}$ are subordinated to $C$, so is the
homotopy.
\end{proof}

\begin{proof}[(c)]
Given an $x\in X$, we have $\Delta_{E_D}(x)\subset\Delta_{C_D}(x)$ and
$\Delta_E(x)\subset\Delta_C(x)$.
Hence $\Delta_{C_D}(x),\Delta_E(x)\in [\Delta_{E_D}(x),\Delta_C(x)]$.
Therefore $\phi_{C,D}(x)$ and $\phi_{E,D}(x)$ both belong to
$|[\Delta_{E_D}(x),\Delta_C(x)]|^\bullet$.
Thus the linear homotopy between $\phi_{C,D}$ and $\phi_{E,D}$ in $q_{00}$
has values in $|N(C)|^\bullet$.
Since both $\phi_{C,D}$ and $\phi_{E,D}$ are subordinated to $C$, so is the
homotopy.
\end{proof}

\begin{definition}[Intersection poset and Euler diagram]
Given a cover $C$ of a set $S$, the {\it intersection poset} $IP(C)$ is
the subposet of $2^C$ consisting of all nonempty $B\subset C$ such that $\bigcap B$
is not contained in any element of $C\but B$.
The terminology ``intersection poset'' derives from Lemma \ref{IP}(a) below,
which however characterizes $IP(C)$ only up to isomorphism, and not as a subposet
of $\Delta^C$.

The {\it Euler diagram} $ED(C)$ is the subposet of $2^C$ consisting of
all $B\subset C$ such that $\bigcap B$ is not contained in $\bigcup (C\but B)$.
This is a formalization of the intuitive notion of an ``Euler diagram'' from courses of ``abstract 
mathematics'' (already known to Leibniz, and further popularized by Venn, see \cite{Ba}), for it
can be argued that $ED(C)$ contains all the combinatorial information on containment
of points of $X$ in elements of $C$ (see Lemma \ref{IP}(b) below) --- and nothing
else (see Lemma \ref{VD}(b) below).

Clearly, $ED(C)\subset IP(C)\subset N(C)$ and $\fll ED(C)\flr=\fll IP(C)\flr=N(C)$.
Note that if $C$ is countable and point-finite, then $IP(C)$ and $ED(C)$
are cone complexes.
Clearly $\Delta_C(x)$ belongs to $ED(C)$ for every $x\in S$; in contrast,
$\Delta_C(T)$ belongs to $IP(C)$ for every $T\subset S$ that is contained in at
least one element of $C$.
\end{definition}

\begin{lemma}\label{IP} Let $C$ be a cover of a set $S$.

(a) $IP(C)$ is isomorphic to the poset consisting of arbitrary nonempty
intersections of elements of $C$, ordered by reverse inclusion.
In particular, $IP(C)$ is conditionally complete.

(b) $ED(C)$ is isomorphic to the poset consisting of those intersections
of elements of $C$ that are of the form $\bigcap\Delta_C(s)$ for some $s\in S$,
ordered by reverse inclusion.

(c) There is a canonical retraction $N(C)\to IP(C)$.
\end{lemma}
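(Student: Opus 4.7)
The plan is to establish both statements via the natural map $\phi\colon B\mapsto\bigcap B$ sending each element of $IP(C)$ to its (necessarily nonempty, since $IP(C)\subset N(C)$) intersection in the poset of intersections ordered by reverse inclusion. That this is order-reversing is immediate from the definition of the inclusion order on $2^C$. For injectivity of $\phi$ on $IP(C)$: if $\bigcap B=\bigcap B'$ and some $U\in B\setminus B'$, then $\bigcap B'=\bigcap B\subset U$ with $U\in C\setminus B'$, contradicting the defining property of $IP(C)$; hence $B\subset B'$, and by symmetry $B=B'$. For surjectivity: given any nonempty intersection $T=\bigcap B_0$ of elements of $C$, set $B=\{U\in C\mid T\subset U\}$. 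Then $B_0\subset B$, so $\bigcap B\subset\bigcap B_0=T$, while $T\subset\bigcap B$ by construction of $B$; thus $\bigcap B=T$, and by the defining property $B\in IP(C)$.

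For the conditional completeness in (a), I would take a nonempty family $\mathcal{A}\subset IP(C)$ with some upper bound $B'\in IP(C)$, and show that the least upper bound exists and equals
\[
\hat B\;=\;\{U\in C\mid T\subset U\},\qquad T\;=\;\bigcap_{B\in\mathcal{A}}\bigcap B.
\]
Since $B\subset B'$ for every $B\in\mathcal{A}$, we have $\bigcap B'\subset T$, so $T\ne\emptyset$, and the construction from the surjectivity argument shows $\hat B\in IP(C)$ with $\phi(\hat B)=T$. For each $B\in\mathcal{A}$ we have $T\subset\bigcap B\subset U$ for every $U\in B$, so $B\subset\hat B$, making $\hat B$ an upper bound. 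If $B''\in IP(C)$ is any other upper bound, then $\bigcap B''\subset\bigcap B$ for each $B\in\mathcal{A}$, so $\bigcap B''\subset T$; hence for each $U\in\hat B$ we have $\bigcap B''\subset T\subset U$, forcing $U\in B''$ by the defining property of $IP(C)$. Thus $\hat B\subset B''$, so $\hat B$ is least.

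For (b), since $VD(C)\subset IP(C)$ the map $\phi$ restricts, and it remains only to identify the image. I would prove that $B\in VD(C)$ if and only if $B=\Delta_C(s)$ for some $s\in S$. Indeed, $B\in VD(C)$ means there exists $s\in\bigcap B$ with $s\notin\bigcup(C\setminus B)$, which is precisely to say that $s\in U$ iff $U\in B$, i.e.\ $B=\Delta_C(s)$; and conversely any such $\Delta_C(s)$ manifestly lies in $VD(C)$. Applying $\phi$ therefore yields a bijection between $VD(C)$ and intersections of the form $\bigcap\Delta_C(s)$, order-reversing as before.

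The only delicate point is the verification of conditional completeness, where one must check both that $\hat B$ does lie in $IP(C)$ (not merely in $2^C$) and that it is actually least among upper bounds in $IP(C)$; both depend on applying the defining property of $IP(C)$ to elements of $C\setminus B''$ rather than to arbitrary subsets. The rest is essentially bookkeeping about nonempty intersections.
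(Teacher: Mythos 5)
Your proof is correct and rests on the same underlying bijection as the paper's: the map $B\mapsto\bigcap B$ from $IP(C)$ to nonempty intersections, whose inverse is $T\mapsto\{U\in C\mid T\subset U\}$. The paper packages this more abstractly (and in slightly greater generality, allowing arbitrary indexed collections $\phi\colon C\to 2^S$ with possible repetitions), observing that the anti-monotone maps $\Phi\colon D\mapsto\bigcap\phi(D)$ and $\Delta_\phi\colon T\mapsto\phi^{-1}(\cel T\cer)$ restrict to mutually inverse bijections between $IP(\phi)$ and $\Phi(\Delta^C)\but\{\emptyset\}$, from which both (a) and (b) drop out; you instead verify injectivity and surjectivity by hand, and you also spell out the conditional-completeness argument which the paper dispatches with ``in particular'' (it is easier to see on the intersection side: a bounded family of nonempty intersections has a nonempty total intersection, which is the least upper bound in reverse inclusion). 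One small terminological slip: since the target is already ordered by \emph{reverse} inclusion, the map $B\mapsto\bigcap B$ is order-\emph{preserving} (an isomorphism, as the lemma asserts), not order-reversing; what you actually check, namely that $B\subset B'$ forces $\bigcap B\supset\bigcap B'$, is of course exactly the needed monotonicity.
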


\begin{proof}[Proof. (a,b)]
It will be convenient to work in a slightly greater generality.
The definitions of $N(C)$, $IP(C)$ and $ED(C)$ generalize straightforwardly for any
collection $\phi\:C\to 2^S$ of subsets of $S$ (possibly with repeated subsets
$\phi(U)=\phi(U')$ and with $\bigcup\phi(C)$ not necessarily covering the whole
of $S$).
It is easy to see that

\medskip
$\bullet$ $N(\phi)=\Phi^{-1}(\Delta^S)$, where $\Phi\:\Delta^C\to 2^S$ is defined
by $D\mapsto\bigcap\phi(D)$;

\smallskip
$\bullet$ $IP(\phi)=\Delta_\phi(\Delta^S)\but\{\emptyset\}$, where
$\Delta_\phi\:\Delta^S\to 2^C$
is defined by $T\mapsto\phi^{-1}(\cel T\cer)$;

\smallskip
$\bullet$ $ED(\phi)=\Delta_\phi(A(\Delta^S))\but\{\emptyset\}$, where
the subset $A(\Delta^S)=\{\{s\}\mid s\in S\}$ of $\Delta^S$ should not
be confused with the element $S$ of $\Delta^S$.

\medskip
\noindent
We note that the maps $\Phi$ and $\Delta_\phi$ are anti-monotone, and restrict
to mutually inverse bijections between $IP(\phi)$ and
$\Phi(\Delta^C)\but\{\emptyset\}$.
In particular, $IP(\phi)$ is isomorphic to
$(\Phi(\Delta^C)\but\{\emptyset\})^*$, which implies the first assertion of (a).
Similarly, $ED(\phi)$ is isomorphic to
$\Phi(\Delta_\phi(A(\Delta^S))\but\{\emptyset\})^*$, which yields (b).
\end{proof}

\begin{remark} It follows from the proof that $IP(\phi)\simeq IP(\Phi)$ and
$ED(\phi)\simeq ED(\Phi)$.
\end{remark}

\begin{proof}[Proof of (c)]
Given a $B\in N(C)$, i.e., a $B\subset C$ such that $\bigcap B\ne\emptyset$,
let $B'\subset C$ consist of all elements of $C\but B$ that contain $\bigcap B$.
Then $\bigcap B=\bigcap (B\cup B')$, and $B\cup B'\in IP(C)$.
The map $N(C)\to IP(C)$, $B\mapsto B\cup B'$ is monotone: if $B_1\subset B_2$,
then $\bigcap B_2\subset\bigcap B_1$, whence $B_1'\but B_2\subset B_2'$; thus
$B_1\cup B_1'\subset B_2\cup B_1'\subset B_2\cup B_2'$.
Clearly, if $B\in IP(C)$, then $B'=\emptyset$, so this map is a retraction. 
\end{proof}

\begin{remark} (a) Every point-finite open cover $C$ of a topological space $X$ clearly yields 
a continuous map $X\to ED(C)$, $x\mapsto\Delta_C(x)$, where the underlying set of 
the poset $ED(C)$ is endowed with the Alexandroff topology.

(b) If $C$, $D$ are covers of a set $S$ and $C$ {\it partitions} $D$, i.e., every element of $D$ is 
a union of elements of $C$, then there is a canonical monotone
map $f_{CD}\:ED(C)\to ED(D)$, $\Delta_C(x)\mapsto\Delta_D(x)$.
If further $D$ partitions a cover $E$ of $S$, then clearly $f_{DE}\circ f_{CD}(B)=f_{CE}(B)$.

(c) If $C$, $D$ are covers of a set $S$ and $C$ refines $D$, then there is a canonical monotone
map $f_{CD}\:IP(C)\to IP(D)$, $B\mapsto\Delta_D(\bigcap B)$.
If further $D$ refines a cover $E$ of $S$, then clearly $f_{DE}\circ f_{CD}(B)\subset f_{CE}(B)$.
\end{remark}

\begin{lemma}\label{VD}
Let $P$ be a poset embedded in some $\Delta^\Lambda$.
Let $C$ be the cover $\{\cel\lambda\cer\cap P\mid\lambda\in\Lambda\}$ of
the underlying set of $P$ by the dual cones of vertices.

(a) $ED(C)\simeq P$.

(b) $IP(C)=ED(C)$ if and only if for every $R\subset A(\Delta^\Lambda)$, the set
of all upper bounds of $R$ in $P$ either is empty or is the dual cone in $P$
of a single element of $P$.
\end{lemma}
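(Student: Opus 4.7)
The plan is to unpack both $VD(C)$ and $IP(C)$ via Lemma \ref{IP}, translate them into explicit posets of subsets of $P$, and then recognize those subsets as dual cones (respectively, upper-bound sets) in $P$.

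First I would identify every $s\in P\subset \Delta^\Lambda$ with a nonempty subset of $\Lambda$, so that $\cel\lambda\cer\cap P=\{t\in P\mid \lambda\in t\}$. Then for $s\in P$ one has $\Delta_C(s)=\{\cel\lambda\cer\cap P\mid \lambda\in s\}$, and so
\[
\bigcap\Delta_C(s)=\{t\in P\mid s\subseteq t\}=\cel s\cer_P,
\]
the dual cone of $s$ in $P$. By Lemma \ref{IP}(b), $VD(C)$ is isomorphic to the poset $\{\cel s\cer_P\mid s\in P\}$ under reverse inclusion. Since $s$ is the minimum of $\cel s\cer_P$, the assignment $s\mapsto\cel s\cer_P$ is an injection $P\to 2^P$ that is order-reversing for inclusion and hence order-preserving for reverse inclusion; this gives the isomorphism $P\simeq VD(C)$ of part (a).

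For (b), I would apply Lemma \ref{IP}(a): $IP(C)$ is isomorphic to the poset of all nonempty intersections of elements of $C$, under reverse inclusion. Identifying $R\subseteq A(\Delta^\Lambda)$ with a subset of $\Lambda$, such an intersection is exactly
\[
\bigcap_{\lambda\in R}\bigl(\cel\lambda\cer\cap P\bigr)=\{t\in P\mid R\subseteq t\},
\]
which is precisely the set of upper bounds of $R$ in $P$. Under the identifications of (a), the inclusion $VD(C)\subseteq IP(C)$ becomes the inclusion of $\{\cel s\cer_P\mid s\in P\}$ into the family of all nonempty upper-bound sets of subsets of $A(\Delta^\Lambda)$. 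Thus $IP(C)=VD(C)$ holds if and only if every nonempty upper-bound set coincides with $\cel s\cer_P$ for some $s\in P$, which is exactly the condition stated in (b).

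No step looks hard: the main obstacle is purely bookkeeping, namely keeping track of two kinds of dual cones ($\cel\lambda\cer$ in $\Delta^\Lambda$ versus $\cel s\cer_P$ in $P$) and of the duality between reverse inclusion of intersections and the original order on $P$. Once the dictionary $s\leftrightarrow\cel s\cer_P$ is in place, both parts read off directly from Lemma \ref{IP}.
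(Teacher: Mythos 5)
Your proof is correct and follows essentially the same route as the paper's: both parts reduce to Lemma \ref{IP}, identifying $VD(C)$ with the poset of dual cones $\cel s\cer_P$ and $IP(C)$ with the poset of nonempty intersections $P\cap\cel R\cer$ for $R\subset\Lambda$. You simply spell out the computation of $\bigcap\Delta_C(s)=\cel s\cer_P$ more explicitly than the paper's terse appeal to the formal machinery in the proof of Lemma \ref{IP}.
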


\begin{proof}[Proof. (a)] By Lemma \ref{IP}(b), $ED(C)$ is isomorphic to
the poset consisting of the dual cones
$\cel p\cer^{\Delta^\Lambda}\cap P=\cel p\cer^P$ of all elements $p\in P$,
ordered by reverse inclusion.
The latter is obviously isomorphic to $P$.
\end{proof}

\begin{proof}[(b)] By Lemma \ref{IP}(a), $IP(C)$ is isomorphic to the poset of
all nonempty intersections of the form $\bigcap\sigma$, where $\sigma\subset C$,
ordered by reverse inclusion.
We have $\bigcap\sigma=P\cap\bigcap_{\lambda\in R_\sigma}\cel\lambda\cer$, where
$R_\sigma=\{\{\lambda\}\mid\lambda\in\Lambda,\,(\cel\lambda\cer\cap P)\in\sigma\}$.
(The subset $R_\sigma$ of $\Delta^\Lambda$ should not be confused with
the element $\bigcup R_\sigma$ of $\Delta^\Lambda$.)
Thus $IP(C)$ is in bijection with the set of all nonempty intersections of
the form $P\cap\cel R\cer$, where $R\subset\Lambda$.
By the proof of Lemma \ref{IP}, the same bijection sends $ED(C)$ onto
the set of the dual cones $\cel p\cer^P$ of all elements $p\in P$, and the
assertion follows.
\end{proof}

\begin{corollary} Let $P$ be an atomic poset, and let $C$ be the cover of
$P$ by the dual cones of its atoms.
Then $IP(C)=ED(C)$ if and only if $P$ is conditionally complete.
\end{corollary}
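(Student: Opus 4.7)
The plan is to apply Lemma \ref{VD}(b) to $P$ realized via the embedding $a_P\:P\emb\Delta^{A(P)}$ that exists because $P$ is atomic, and then translate its criterion into conditional completeness by using atomicity to reduce from arbitrary subsets of $P$ to subsets of atoms.

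First I will unpack Lemma \ref{VD}(b): under the canonical bijection $A(\Delta^{A(P)})\leftrightarrow A(P)$, the condition becomes that for every nonempty $R\subset A(P)$, the set of upper bounds of $R$ in $P$ is either empty or is the dual cone $\cel p\cer^P$ of a single element $p\in P$. But an upper bound set equal to $\cel p\cer^P$ means exactly that $p$ is the least upper bound of $R$. Thus the criterion reads: every nonempty subset of $A(P)$ either has no upper bound in $P$ or has a supremum in $P$.

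The backward direction (CCP $\Rightarrow$ $IP(C)=VD(C)$) is then immediate, since every nonempty subset of atoms is in particular a nonempty subset of $P$. For the forward direction I will exploit atomicity in the form that $a_P$ is a poset embedding sending each $p\in P$ to the (nonempty) subset $A(p)=\{\lambda\in A(P)\mid\lambda\le p\}$, so that $p\le q$ iff $A(p)\subset A(q)$ and $A(p)\ne\emptyset$. Given an arbitrary nonempty $S\subset P$, set $R=\bigcup_{p\in S} A(p)$; this is a nonempty subset of $A(P)$. For any $q\in P$, $q$ is an upper bound of $S$ iff $A(p)\subset A(q)$ for every $p\in S$ iff $R\subset A(q)$ iff $q$ is an upper bound of $R$. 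Applying the criterion to $R$ then gives either no upper bound or a least upper bound for $S$, proving $P$ is CCP.

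The only mild subtlety is confirming that upper-boundedness in $P$ of the vertex subset $R\subset A(\Delta^{A(P)})$ (as meant in Lemma \ref{VD}(b)) coincides with upper-boundedness in $P$ of the corresponding set of atoms of $P$; this just unwinds the fact that $a_P$ sends each atom $\lambda\in P$ to the singleton $\{\lambda\}\in A(\Delta^{A(P)})$, so both conditions say $\lambda\in A(q)$ for every $\lambda$ in the underlying set. There is no essential obstacle beyond this straightforward translation.
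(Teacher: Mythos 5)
Your proposal is correct and follows essentially the same route as the paper: embed $P$ via $a_P$ into $\Delta^{A(P)}$, apply Lemma \ref{VD}(b), and read the resulting criterion as conditional completeness. The one difference is that where the paper delegates the ``only if'' direction to an external lemma of the companion paper (\cite[Lemma \ref{comb:atomic CCP}]{M1}, which states that an atomic poset in which every upper-bounded set of atoms has a supremum is conditionally complete), you supply a short inline proof of that reduction by replacing an arbitrary nonempty $S\subset P$ with $R=\bigcup_{p\in S}A(p)$ and checking that $S$ and $R$ have the same set of upper bounds. This is a correct re-derivation of the cited lemma rather than a new method; the paper also sketches a second ``only if'' argument (via Lemma \ref{VD}(a) and the conditional completeness of $IP(C)$ from Lemma \ref{IP}(a)) which your proposal does not use.
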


It is not hard to see that the same assertion is true of the cover of $|P|$ by
the geometric realizations of the dual cones of the vertices of $P$, and of
the cover of $|P|$ by the open stars of these vertices.

\begin{proof} Let us embed $P$ in $\Delta^{A(P)}$ as in
\cite[Lemma \ref{comb:2.9}]{M1}.
Lemma \ref{VD}(b) then says that $IP(C)=ED(C)$ if and only if every $R\subset A(P)$
that has an upper bound in $P$ has a least upper bound in $P$.
This proves the ``if'' assertion, and the ``only if'' assertion now follows from
\cite[Lemma \ref{comb:atomic CCP}]{M1}.
Alternatively, the ``only if'' assertion follows from Lemma \ref{VD}(a)
and the second assertion of Lemma \ref{IP}(a).
\end{proof}

\begin{definition}[Canonical bonding map]\label{canonical bonding}
Let $C$ and $D$ be covers of a set $S$, and suppose that $C$ barycentrically refines $D$.
We define a map $\phi^C_D\:N(C)\to N(D)^\#$ by sending each $\sigma\in N(C)$
into $[\Delta_D(\bigcup\sigma),\Delta_D(\bigcap\sigma)]\in N(D)^\#$.
Here $\Delta_D(\bigcup\sigma)$ is non-empty by the barycentric refinement hypothesis;
every vertex of $\Delta_D(\bigcup\sigma)$ is obviously a vertex of
$\Delta_D(\bigcap\sigma)$; and $\Delta_D(\bigcap\sigma)\in N(D)$ since
$\bigcap\sigma\ne\emptyset$.

Given a $\tau\le\sigma$, we have $\bigcup\tau\subset\bigcup\sigma$, whence
$\Delta_D(\bigcup\tau)\ge\Delta_D(\bigcup\sigma)$; and
$\bigcap\tau\supset\bigcap\sigma$, whence
$\Delta_D(\bigcap\tau)\le\Delta_D(\bigcap\sigma)$.
Thus $\phi^C_D$ is monotone.

Finally, recall that $IP(D)$ contains every element of $N(D)$ of the form
$\Delta_D(T)$, where $T\subset S$.
Hence $[\Delta_D(\bigcup\sigma),\Delta_D(\bigcap\sigma)]$ belongs to
the isomorphic copy of $IP(D)^\#$ in $N(D)^\#$.
Thus we may write $\phi^C_D\:N(C)\to IP(D)^\#$.
\end{definition}

\begin{remark} Let us discuss some motivation/geometry behind the definition of
$\phi^C_D$.

If $V\in\sigma$ then $\{V\}\le\sigma$, so by the above $\Delta_D(V)$ belongs to
$[\Delta_D(\bigcup\sigma),\Delta_D(\bigcap\sigma)]$.
Hence the map defined on vertices by $\{V\}\mapsto [\Delta_D(V),\Delta_D(V)]$ extends
uniquely to a monotone map $N(C)\to N(D)^\#$ sending each simplex $\fll\sigma\flr$
into the cube $\fll[\Delta_D(\bigcup\sigma),\Delta_D(\bigcap\sigma)]\flr$.

If $C$ star-refines $D$, then $\Delta_D(\st(V,C))\in N(C)$, and it is
not hard to see that $\phi^C_D$ sends each $\st(\{V\},N(C))$ into the canonical
subdivision of the dual cone of $\Delta_D(\st(V,C))$ in the subcomplex
$\bigcup\{\fll\Delta_D(x)\flr\mid x\in V\}$ of $N(D)$.
\end{remark}

\begin{remark}\label{canonical-barycentric}
Let us note that $\phi^C_D\:N(C)\to N(D)^\#$ induces a simplicial map between the barycentric
subdivisions $N(C)^\flat\to \big(N(D)^\#\big)^\flat$. 
In fact, here $\big(N(D)^\#\big)^\flat$ is a subdivision of $N(D)^\flat$.

In general, let us show that $(P^\#)^\flat$ is a subdivision of $P^\flat$ for every preposet $P$.
Every simplex of $(P^\#)^\flat$ is a chain $[p_1,q_1]\subsetneqq\dots\subsetneqq[p_n,q_n]$ of intervals of $P$.
Such a chain of intervals determines a chain $p_n\subset\dots\subset p_1\subset q_1\subset\dots\subset q_n$
of $P$ (where the inclusions are not necessarily proper).
Moreover, a subchain $[p_{i_1},q_{i_1}]\subsetneqq\dots\subsetneqq[p_{i_k},q_{i_k}]$ of the chain of intervals, 
where $1\le i_1<\dots<i_k\le n$, determines a subchain 
$p_{i_k}\subset\dots\subset p_{i_1}\subset q_{i_1}\subset\dots\subset q_{i_n}$ of the chain of $P$.
Thus we get a monotone map $f\:(P^\#)^\flat\to P^\flat$.
Every $n$-simplex $\fll\sigma\flr$ of $P^\flat$ is the isomorphic image of the simplex $[n+1]^\flat$ under 
the map induced by a monotone injection $[n+1]\to P$.
Clearly, $f^{-1}(\fll\sigma\flr)$ is the isomorphic image of $K_n\bydef ([n+1]^\#)^\flat$ and 
$f^{-1}(\partial\fll\sigma\flr)$ is the isomorphic image of 
$L_n\bydef \bigcup_{i=1}^{n+1}\big(([n+1]\but\{i\})^\#\big)^\flat$.
But it is easy to see that $|K_n|$ is a ball with boundary $|L_n|$.
\end{remark}

\begin{proposition}
Let $X$ be a metrizable uniform space, $P$ and $Q$ simplicial complexes, and
$f\:X\to |P|$ and $g\:X\to|Q|$ uniformly continuous maps.
Let $C_P$ and $C_Q$ be the covers of $|P|$ and $|Q|$ by the open stars of vertices,
and let $D\bydef g^{-1}(C_Q)$.
If $C\bydef f^{-1}(C_P)$ refines $E\bydef g^{-1}(C_Q^\#)$, then the composition
$X\xr{f}|P|\xr{|\phi^C_D|}|Q^\#|\cong|Q|$ is uniformly homotopic to $g$ by
a homotopy subordinated to $D$.
\end{proposition}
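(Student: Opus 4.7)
The plan is to construct the required homotopy pointwise by a straight-line interpolation within a common cube of $|N(D)^\#|$, in the spirit of Lemma~\ref{nerve map-1}. It suffices to produce a uniform $D$-subordinated homotopy between $|\phi^C_D|\circ f$ and the canonical nerve approximation $\phi_{D,E}\colon X\to|N(D)|^\bullet$, since by Lemma~\ref{nerve map-1}(b) the latter is already $D$-subordinated-homotopic to $g$. Throughout, I identify $|N(D)^\#|$ with $|N(D)|$ via the canonical uniform homeomorphism $h$ of Corollary~\ref{canonical}.

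For each $x\in X$, let $\sigma:=\Delta_C(x)\in N(C)$. Monotonicity of $\phi^C_D$ together with $f(x)\in|\fll\sigma\flr_{N(C)}|$ forces $|\phi^C_D|(f(x))$ to lie in the cube
$$K(x):=\bigl|\fll[\Delta_D(\textstyle\bigcup\sigma),\Delta_D(\bigcap\sigma)]\flr_{N(D)^\#}\bigr|,$$
while Lemma~\ref{nerve map}(b) and the definition of $h$ force $h^{-1}(\phi_{D,E}(x))$ to lie in
$$K'(x):=\bigl|\fll[\Delta_{D_E}(x),\Delta_D(x)]\flr_{N(D)^\#}\bigr|.$$

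The central combinatorial step is the chain of inclusions in $N(D)$,
$$\Delta_{D_E}(x)\;\subseteq\;\Delta_D(\textstyle\bigcup\sigma)\;\subseteq\;\Delta_D(\bigcap\sigma)\;\subseteq\;\Delta_D(x).$$
The first inclusion uses precisely the hypothesis that $C$ refines $E$: for any $W\in\Delta_{D_E}(x)$ we have $\st(x,E)\subseteq W$ by the definition of $D_E$, and $\bigcup\sigma=\st(x,C)\subseteq\st(x,E)$ by the refinement, so $W\supseteq\bigcup\sigma$. The second inclusion follows from $\bigcap\sigma\subseteq\bigcup\sigma$, and the third from $x\in\bigcap\sigma$. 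Consequently $K(x)\subseteq K'(x)$, so both points of interest lie in the common cube $K'(x)$.

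I then define $H_t(x)$ as the straight-line interpolation in $K'(x)$, viewed via the cubical-complex structure of $P^\#$ as a literal $l_\infty$-cube of dimension at most $|\Delta_D(x)\setminus\Delta_{D_E}(x)|$. Uniform continuity of $H$ follows from that of its two endpoints and the linearity of the interpolation (as in the proof of Lemma~\ref{nerve map-1}). Subordination to $D$ holds because $K'(x)$ lies in the open star of any vertex of $\Delta_{D_E}(x)$ in $N(D)^\#$, and each such vertex corresponds under $V(N(D))\cong D$ to an element of $D$ containing $x$ (in fact, containing $\st(x,E)$). The principal obstacle is making the ``straight-line interpolation in $K'(x)$'' rigorous, since the standard simplicial realization of the cube $K'(x)$ is not convex in the ambient $l_\infty$-space; the interpolation must instead be performed via the canonical identification of $K'(x)$ with a literal cube provided by the cubical structure of $P^\#$, and the uniform continuity of this construction requires checking compatibility as the cubes $K'(x)$ vary with $x$.
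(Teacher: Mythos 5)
Your argument is correct and rests on the same key combinatorial observation as the paper's proof, namely the chain of inclusions
$$\Delta_{D_E}(x)\ \subseteq\ \Delta_D(\textstyle\bigcup\sigma)\ \subseteq\ \Delta_D(\textstyle\bigcap\sigma)\ \subseteq\ \Delta_D(x),\qquad \sigma=\Delta_C(x),$$
derived exactly as you derive it (first inclusion from $\st(x,C)\subseteq\st(x,E)$ plus the definition of $D_E$; third from $x\in\bigcap\sigma$). The difference is purely in how the homotopy is packaged: the paper observes directly that both $g(x)$ and $|\phi^C_D|(f(x))$ lie in the same cube $K'(x)$ and performs a single linear homotopy, while you factor through $\phi_{D,E}$, producing one homotopy inside $K'(x)$ (between $|\phi^C_D|\circ f$ and $\phi_{D,E}$) and delegating the other (between $\phi_{D,E}$ and $g$) to Lemma~\ref{nerve map-1}(b). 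Your route is one step longer but uses the same ingredients and is a legitimate alternative.

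The concern you raise at the end about making "straight-line interpolation in $K'(x)$" rigorous has a clean resolution, and it is precisely the device used in the paper's proof of this proposition and in all three parts of Lemma~\ref{nerve map-1}: one interpolates linearly in $q_{00}$, viewing $K'(x)$ under the identification of $|N(D)^\#|$ with the \emph{atomic} realization $|N(D)|^\bullet\subset q_{00}$ via $h$ from Corollary~\ref{canonical}. In those coordinates $h(K'(x))$ is the honest convex set $\{p\colon D\to[0,1]\mid p(U)=1\text{ for }U\in\Delta_{D_E}(x),\ p(U)=0\text{ for }U\notin\Delta_D(x)\}$ (it is here that nonemptiness of $\Delta_{D_E}(x)$, i.e.\ that $D_E$ is still a cover, matters). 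Hence the linear path stays in $|N(D)|^\bullet$, and since the linear formula is global in $q_{00}$, uniform continuity of the homotopy follows from that of the two endpoint maps --- no separate compatibility check as the cubes $K'(x)$ vary with $x$ is needed.

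Two small slips of notation: the set containing $\st(x,E)$ is not the element $W\in D_E$ itself but the element $U\in D$ with $W=U_E$ (you implicitly acknowledge this when you later pass to "the corresponding element of $D$"); and the cubical structure invoked at the end is that of $N(D)^\#$, not of $P^\#$. Neither affects the argument.
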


Note that $C$ barycentrically refines $D$, since $E$ does; thus $\phi^C_D$ is defined.

\begin{proof}
Given an $x\in X$, we have $g(x)\in|[\Delta_{D_E}(x),\Delta_D(x)]$.
On the other hand, $f(x)\in|\fll\Delta_C(x)\flr|$.
Hence $\phi^C_D(f(x))\in
|[\Delta_D(\bigcup\Delta_C(x)),\Delta_D(\bigcap\Delta_C(x))]|$.
Since $x\in\bigcap\Delta_C(x)$, we have $\Delta_D(\bigcap\Delta_C(x))\subset\Delta_D(x)$.
On the other hand, since $C$ refines $E$, and $\st(U_E,E)\subset U$ for each $U\in D$,
we have that $x\in U_E$ implies $\bigcup\Delta_C(x)\subset U$.
Hence $\Delta_{D_E}(x)\subset\Delta_D(\bigcup\Delta_C(x))$.
Thus both $g(x)$ and $\phi^C_D(f(x))$ lie in $|[\Delta_{D_E}(x),\Delta_D(x)]$.
Then the linear homotopy between $g$ and $\phi^C_D f$ in $q_{00}$
has values in $|N(D)|\subset|Q|$.
Since both $g$ and $\phi^C_D$ are subordinated to $D$, so is the homotopy.
\end{proof}

\begin{proposition} Let $C$ and $D$ be covers of a set $S$.
If $C$ star-refines $D$, there exists a simplicial map
$N(C)\xr{g} N(D)$ such that $N(C)^\#\xr{g^\#}N(D)^\#$ is homotopic to
the composition $N(C)^\#\xr{(\phi^C_D)^\#}N(D)^{\#\#}\xr{r^\#}N(D)^\#$, where
$r$ stands for either $r_{N(D)}$ or $r^*_{N(D)}$, by a monotone homotopy
$N(C)^\#\x I\to N(D)^\#$ sending each block of the form
$\fll\sigma\flr^\#\x I$ into the star of
$[\Delta_D(\bigcup\sigma),\Delta_D(\bigcap\sigma)]$ in $N(D)^\#$.
\end{proposition}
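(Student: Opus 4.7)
The plan is to build $g$ by selecting for each $V\in C$ some $g(V)\in D$ with $\st(V,C)\subset g(V)$ (using the strong star-refinement hypothesis), exploit the resulting chain of simplices $g(\sigma)\le\Delta_D(\bigcup\sigma)\le\Delta_D(\bigcap\sigma)$ in $N(D)$, and then assemble the monotone homotopy by interpolating through a common cell inside the cube $\fll[g(\sigma),\Delta_D(\bigcap\sigma)]\flr\subset N(D)^\#$.

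First I would verify that the choice $g$ defines a simplicial map $N(C)\to N(D)$. For any simplex $\sigma=\{V_1,\dots,V_k\}\in N(C)$, the intersection $\bigcap V_i$ is nonempty, and since $V_i\subset\st(V_i,C)\subset g(V_i)$, we have $\bigcap g(V_i)\supset\bigcap V_i\ne\emptyset$, so $g(\sigma)\bydef\{g(V_1),\dots,g(V_k)\}\in N(D)$. The key combinatorial observation is that for any two $V_i,V_j\in\sigma$ the sets meet (both contain $\bigcap\sigma$), so $V_j\subset\st(V_i,C)\subset g(V_i)$; taking the union over $j$ gives $\bigcup\sigma\subset g(V_i)$, so each $g(V_i)\in\Delta_D(\bigcup\sigma)$ and $g(\sigma)$ is a face of $\Delta_D(\bigcup\sigma)$, which is itself a face of $\Delta_D(\bigcap\sigma)$.

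Combining this with the Remark preceding the statement, which shows $\phi^C_D(\fll\sigma\flr)\subset\fll[\Delta_D(\bigcup\sigma),\Delta_D(\bigcap\sigma)]\flr$ in $N(D)^\#$, I would check that both $g^\#(\fll\sigma\flr^\#)$ and $r^\#((\phi^C_D)^\#(\fll\sigma\flr^\#))$ land in the star $\st([\Delta_D(\bigcup\sigma),\Delta_D(\bigcap\sigma)],N(D)^\#)$: the second containment follows directly from the Remark together with the definition of $r^\#$ (for either choice of $r$), while the first holds because the chain $g(\sigma)\le\Delta_D(\bigcup\sigma)\le\Delta_D(\bigcap\sigma)$ makes $[g(\sigma),\Delta_D(\bigcap\sigma)]$ a valid interval of $N(D)$, and the vertex $[g(\sigma),g(\sigma)]$ together with $[\Delta_D(\bigcup\sigma),\Delta_D(\bigcap\sigma)]$ both lie in the cube $\fll[g(\sigma),\Delta_D(\bigcap\sigma)]\flr$, which is contained in the required star.

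Since the endpoint maps $g^\#$ and $r^\#\circ(\phi^C_D)^\#$ are generally not pointwise comparable as maps into $N(D)^\#$, I would realize the monotone homotopy via the ``V-shaped'' interval poset $I=\{0<1,\,0<2\}$ whose realization is $[0,1]$, by setting $H(\xi,1)=g^\#(\xi)$, $H(\xi,2)=r^\#((\phi^C_D)^\#(\xi))$, and letting $H(\xi,0)$ be a canonical common lower bound living in $\fll[g(\sigma_\xi),\Delta_D(\bigcap\sigma_\xi)]\flr$ for a suitably chosen minimal $\sigma_\xi$ with $\xi\in\fll\sigma_\xi\flr^\#$. The support condition on each block $\fll\sigma\flr^\#\x I$ follows automatically from the cube containment above. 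The main obstacle is making the choice of $H(\xi,0)$ coherent and monotone across overlapping blocks $\fll\sigma\flr^\#\cap\fll\tau\flr^\#$; this amounts to a finite combinatorial check exploiting the compatibility of $g$ and $\phi^C_D$ on sub-simplices, and the parallel treatment of $r=r_{N(D)}$ versus $r=r^*_{N(D)}$ follows the same pattern with only endpoints of intervals swapped.
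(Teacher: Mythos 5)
Your construction of $g$ (choosing $g(V)\in D$ with $\st(V,C)\subset g(V)$) and the containments $g(\sigma)\subset\Delta_D(\bigcup\sigma)\subset\Delta_D(\bigcap\sigma)$ match the paper, and your observation that both endpoint maps land inside the cube $\fll[g(\sigma),\Delta_D(\bigcap\sigma)]\flr$ is sound. The genuine gap is the orientation of your interval poset. You take $I=\{0<1,\,0<2\}$ with $0$ at the \emph{bottom}, so monotonicity of $H\colon N(C)^\#\x I\to N(D)^\#$ forces $H(\xi,0)$ to be a common \emph{lower} bound in $N(D)^\#$ of $g^\#(\xi)$ and $r^\#((\phi^C_D)^\#(\xi))$. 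Such a lower bound generally does not exist: for $\xi=[\sigma,\tau]$ and $r=r_{N(D)}$, a common lower bound $[\alpha,\beta]$ would need $\Delta_D(\bigcup\tau)\le\alpha\le\beta\le\tau'$, hence $\Delta_D(\bigcup\tau)\subset\tau'$; but the star-refinement inclusion gives only $\tau'=g(\tau)\subset\Delta_D(\bigcup\tau)$, and the reverse inclusion fails as soon as $D$ has an element containing $\bigcup\tau$ that is not in the image of $g$.

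The paper instead uses $I=\Delta^{\{0,1\}}$, whose top element $\{0,1\}$ sits \emph{above} the two endpoints, so the middle value must be a common \emph{upper} bound --- and one always exists, namely $[\sigma',\Delta_D(\bigcup\sigma)]$ for $r=r_{N(D)}$ (resp.\ $[\sigma',\Delta_D(\bigcap\sigma)]$ for $r=r^*_{N(D)}$), since $\sigma'\subset\Delta_D(\bigcup\sigma)$ and $\tau'\subset\Delta_D(\bigcup\tau)\subset\Delta_D(\bigcup\sigma)$. The ``finite combinatorial check'' you deferred would have run into exactly this obstruction; reversing the poset $I$ and switching from a common-lower-bound ansatz to this explicit common upper bound is the missing step.
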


\begin{proof}
Given a vertex $\{V\}$ of $N(C)$, the hypothesis furnishes a vertex
$\{V'\}$ of $N(D)$ such that $\st(V,C)\subset V'$.
For each $\sigma\in N(C)$ and each $V\in\sigma$ we have $\bigcup\sigma\subset V'$,
and consequently $\{V'\}\in\fll\Delta_D(\bigcup\sigma)\flr$.
Hence $V\mapsto V'$ extends to a simplicial map $g\:N(C)\to N(D)$ sending each
$\sigma$ onto some
$\sigma'\subset\Delta_D(\bigcup\sigma)\subset\Delta_D(\bigcap\sigma)$.
Then $g^\#$ sends each $[\sigma,\tau]$ onto $[\sigma',\tau']$.
The composition
$N(C)^\#\xr{(\phi^C_D)^\#}N(D)^{\#\#}\xr{r^\#}N(D)^\#$ sends it onto
$[\Delta_D(\bigcup\sigma),\Delta_D(\bigcup\tau)]$ when $r=r_{N(D)}$
and onto $[\Delta_D(\bigcap\sigma),\Delta_D(\bigcap\tau)]$ when $r=r^*_{N(D)}$.
The required homotopy is defined by sending each
$([\sigma,\tau],\{0,1\})$ onto $[\sigma',\Delta_D(\bigcup\sigma)]$ in the first
case and onto $[\sigma',\Delta_D(\bigcap\sigma)]$ in the second case.
\end{proof}

\begin{example}\label{bad bonding map}
Here is a simple example of a canonical bonding map that
is neither closed nor infima-preserving.

Let $C=\{U_1,\,U_2,\,U_3\}$ be the cover of $\Delta^2$ by the open stars of
vertices.
Let $D=\{U_1,\ U_2\cup U_3,\ U_1\cup U_2\cup U_3\}$.
Then $C$ barycentrically refines $D$.
The canonical bonding map $\phi^C_D\:N(C)\to N(D)^\#$ is the composition
of the simplicial surjection $\Delta^2\to\Delta^1$ (see \cite[Example
\ref{comb:non-CCP MC}]{M1}), the diagonal embedding $\Delta^1\to\Delta^1\x\Delta^1$
(see \cite[Example \ref{comb:diagonal}]{M1}), and a subcomplex inclusion
$\Delta^1\x\Delta^1\subset(\Delta^2)^\#$.
\end{example}

\section{Approximation of maps}

\begin{theorem} \label{Hahn} Let $Q$ be a countable conditionally complete poset.
Then $|Q|$ satisfies the Hahn property.
\end{theorem}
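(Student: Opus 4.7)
The plan is to prove that every uniformly continuous map $f\:X\to|Q|$ from a metrizable uniform space $X$ is, for every $\epsilon>0$, uniformly $\epsilon$-close to a uniformly continuous map factoring through a finite-dimensional sub-polyhedron of $|Q|$. The essential tool is that conditional completeness of $Q$ supplies a canonical monotone retraction from the nerve of any sufficiently fine uniform cover of $X$ back into $Q$, built via suprema of atoms.

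Given $\epsilon>0$, apply Theorem \ref{LCU} to choose $\delta>0$ so that $\delta$-close maps into $|Q|$ are $\epsilon$-homotopic. Pick a point-finite uniform open cover $\mathcal{E}$ of $|Q|$ refining the cover by open stars of vertices of $Q$ and with mesh less than $\delta/3$, assigning to each $U\in\mathcal{E}$ a vertex $v_U$ with $U\incl\ost(v_U,Q)$. Let $D\bydef f^{-1}(\mathcal{E})$, a point-finite uniform cover of $X$, and let $D'$ be a uniform star-refinement of $D$. Lemma \ref{nerve map}(b) yields a uniformly continuous map $\phi\:X\to|N(D)|^\bullet$ sending each $x$ into the intersection of the interior of $|\fll\Delta_D(x)\flr|^\bullet$ with $|\cel\Delta_{D_{D'}}(x)\cer|^\bullet$.

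The crucial step is to construct a monotone map $\psi\:N(D)\to Q$. For each $\sigma\in N(D)$, pick any $y\in\bigcap\sigma\ne\emptyset$: every $v_V$ with $V\in\sigma$ satisfies $f(y)\in\ost(v_V,Q)$, so $v_V\le\tau$ whenever $\tau\in Q$ is an element whose open star contains $f(y)$. Hence $\{v_V\mid V\in\sigma\}$ has a common upper bound in $Q$, and conditional completeness provides the least such upper bound $\psi(\sigma)\in Q$. Enlarging $\sigma$ only enlarges the supremand, so $\psi$ is monotone. Taking geometric realization yields a uniformly continuous map $g\bydef|\psi|\circ\phi\:X\to|Q|$ factoring through the sub-polyhedron $|\psi(N(D))|\incl|Q|$.

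To conclude, one verifies that $g$ is $\delta$-close (hence $\epsilon$-homotopic) to $f$: at each $x$, both $f(x)$ and $g(x)$ lie in the realization of a common chain of $Q$ whose vertices are $\{v_V\mid V\in\Delta_D(x)\}$ topped by $\psi(\Delta_D(x))$, of diameter at most $\delta/3$ times a universal constant (coming from how the canonical bonding map of Definition \ref{canonical bonding} controls displacements). The main obstacle will be precisely this closeness estimate, which requires tracking the position of $\psi(\sigma)$ relative to the small-diameter set $f(\bigcap\sigma)$ in $|Q|$ and checking compatibility with the star-refinement $D'$. Conditional completeness is indispensable throughout: in a general poset the set $\{v_V\mid V\in\sigma\}$ may admit no canonical common upper bound, destroying both well-definedness and monotonicity of $\psi$ and giving rise globally to the obstructions witnessed by Theorem \ref{Hahn-homotopy}.
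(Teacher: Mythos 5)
Your high-level plan---use conditional completeness to build a monotone ``supremum'' retraction from the nerve of a fine cover back into $Q$, and pull back along the nerve map of Lemma~\ref{nerve map}---is the right instinct, and you correctly identify conditional completeness as the load-bearing hypothesis. But the closeness estimate in your last paragraph is where the argument genuinely breaks, and the break is not a matter of ``tracking details'': as set up, the bound is simply false.

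The problem is that your $\mathcal E$ refines the open stars of vertices of $Q$ \emph{itself}. The open star of an atom of $Q$ can have diameter $1$ (e.g.\ any vertex of a large simplex), so requiring $U\incl\ost(v_U,Q)$ and $\operatorname{mesh}\mathcal E<\delta/3$ places no constraint at all on the distance from $U$ to $v_U$. Consequently the chain $\{v_V\mid V\in\Delta_D(x)\}$ topped by $\psi(\Delta_D(x))$ has a realization of diameter close to $1$, not $\le(\delta/3)\cdot\mathrm{const}$; the skew simplex on a chain $w_0<\dots<w_k$ always has $l_\infty$-diameter $1$. So you cannot conclude $d(f(x),g(x))\le\delta$. (There is also a small error in the line ``so $v_V\le\tau$ whenever $\tau$ is an element whose open star contains $f(y)$'': the $v_V$ are all $\le$ the \emph{maximum} of the carrier chain of $f(y)$, which is what gives the common upper bound; they are not $\le$ every such $\tau$.)

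What the paper does instead is pass to the canonical subdivision $Q^{\#n}$ \emph{before} taking open stars, choosing $n$ so that $2^{-n+1}<\eps$: the cover $C$ of $|Q|$ by open stars of $Q^{\#n}$-vertices has Lebesgue number $\delta\sim 2^{-n-1}$ and, crucially, each $|\fll\Delta_C(f(x))\flr|$ has $d_\infty$-diameter $\le 2^{-n+1}$ in $|Q|$; this is where the smallness comes from, and it is unavailable at the $Q$-level. The retraction is then realized not by a single supremum $N(D)\to Q$ but by the canonical bonding map $\phi^E_D\colon N(E)\to IP(D)^\#$ of Definition~\ref{canonical bonding}, where $E$ is a cover of $X$ by small balls and $D=f^{-1}(C)$; conditional completeness enters via the identity $IP(C)=VD(C)\simeq Q^{\#n}$, which is what allows $\phi^E_D$ to land inside a copy of $Q^{\#n\#}$ and hence to be interpreted as a map to $|Q|$. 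Your $\psi$ is morally the ``$\Delta_D(\bigcap\sigma)$-half'' of this bonding map, but without the subdivision and without the interval (the $\#$), the displacement of $\psi\circ\phi$ from $f$ cannot be made small. To repair your argument you would need to (i) replace ``open stars of vertices of $Q$'' by ``open stars of vertices of $Q^{\#n}$'' with $n$ tied to $\eps$, (ii) map into the interval poset $IP(D)^\#$ rather than into $Q$, and (iii) derive the estimate from the $2^{-n}$-scaling of $h_n$ as in Corollary~\ref{canonical}---which essentially reconstructs the paper's proof.
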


\begin{proof}
Given an $\eps>0$, let $C'$ be the cover of $|Q^{\#n}|$ by the open stars of
vertices, where $2^{-n+1}<\eps$ and $n\ge 1$ so that $Q^{\#n}$ is atomic.
Since $Q$ is conditionally complete, so is $Q^{\#n}$, and therefore $IP(C')=ED(C')\simeq Q^{\#n}$.
Let $h_n\:|Q^{\#n}|\to|Q|$ be the uniform homeomorphism given by
Corollary \ref{canonical}, and let $C=h_n(C')$.
Let $\delta$ be the Lebesgue number of $C$ with respect to the
$d_\infty$ metric on $|Q|$.

Given a metric space $X$ and a $(\gamma,\delta)$-continuous map
$f\:X\to |Q|$ for some $\gamma>0$, let $E$ be the cover of $X$ by
$\frac\gamma4$-balls.
Then $E$ barycentrically refines $D\bydef f^{-1}(C)$.
Let $\Phi$ denote the composition
$$X\xr{\phi_E}|N(E)|\xr{|\phi^E_D|} |IP(D)^\#|\subset |IP(C)^\#|\xr{h}|IP(C)|
\cong |Q^{\#n}|\xr{h_n}|Q|,$$
where $\phi_E$ is the uniformly continuous map given by Lemma \ref{nerve map} and
$\phi^E_D$ is the canonical bonding map.
Given an $x\in X$, by Lemma \ref{nerve map} $\phi_E(x)\in|\fll\Delta_E(x)\flr|$.
By the definition of $\phi^E_D$ we have
$\phi^E_D(\Delta_E(x))=[\Delta_D(\bigcup\Delta_E(x)),\Delta_D(\bigcap\Delta_E(x))]
\subset\fll\Delta_D(\bigcap\Delta_E(x))\flr\subset\fll\Delta_D(x)\flr$.
The latter is identified with $\fll\Delta_C(f(x))\flr$, where $\Delta_C(f(x))$
is an element of $IP(C)\simeq Q^{\#n}$, and it follows that
$\Phi(x)\in h_n(|\fll\Delta_C(f(x))\flr|)$.
Now $|\fll\Delta_C(f(x))\flr|$ has diameter $\le 2$ with respect to the
$d_\infty$ metric on $|Q^{\#n}|$, so $h_n(|\fll\Delta_C(f(x))\flr|)$ has
diameter $\le 2^{-n+1}$ with respect to the $d_\infty$ metric on $|Q|$.
Since this set contains both $\Phi(x)$ and $f(x)$, we infer that
$\Phi$ is $\eps$-close to $f$ with respect to the $d_\infty$ metric on $|Q|$.
\end{proof}

We define a {\it uniform polyhedron} to be the geometric realization of
a (countable) conditionally complete poset.
Theorem \ref{LCU}, Theorem \ref{Hahn}, and \cite[Theorem
\ref{metr:LCU+Hahn}]{M2} have the following

\begin{corollary}\label{CCP-ANR} Uniform polyhedra are uniform ANRs.
\end{corollary}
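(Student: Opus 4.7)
The plan is that this corollary is essentially a synthesis step: all the heavy lifting is already done in the three items cited just above the statement, so my job is simply to verify that their hypotheses are met and to invoke them in the correct order.

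First I would unwind the definition: a uniform polyhedron is by definition $|P|$ for some countable conditionally complete poset $P$. From the constructions in Section \ref{geometric realization} we know $|P|$ is a separable metrizable uniform space (countability of $P$ ensures separability, and it sits inside $[0,1]^{\mathcal P}$ via the standard embedding). So the ambient setting of \cite[Theorem \ref{metr:LCU+Hahn}]{M2} is satisfied.

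Next I would check each of the two conditions that feed into that external criterion. Since $P$ is a countable poset, Theorem \ref{LCU} applies directly and gives that $|P|$ is uniformly locally contractible. Since $P$ is, in addition, conditionally complete, Theorem \ref{Hahn} applies and gives that $|P|$ satisfies the Hahn property. With both hypotheses now verified, \cite[Theorem \ref{metr:LCU+Hahn}]{M2} yields that $|P|$ is a uniform ANR, which is the conclusion.

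Thus there is really no obstacle here beyond bookkeeping: the corollary is a formal consequence of the preceding two results together with the Hanner-type criterion imported from \cite{M2}. The only thing worth flagging explicitly is the role of conditional completeness — it is used exclusively through Theorem \ref{Hahn}, where it was needed to identify $IP(C)$ with $VD(C)\simeq Q^{\#n}$ and thereby produce the approximation $\Phi$; without it, Theorem \ref{Hahn-homotopy} (mentioned in the introduction) shows that the Hahn property, and hence the ANR conclusion, can genuinely fail.
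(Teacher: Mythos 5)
Your proof is correct and follows exactly the paper's own derivation: the corollary is stated as an immediate consequence of Theorem \ref{LCU}, Theorem \ref{Hahn}, and the external criterion \cite[Theorem \ref{metr:LCU+Hahn}]{M2}, and you invoke precisely these three results with the hypotheses verified in the right places. Your closing remark correctly pinpoints that conditional completeness enters only through Theorem \ref{Hahn}.
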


\begin{lemma}\label{equiuniform approximation} For each $\eps>0$ there exist an $n$
and a $\delta>0$ such that for each $\gamma>0$ there exists an $M$ such that for
each $m\ge M$ the following holds.
Let $P$ be a preposet and $Q$ a conditionally complete poset, and $f\:|P|\to |Q|$ be a
$(\gamma,\delta)$-continuous map.
Then there exists a monotone map $g\:P^{\#m}\to Q^{\#n}$ such that the composition
$|P|\xr{h_m^{-1}}|P^{\#m}|\xr{g}|Q^{\#n}|\xr{h_n}|Q|$ is $\eps$-close to $f$.
\end{lemma}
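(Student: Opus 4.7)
The plan is to refine the proof of Theorem \ref{Hahn}, replacing its analytic partition-of-unity map $\phi_E\:X\to|N(E)|$ from Lemma \ref{nerve map} by a natural combinatorial monotone embedding $\iota\:P^{\#m}\hookrightarrow N(E)$, where $E$ is the cover of $|P|$ arising from open stars of atoms of $P^{\#m}$.

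First I would choose $n'\ge 1$ with $2^{-n'+1}<\eps$ and set $n:=n'+1$. Since $Q$ is conditionally complete, so is the atomic poset $Q^{\#n'}$, and by Lemma \ref{VD} (and its corollary) the cover $C:=h_{n'}(C')$ of $|Q|$, where $C'$ is the open-star cover of $|Q^{\#n'}|^\bullet$, satisfies $IP(C)\simeq Q^{\#n'}$; whence $IP(C)^\#\simeq Q^{\#n}$. Let $\delta>0$ be a Lebesgue number of $C$ in the $d_\infty$ metric on $|Q|$. Given $\gamma>0$, set $M:=\bigcl\log_2(3/\gamma)\bigcr+1$: since open stars of atoms in $|P^{\#m}|^\bullet$ have $d_\infty$-diameter $\le 1$ and each step of $h_m\:|P^{\#m}|\to|P|$ scales $d_\infty$ by $1/2$ (Corollary \ref{canonical}), their $h_m$-images form a cover $E$ of $|P|$ of mesh $\le 2^{-m}\le\gamma/3$. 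The star-diameter of $E$ is then $\le\gamma$, and by $(\gamma,\delta)$-continuity of $f$ the cover $E$ star-refines $D:=f^{-1}(C)$.

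Next I would define the desired monotone map $g\:P^{\#m}\to Q^{\#n}$ as the composition of (i) the natural monotone embedding $\iota\:P^{\#m}\to N(E)$, $p\mapsto\{a\in A(P^{\#m})\mid a\le p\}$, using the canonical bijection $A(P^{\#m})\leftrightarrow E$; (ii) the canonical bonding map $\phi^E_D\:N(E)\to IP(D)^\#\incl IP(C)^\#$ from Definition \ref{canonical bonding}; and (iii) the isomorphism $IP(C)^\#\simeq Q^{\#n}$. The map $\iota$ is well defined because the open stars of all atoms $a\le p$ share the common point $\chi_{A(p)}\in|P^{\#m}|^\bullet$.

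Finally, for $\eps$-closeness, fix $x\in|P|$ and let $y:=h_m^{-1}(x)$. Expressing $y$ as a convex combination $\sum t_q\chi_{A(q)}$ for $q$ ranging over a chain of $P^{\#m}$ reveals that $|\iota|(y)$ coincides with $y$ when both are viewed in $[0,1]^{A(P^{\#m})}\cong[0,1]^E$, and in particular lies in $|\fll\Delta_E(x)\flr|^{N(E)}$. Hence $|\iota|\circ h_m^{-1}$ plays the role of $\phi_E$ in the proof of Theorem \ref{Hahn}, so the argument there applies verbatim to show that $h_n(|g|(y))$ and $f(x)$ both lie in a subset of $|Q|$ of $d_\infty$-diameter at most $2^{-n'+1}<\eps$. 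The main obstacle I foresee is the uniformity of $M$ across all preposets $P$: Corollary \ref{canonical} only guarantees that the final step $h\:|P^\#|\to|P|$ in the iteration defining $h_m$ is a uniform homeomorphism (not a strict scaling by $1/2$) when $P$ is a preposet. I expect this is circumvented by observing that $|P|\incl|\langle P\rangle|$ is an isometry in $d_\infty$, reducing the mesh estimate on $|P|$ to the poset case of the transitive closure $\langle P\rangle$; the remaining verifications are routine adaptations of the Hahn proof.
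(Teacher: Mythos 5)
Your proposal is essentially the paper's proof: both replace the partition-of-unity map $\phi_E$ of Lemma \ref{nerve map} with the combinatorial monotone map $P^{\#m}\cong VD(E)\subset N(E)$, compose with the canonical bonding map $\phi^E_D$, and then defer to the argument of Theorem \ref{Hahn}. One small slip: open stars of atoms in $|P^{\#m}|^\bullet$ have $d_\infty$-diameter up to $2$, not $\le 1$ (already visible in $|[2]^\#|^\bullet$), but your extra $+1$ in the choice of $M$ absorbs the lost factor of $2$, so the estimates still close.
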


\begin{proof} Let $2^{-n+1}<\eps$, $n\ge 1$, let $\delta<2^{-n-1}$,
and let $2^{-M+1}<\gamma/4$, $M\ge 1$.

Let $C'$ be the cover of $|Q^{\#n}|$ by the open stars of vertices (using that
$Q^{\#n}$ is atomic due to $n\ge 1$).
Since $C'$ has Lebesgue number $\frac12$ with respect to the usual metric
$d$ on $|Q^{\#n}|$, is also has Lebesgue number $\frac12$ with respect to
the $d_\infty$ metric, due to $d(x,y)\le d_\infty(x,y)$.
Then $C\bydef h_n(C')$ has Lebesgue number $2^{-n-1}$ (and therefore also Lebesgue
number $\delta$) with respect to the $d_\infty$ metric on $|Q|$.

Let $E'$ be the cover of $|P^{\#m}|$ by the open stars of vertices (using that
$P^{\#m}$ is atomic due to $m\ge M\ge 1$).
Then $E'$ refines the cover of $|P^{\#m}|$ by balls of radius $2$ about every vertex
of $P^{\#m}$ with respect to the $d_\infty$ metric on $|P^{\#m}|$.
Hence $E\bydef h_m(E')$ refines the cover of $|P|$ by balls of radius $2^{-m+1}$
(and therefore also that by balls of radius $\gamma/4$) about
all points of $|P|$ with respect to the $d_\infty$ metric on $|P|$.
We note that the composition
$\phi\:|P|\xr{h_m^{-1}}|P^{\#m}|\cong|ED(E)|\subset|N(E)|$
satisfies $\phi(x)\in|\fll\Delta_E(x)\flr|$.

The assertion now follows by the proof of Theorem \ref{Hahn}.
\end{proof}

From the preceding lemma we infer the following canonical version of
Brouwer's simplicial approximation theorem:

\begin{theorem}\label{monotone approximation} For each $\eps>0$ there exists an $n$
such that the following holds.
Let $f\:|P|\to |Q|$ be a uniformly continuous map, where $P$ is a preposet
and $Q$ is a conditionally complete poset.
Then there exists an $M$ such that for each $m\ge M$ there exists
a monotone map $g\:P^{\#m}\to Q^{\#n}$ such that
the composition $|P|\xr{h_m^{-1}}|P^{\#m}|\xr{g}|Q^{\#n}|\xr{h_n}|Q|$ is
$\eps$-close to $f$.
\end{theorem}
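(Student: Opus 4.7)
The plan is to deduce Theorem \ref{monotone approximation} directly from Lemma \ref{equiuniform approximation}, using the uniform continuity of $f$ to supply the equiuniform modulus that the lemma requires.

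Given $\eps>0$, I first apply Lemma \ref{equiuniform approximation} to extract an integer $n$ and a threshold $\delta>0$ that depend only on $\eps$. This integer $n$ is the one that will be output by the theorem; the required quantifier structure (that $n$ depend only on $\eps$, independently of the map $f$) is preserved because the lemma already produces $n$ with exactly this property.

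Now let $f\:|P|\to|Q|$ be an arbitrary uniformly continuous map. Uniform continuity of $f$ supplies some $\gamma>0$, depending on $f$ and on the fixed $\delta$ from the previous step, such that $f$ is $(\gamma,\delta)$-continuous in the sense used by the lemma. Feeding this $\gamma$ into the second quantifier of Lemma \ref{equiuniform approximation} yields the constant $M$. Then for every $m\ge M$, the lemma produces a monotone map $g\:P^{\#m}\to Q^{\#n}$ such that the composition $h_n\circ g\circ h_m^{-1}$ is $\eps$-close to $f$, which is exactly the conclusion of the theorem.

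There is no substantive obstacle here: all the real work is already carried out in Lemma \ref{equiuniform approximation} (and, behind that, in Theorem \ref{Hahn} together with the nerve and canonical bonding map machinery of \S\ref{coverings}). The present theorem is essentially a repackaging of the lemma, trading the quantitative hypothesis of $(\gamma,\delta)$-continuity for the qualitative hypothesis of uniform continuity, at the price that the threshold $M$ is now allowed to depend on $f$ (whereas $n$ still depends only on $\eps$).
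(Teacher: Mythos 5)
Your proof is correct and matches the paper's approach exactly: the paper's proof is literally the single remark ``From the preceding lemma we infer,'' and your argument spells out the precise bookkeeping (fix $n,\delta$ from Lemma \ref{equiuniform approximation} depending only on $\eps$, then use uniform continuity of $f$ to supply a $\gamma$ for which $f$ is $(\gamma,\delta)$-continuous, and feed that $\gamma$ into the lemma to obtain $M$ and then $g$). Nothing is missing.
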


\begin{example}\label{counterexample3} Let $P_1=[2]$ and let $P_{i+1}=P_i+[2]$.
Finally let $P=\bigsqcup_{n\in\N} P_{2^n+1}$.
We claim that $|P|$ does not satisfy the Hahn property (and in particular is not
a uniform ANR).

Indeed, let $Q_n=(P_{2^n+1})^{\#n}$, and let $C_n$ be the cover of $|Q_n|$
by the stars of atoms of $Q_n$.
Then $|ED(C_n)|\cong|Q_n|\cong|P_{2^n+1}|$ is a $2^n$-sphere; but we
shall now show that $|N(C_n)|$ is contractible.

If $K$ is a poset, then $K^{\#n}$ is isomorphic to the poset consisting of
non-decreasing sequences $a=(a_1\le\dots\le a_{2^n})$ of elements of $K$, where
$a\ge b$ iff $a_i\le b_i$ for all odd $i$ and $b_i\le a_i$ for all even $i$.
Such a sequence $a$ represents an atom of $K^{\#n}$ iff $a_i=a_{i+1}$ for
all odd $i$; and a coatom of $K^{\#n}$ iff $a_i=a_{i+1}$ for all even $i<2^n$,
$a_1$ is an atom of $K$ and $a_{2^n}$ is a coatom of $K$.
Thus the atoms of $K^{\#n}$ can be identified with non-decreasing sequences
$a=(a_2\le a_4\le\dots\le a_{2^n-2}\le a_{2^n})$ of elements of $K$, and
the coatoms of $K^{\#n}$ with non-decreasing sequences
$s=(s_1\le s_3\le\dots\le s_{2^n-1}\le s_{2^n+1})$ of elements of $K$, where $s_1$
is an atom and $s_{2^n+1}$ a coatom of $K$; in this notation, $a\le s$ iff
$s_1\le a_2\le s_3\le a_4\le\dots\le a_{2^n}\le s_{2^n+1}$.
If $C$ is the cover of $|K^{\#n}|$ by the open stars of vertices of $K^{\#n}$,
then the vertices of $N(C)$ correspond to the atoms of $K^{\#n}$, and a set of
vertices of $N(C)$ determines a simplex of $N(C)$ iff the corresponding atoms
of $K^{\#n}$ all belong to the cone of some coatom of $K^{\#n}$.

Consider the projection $\pi\:P_i=[2]+\dots+[2]\to[1]+\dots+[1]\simeq [i]$.
By the above, $N(C_n)\subset 2^A$, where $A$ is the set of all non-decreasing
sequences $a=(a_2\le a_4\le\dots\le a_{2^n})$ of elements of $P_{2^n+1}$, and
$N(C_n)$ consists of all $S\subset A$ such that there exists a non-decreasing
sequence $s=(s_1\le s_3\le\dots\le s_{2^n+1})$ of elements of $P_{2^n+1}$, where
$\pi(s_1)=1$ and $\pi(s_{2^n+1})=2^n+1$, and
$s_1\le a_2\le s_3\le a_4\le\dots\le a_{2^n}\le s_{2^n+1}$.

Let $L_k=\{a\in A\mid \pi(a_{2i})\le 2i\text{ for all }i\le k\}$ and
$R_k=\{a\in A\mid \pi(a_{2i})\ge 2i\text{ for all }i\ge 2^{n-1}+1-k\}$.
Thus $A=L_0\supset L_1\supset\dots\supset L_{2^{n-1}}$, and
$A=R_0\supset R_1\supset\dots\supset R_{2^{n-1}}$.
Note that $L_{2^{n-1}}\cap R_{2^{n-1}}=\{a\in A\mid\pi(a_{2i})=2i\text{ for all }i\}$,
which lies in a single simplex, as witnessed by any sequence $s$ with
$\pi(s)=(1\le 3\le\dots\le 2^n+1)$.
For $i=0,\dots,2^{n-1}$ let $N_i$ be the full subcomplex of $N(C_n)$ spanned
by $L_i$, and for $i=2^{n-1}+1,\dots,2^n$ let $N_i$ be the full subcomplex of
$N(C_n)$ spanned by $L_{2^{n-1}}\cap R_{i-2^{n-1}}$.
Thus $N_0=N(C_n)$; on the other hand, since $N_{2^n}$ is a full simplex,
$|N_{2^n}|$ is contractible.
We shall now construct a deformation retraction of $|N_i|$ onto $|N_{i+1}|$
for each $i=0,\dots,2^n-1$.

We first define a retraction $r_k\:L_{k-1}\to L_k$ by $r_k(a)=a$ if $a\in L_k$,
and else by $r_k(a)=b$, where $b_{2i}=a_{2i}$ for $i\ne k$ and
$\pi(b_{2k})=2k$.
(This leaves two possibilities for $b_{2k}$, among which we choose arbitrarily.)
Let $S\in N_{k-1}$ be witnessed by a sequence $s=(s_1\le s_3\le\dots\le s_{2^n+1})$
of elements of $P_{2^n+1}$, where $\pi(s_1)=1$ and $\pi(s_{2^n+1})=2^n+1$.
If $\pi(s_{2k+1})\le 2k$, then all elements of $S$ belongs to $L_k$, and so
$r_k|_S$ is the identity.
Else $\pi(s_{2k+1})\ge 2k+1$, and since all elements of $S$ belong to $L_{k-1}$,
we may assume that $\pi(s_{2k-1})\le 2k-1$ by modifying $s$ if necessary.
Then $r_k(S)$ is a simplex of $N_k$, and furthermore $S\cup r_k(S)$ is a simplex of
$N_{k-1}$, as witnessed by the same sequence $s$.
Thus $r_k$ extends to a simplicial retraction $R_k\:N_{k-1}\to N_k$, and furthermore
we get a simplicial map $H_k\:N_{k-1}*N_{k-1}\to N_{k-1}$ that restricts to
the identity on the first factor and to $R_k$ on the second factor.
It follows that $|R_k|$ is a deformation retraction.

We next similarly define a retraction $r'_k\:R_{k-1}\to R_k$ by $r'_k(a)=a$ if
$a\in R_k$, and else by $r'_k(a)=b$, where $b_{2i}=a_{2i}$ for $i\ne 2^{n-1}+1-k$
and $\pi(b_{2^n+2-2k})=2^n+2-2k$.
(This leaves two possibilities for $b_{2^n+2-2k}$, among which we choose
arbitrarily.)
Then $r'_k(L_{2^{n-1}})\subset L_{2^{n-1}}$, and then the preceding argument
goes through.
This completes the proof that $|N(C_n)|$ deformation retracts onto a contractible
subspace, and therefore is itself contractible.

Suppose that $|P|$ satisfies the Hahn property.
(The following somewhat technical argument can be somewhat simplified by using
Lemma \ref{6.1} below, modulo the proof of that lemma.)
Since $|P|$ is uniformly locally contractible, there exits an $\eps>0$
such that every two $\eps$-close maps into $|P|$ are homotopic with respect to
the usual metric on $|P|$.
Let $D_n$ be the cover of $|P^{\#n}|$ by the open stars of vertices of $P^{\#n}$.
Since $P\simeq ED(D_0)$ is a subposet $N(D_0)$, we have an isometric
embedding $|P|\subset |N(D_0)|$ with respect to the usual metrics.
We recall that the $d_\infty$ metric on $N(D_0)$ is uniformly equivalent
to the usual metric $d$.
Let $d'$ denote the $d_\infty$ metric on $|N(D_0)|$ restricted over $|P|$.
Let $\delta>0$ be such that $\delta$-close points in $|P|$ with respect to $d'$
are $\eps$-close with respect to $d$.
By our assumption there exists a $\gamma>0$ such that for every $\beta>0$,
every $(\beta,\gamma)$-continuous map into $(|P|,d')$ is $\delta$-close
with respect to $d'$ to a continuous map.
Now the $\frac\gamma3$-neighborhood $U$ of $|P|$ in $|N(D_0)|$ with respect to
the $d_\infty$ metric on $|N(D_0)|$ admits a discontinuous,
$(\gamma/3,\gamma)$-continuous retraction onto $(|P|,d')$.
Thus we obtain a continuous map $f\:U\to(|P|,d')$ such that $f|_{|P|}$ is
$\delta$-close to the identity.
Then by the above $f|_{|P|}$ is homotopic to the identity.

Let $n$ be such that $2^{-n}\le\gamma/3$.
Let $R_n=\fll ED(D_0)^{\#n}\flr_{N(D_0)^{\#n}}$.
Then $|R_n|$ lies in the $1$-neighborhood of $|ED(D_0)^{\#n}|$ with respect to
the $d_\infty$ metric on $|N(D_0)^{\#n}|$.
Hence the image of $|R_n|$ under the homeomorphism $|N(D_0)^{\#n}|\cong|N(D_0)|$
lies in the $2^{-n}$-neighborhood of $|P|$ with respect to the
$d_\infty$ metric on $|N(D_0)|$.
Thus we obtain a continuous map $g\:|R_n|\to|P|$ whose restriction over $P^{\#n}$
is homotopic to the homeomorphism $|P^{\#n}|\cong|P|$.
Next, each $D_{i+1}$ barycentrically refines $D_i$, so we have the canonical bonding map
$\phi^{D_{i+1}}_{D_i}\:N(D_{i+1})\to N(D_i)^\#$.
It is easy to see that the composition $P^{\#(n+1)}\simeq
ED(D_{i+1})\xr{\phi} ED(D_i)^{\#n}\simeq P^{\#n}$, where $\phi$ is the
restriction of $\phi^{D_{i+1}}_{D_i}$, is the identity map.
By iterating we obtain a monotone map $\phi_n\:N(D_n)\to N(D_0)^{\#n}$
that extends the composition $ED(D_n)\simeq P^{\#n}\simeq ED(D_0)^{\#n}$.
Since $\phi_n$ is monotone and $N(D_n)=\fll ED(D_n)\flr$, the image of
$\phi_n$ lies in $R_n$.
Thus we obtain a continuous map $h\:|N(D_n)|\to|P|$ whose restriction to
$|ED(D_n)|$ is homotopic to the homeomorphism $|P^{\#n}|\cong|P|$.
In particular, we obtain a continuous map $k\:|N(C_n)|\to|P_{2^n+1}|$
whose restriction to $|ED(C_n)|$ is homotopic to the homeomorphism
$|Q_n|\cong|P_{2^n+1}|$.
This yields a continuous retraction of the $(2^n+1)$-ball onto the
boundary $2^n$-sphere, which is a contradiction.
\end{example}

We note that the poset $P$ in Example \ref{counterexample3} satisfies the
following property (Q): There exist essential maps $e_n\:S^{2^n}\emb|P|$
such that for each $\eps>0$ there exists an $n$, a $\delta>0$ and
a discontinuous, $(\delta,\eps)$-continuous extension of $e_n$ over
$B^{2^n+1}$.
On the other hand, since $|P^\flat|$ is a uniform ANR, $|P|$ is a
non-uniform ANR, and in particular satisfies the non-uniform homotopy
extension property.
It follows that every metrizable uniform space that is uniformly homotopy
equivalent to $|P|$ satisfies (Q) as well.
In particular, using that $|P|$ is uniformly locally contractible, we get
the following

\begin{theorem}\label{Hahn-homotopy}
There exists a countable poset whose geometric realization is not uniformly
homotopy equivalent to a uniform ANR, nor even to a metrizable uniform space
satisfying the Hahn property.
\end{theorem}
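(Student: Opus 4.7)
Take $P$ to be the poset from Example \ref{counterexample3}, so $|P|$ satisfies the property (Q) recorded just before the theorem: there are essential maps $e_n \colon S^{2^n} \emb |P|$ such that for each $\eps > 0$ some $e_n$ has a discontinuous $(\delta,\eps)$-continuous extension $E_n \colon B^{2^n+1} \to |P|$ for a suitable $\delta > 0$. The plan is to show that (Q) is preserved under uniform homotopy equivalence of metrizable uniform spaces, and then that (Q) is incompatible with the Hahn property on the target, using uniform local contractibility of $|P|$; since every uniform ANR has the Hahn property, this also excludes uniform homotopy equivalence with a uniform ANR.

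First I would verify that (Q) passes to any metrizable uniform space $Y$ that is uniformly homotopy equivalent to $|P|$. Let $f \colon |P| \to Y$ and $g \colon Y \to |P|$ be the uniformly continuous maps realizing the equivalence. By Theorem \ref{simplicial ANR}, $|P^\flat|$ is a uniform ANR, hence a non-uniform ANR; since $|P|$ and $|P^\flat|$ share the same underlying topological space, $|P|$ is a non-uniform ANR, and the uniform homotopy $gf \simeq \id_{|P|}$ is in particular a continuous homotopy. Therefore each $gfe_n$ is continuously homotopic to $e_n$ and so essential, whence $fe_n \colon S^{2^n} \to Y$ is essential. For each $\eps > 0$, choose $\eps^* > 0$ so that $\eps^*$-close points in $|P|$ map under $f$ to $\eps$-close points in $Y$, and apply (Q) on $|P|$ with parameter $\eps^*$ to obtain $n$, $\delta > 0$, and $E_n$; then $f \circ E_n \colon B^{2^n+1} \to Y$ is a discontinuous $(\delta, \eps)$-continuous extension of $fe_n$, so $Y$ inherits (Q).

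Next I would derive a contradiction if $Y$ further satisfies the Hahn property. Fix any $\eta > 0$ and let $\delta_\eta > 0$ be the ULC constant for $|P|$ from Theorem \ref{LCU}, so that any two $\delta_\eta$-close uniformly continuous maps into $|P|$ are uniformly (and hence continuously) homotopic. Let $\mu_g$ be a modulus of uniform continuity for $g$, choose $\eps_H > 0$ with $\mu_g(\eps_H) < \delta_\eta$, and invoke the Hahn property on $Y$ to get $\delta_H > 0$ such that for every $\gamma > 0$ every $(\gamma, \delta_H)$-continuous map into $Y$ is $\eps_H$-close to a continuous map. Applying the preserved (Q) on $Y$ with $\eps \le \delta_H$ yields $n$, $\delta > 0$, and a $(\delta, \delta_H)$-continuous extension $E := f \circ E_n \colon B^{2^n+1} \to Y$ of $fe_n$; Hahn then delivers a continuous $\tilde E \colon B^{2^n+1} \to Y$ with $d(E, \tilde E) < \eps_H$. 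The composition $g \tilde E \colon B^{2^n+1} \to |P|$ is continuous, and $g \tilde E|_{S^{2^n}}$ is $\mu_g(\eps_H) < \delta_\eta$-close to $gE|_{S^{2^n}} = gfe_n$; by ULC of $|P|$ these two are continuously homotopic. Composing with the continuous homotopy from $gfe_n$ to $e_n$ gives a continuous homotopy from $g \tilde E|_{S^{2^n}}$ to $e_n$ in $|P|$. But $g \tilde E|_{S^{2^n}}$ is null-homotopic (extending continuously over $B^{2^n+1}$), so $e_n$ is null-homotopic in $|P|$, contradicting its essentiality.

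The main obstacle is orchestrating the uniform continuity moduli so that the combinatorial discontinuous extension furnished by (Q) on $Y$ is bridged to an honest null-homotopy of $e_n$ in $|P|$. The correct order is to first fix a ULC tolerance $\delta_\eta$ on $|P|$, then choose $\eps_H$ below $\mu_g^{-1}(\delta_\eta)$, use Hahn to get $\delta_H$, and only then invoke (Q) with $\eps \le \delta_H$; with these choices the pointwise closeness of $g \tilde E|_{S^{2^n}}$ to $gfe_n$ fits inside the ULC window of $|P|$, while the fixed uniform cost of the homotopy $gf \simeq \id_{|P|}$ is absorbed separately by transitivity of continuous homotopy.
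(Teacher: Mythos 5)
Your argument is correct and follows the same two-step strategy the paper sketches immediately before the theorem: transport property (Q) from $|P|$ to $Y$ along $f$, then use the Hahn property of $Y$ to produce a continuous map $\tilde E$ close to the discontinuous extension, pull it back by $g$ into $|P|$, and use uniform local contractibility of $|P|$ to conclude $e_n$ is null-homotopic. Your version helpfully makes the modulus-of-continuity bookkeeping explicit, and it shows that neither the embedding requirement in (Q) (you only need essential $fe_n$) nor the paper's invocation of the non-uniform homotopy extension property of $|P|$ is actually needed, since the uniform homotopy $gf\simeq\id_{|P|}$ is already continuous and transports essentiality directly.
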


The remainder of this subsection is not used elsewhere in this paper, and could be
of interest primarily to the reader who is looking for a class of posets larger
than conditionally complete posets whose geometric realizations are uniform ANRs.

\begin{definition}[Hereditarity]
We call a cover $D$ of a metric space $M$ {\it hereditarily uniform},
if there exists a $\lambda>0$ such that each $E\subset D$ is a uniform cover
of $\bigcup E$ with Lebesgue number $\lambda$.
Any such $\lambda$ is a {\it hereditary Lebesgue number} of
the hereditarily uniform cover.

We say that a cover $C$ of a set $S$ {\it hereditarily barycentrically refines}
a cover $D$ of $S$ if for every $E\subset D$, the cover $C\cap (\bigcup E)$
of the subset $\bigcup E\subset X$ barycentrically refines the cover $E$ of $\bigcup E$.

If $D$ is a cover of $M$ by sets of diameters $<\eps$, and $C$ is a hereditarily
uniform cover of $M$ with a hereditary Lebesgue number $2\eps$, then clearly $D$
hereditarily barycentrically refines $C$.
\end{definition}

\begin{lemma}\label{5.3} Let $C$ and $D$ be covers of a set $S$.
If $C$ hereditarily barycentrically refines $D$, then $\phi^C_D$ sends $N(C)$
into $ED(D)^\#$.
\end{lemma}

\begin{proof} The hypothesis implies that for every $x\in C$, every $T\subset
\st(x,C)$ such that $x\in T$ satisfies the following property $(*)$: if $T$ lies
in $\bigcup E$ for some $E\subset D$, then it lies in some element of $E$.
In particular, $(*)$ is satisfied by any $T$ of the form $\bigcap\sigma$
or $\bigcup\sigma$, where $\sigma\in N(C)$.
On the other hand, every element of $N(D)$ of the form $\Delta_D(T)$ where
$T$ satisfies $(*)$ clearly belongs to $ED(D)$.
\end{proof}

\begin{definition}[Construction of hereditary uniform covers]
For a finite-dimensional atomic poset $P$ with atom set $\Lambda$, it is easy to
construct a hereditarily uniform cover of $|P|$, namely the cover by the sets
$U_\lambda=\bigcup_{\sigma\ge\lambda}|H_\sigma|$ composed of the barycentric
handles $H_\sigma=\fll\left<\sigma\right>^*\flr\subset (P^\flat)^*$, where
$\sigma\in P$.
The hereditarity is due to the fact that $|H_\sigma|$ and $|H_\tau|$ are uniformly
disjoint when $\sigma$ and $\tau$ are incomparable and $P$ is
finite-dimensional.
This argument does not apply to canonical handles
$h_\sigma=\fll[\sigma,\sigma]^*\flr\subset (P^\#)^*$ because $h_\sigma\cap h_\tau$
can be nonempty when $\sigma$ and $\tau$ are incomparable.

Clearly, the preimage of a hereditarily uniform cover under a uniformly
continuous map of metrizable uniform spaces is hereditarily uniform.
Hence by using Lemma \ref{nerve map}, we infer that every uniform cover of
a uniformly finitistic metrizable uniform space admits a hereditarily
uniform refinement (in fact, one of a finite multiplicity).
We conjecture that the hypothesis of uniform finitisticity cannot
be dropped here.
\end{definition}

\begin{definition}[Weak hereditarity]
We call a cover $D$ of a metric space $M$ {\it weakly hereditarily uniform},
if there exists a $\lambda>0$ such that for every $F\subset D$
satisfying $\bigcap F\subset\bigcup (D\but F)$, the cover $(D\but F)\cap(\bigcap F)$
of the subset $\bigcap F\subset X$ is a uniform cover of $\bigcap F$ with
Lebesgue number $\lambda$.
Any such $\lambda$ is called a {\it weak hereditary Lebesgue number} of
the weakly hereditarily uniform cover.
A hereditarily uniform cover is weakly hereditarily uniform by considering
$E=D\but F$; and a weakly hereditarily uniform cover is uniform by considering
$F=\emptyset$ (in which case $\bigcap F=M$).

We say that a cover $C$ of a set $S$ {\it weakly hereditarily barycentrically refines}
a cover $D$ of $S$ if for every $F\subset D$ satisfying
$\bigcap F\subset\bigcup (D\but F)$, the cover $C\cap(\bigcap F)$ of the subset
$\bigcap F\subset X$ barycentrically refines the cover $(D\but F)\cap(\bigcap F)$ of
$\bigcap F$.
Similarly to the above,

\begin{center}
{\small hereditary bar.\ refinement \imp\ weak hereditary bar.\ refinement \imp\ bar.\ refinement.}
\end{center}

It is easy to see that if $D$ is a cover of $M$ by sets of diameters $<\eps$,
and $C$ is a weakly hereditarily uniform cover of $M$ with a weak hereditary Lebesgue
number $2\eps$, then $D$ weakly hereditarily barycentrically refines $C$.

Beware that the preimage of a weakly hereditarily uniform cover under a uniformly
continuous map $f$ of metrizable uniform spaces need not be weakly hereditarily
uniform, because $\bigcap F\not\subset\bigcup (D\but F)$ does not imply
$f^{-1}(\bigcap F)\not\subset f^{-1}(\bigcup (D\but F))$.
\end{definition}

The proof of Lemma \ref{5.3} works to establish

\begin{lemma}\label{5.3'} Let $C$ and $D$ be covers of a set $S$.
If $C$ weakly hereditarily barycentrically refines $D$, then $\phi^C_D$ sends $N(C)$ into
$ED(D)^\#$.
\end{lemma}

\section{Approximation of spaces}

\begin{theorem}\label{inverse limit} Every separable metrizable complete uniform
space $X$ is the limit of a convergent inverse sequence of geometric realizations of
simplicial complexes $P_i$ and uniformly continuous maps.

If $X$ is star-finite or uniformly finitistic, or both, then each $P_i$
can be chosen to be locally compact, or finite dimensional, or both (respectively).
\end{theorem}

Theorem \ref{inverse limit} follows from \cite[Theorem \ref{metr:A.7}]{M2}
along with the following

\begin{lemma}\label{6.1} Let $\{C_n\}$ be a basis of the uniformity of
a metrizable complete uniform space $X$, where each uniform cover $C_n$ is
countable and point-finite.
Let $N_i=|N(C_i)|$, and let $p_i\:N_{i+1}\to N_i$ be the composition
$|N(C_{i+1})|\xr{|\phi^{C_{i+1}}_{C_i}|}|N(C_i)^\#|\xr{h}|N(C_i)|$, where
$\phi^{C_{i+1}}_{C_i}$ is the canonical bonding map, and $h$ is the
uniform homeomorphism.
Then the inverse sequence $\dots\xr{p_1}N_1\xr{p_0}N_0$ is convergent,
and its limit $L$ is uniformly homeomorphic to $X$.
\end{lemma}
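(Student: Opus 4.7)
The plan is to construct a quasi-compatible system of uniformly continuous maps $f_i \: X \to N_i$ and then invoke \cite[Theorem \ref{metr:A.7}]{M2} to identify $X$ with $L$. By passing to a cofinal subsequence, which changes neither $L$ nor the uniformity it carries, I may assume that each $C_{i+1}$ star-refines $C_i$. I then apply Lemma \ref{nerve map}(b) with $D = C_{i+1}$ to obtain uniformly continuous maps $f_i := \phi_{C_i, C_{i+1}} \: X \to N_i$.

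The first step is to compare $p_i \circ f_{i+1}$ with $f_i$. Given $x \in X$, the point $f_{i+1}(x)$ lies in $|\fll\Delta_{C_{i+1}}(x)\flr|$, and by the definition of the canonical bonding map $\phi^{C_{i+1}}_{C_i}$ (together with the remark following it), this simplex is sent into the cube of $N(C_i)^\#$ spanned by the interval $[\Delta_{C_i}(\bigcup\Delta_{C_{i+1}}(x)), \Delta_{C_i}(\bigcap\Delta_{C_{i+1}}(x))]$. Since $x$ belongs to $\bigcap\Delta_{C_{i+1}}(x)$, the upper endpoint of this interval is dominated by $\Delta_{C_i}(x)$, and applying the uniform homeomorphism $h$ yields $p_i f_{i+1}(x) \in |\fll\Delta_{C_i}(x)\flr|$. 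This is precisely the simplex of $N_i$ whose interior already contains $f_i(x)$ by Lemma \ref{nerve map}(b). The straight-line homotopy between $f_i$ and $p_i f_{i+1}$ therefore stays inside a single simplex of $N_i$ at every point of $X$, is uniformly continuous, and is subordinated to $C_i$.

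The second step is to package this quasi-compatibility into a genuine map $f \: X \to L$ via \cite[Theorem \ref{metr:A.7}]{M2}. Once $f$ is produced, uniform openness of the induced map is automatic: by the note after Lemma \ref{nerve map}, $f_i^{-1}$ of the open star of a vertex $\{U\}$ of $N(C_i)$ is exactly $U$, and since $\{C_i\}$ is a basis of the uniformity of $X$ this shows that $f$ is a uniform embedding. Completeness of $X$ makes $f(X)$ closed in $L$, while density of $f(X)$ can be read off from the construction, yielding surjectivity.

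The hard part is this last step. The composites $p_i \circ f_{i+1}$ do not strictly equal $f_i$, so $(f_i)$ does not literally induce a map to the inverse limit. One must convert the uniformly small intra-simplicial homotopies produced above into strict compatibility in the limit; this is exactly the delicate point where Isbell's \cite[Lemma V.33]{I3} contained the minor error mentioned in the introduction, and where the convergence mechanism packaged in \cite[Theorem \ref{metr:A.7}]{M2} is essential.
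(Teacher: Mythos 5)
Your outline and the paper's proof diverge at the central construction, and the divergence matters. The paper never constructs a quasi-compatible system of maps $f_i\:X\to N_i$ via Lemma \ref{nerve map}(b) and never has to ``straighten'' it; instead it defines $\lambda\:X\to L$ pointwise, using exactly the nesting you observe but do not exploit: since $\phi^{C_{i+1}}_{C_i}$ sends $s_{i+1}(x):=\fll\Delta_{C_{i+1}}(x)\flr$ into $s_i(x)^\#$, the compact sets $|s_i(x)|$ are nested under the $p_i$; and since each $p_i$ is $\tfrac12$-Lipschitz in the $d_\infty$ metrics, the diameters of $p^{i+n}_i(|s_{i+n}(x)|)$ tend to zero, so the inverse limit of the $|s_i(x)|$ is a single point $\lambda(x)\in L$. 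This sidesteps the compatibility problem entirely. Your appeal to \cite[Theorem \ref{metr:A.7}]{M2} to ``package the quasi-compatibility'' is a misattribution: judging from its other uses in the paper, that theorem is a cover-refinement statement (every uniform cover of a separable metric space is refined by a countable point-finite one), used to \emph{deduce} Theorem \ref{inverse limit} from Lemma \ref{6.1}, not the reverse, and it supplies no device for upgrading maps that are only uniformly $\eps$-compatible into a map to the inverse limit.

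Surjectivity is also not something that can be ``read off from the construction.'' In the paper this is the longest part of the argument and is precisely the place where Isbell's \cite[Lemma V.33]{I3} had its gap: given a thread $(q_i)\in L$, one takes the minimal simplices $\sigma_n$ with $q_n\in|\sigma_n|$, shows $\sigma_n=\Delta_{C_n}(\bigcap\sigma_{n+1})$, forms the closures $S_n=\overline{\bigcap\sigma_n}$ (noting these are nonempty, in contrast to Isbell's filter of $\bigcap\sigma_n$ which could consist of empty sets), shows the nested sequence $S_n$ is Cauchy and hence has a nonempty singleton intersection $\{q\}$ by completeness of $X$, and then verifies $\lambda(q)=(q_i)$ via an analysis of which closed simplicial stars $\lambda_n(q)$ must lie in. Closedness of $\lambda(X)$ from completeness is fine, but density is not a triviality here; your claim that it ``can be read off'' leaves the proof incomplete exactly where the published argument needed the most care.
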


Though our notion of geometric realization and our bonding maps are quite
different from those used by Isbell, our proof of Lemma \ref{6.1} is
modelled on Isbell's proof of what amounts to its uniformly finitistic case 
\cite[Lemma V.33]{I3}.%
\footnote{There is a minor error in the proof of step (2) in
\cite[Lemma V.33]{I3}, as the Cauchy filter base considered there might
consist entirely of the empty sets.
This can be remedied as shown in the last paragraph of our proof.}

\begin{proof} Let $s_i(x)$ denote the simplex $\fll\Delta_{C_i}(x)\flr$.
Then each $f_i\bydef \phi^{C_{i+1}}_{C_i}$ sends $s_{i+1}(x)$ into $s_i(x)^\#$ for each
$x\in X$.
Hence each $p_i$ sends $|s_{i+1}(x)|$ into $|s_i(x)|$.
Since $f_i$ is monotone, $|f_i|$ is $1$-Lipschitz with respect to the $d_\infty$
metrics, and therefore $p_i$ is $\frac12$-Lipschitz with respect to the $d_\infty$
metrics.
Since the diameter of $|s_i(x)|$ is bounded above by $2$ in the $d_\infty$ metric,
the diameter of $p^{i+n}_i(|s_{i+n}(x)|)$ is bounded above by $2^{1-n}$.
Since each $|s_i(x)|$ is compact, their inverse limit (with the restrictions of
$p_i$ as the bonding maps) is nonempty, and by the above it has zero diameter.
Thus it is a single point $\lambda(x)\in L$.

Each $N_i$ is the union of the $|s_i(x)|$ over all $x\in X$, and every $|s_i(x)|$
contains $p^\infty_i(\lambda(x))$.
Hence every point of $p^{i+n}_i(N_{i+n})$ is $2^{1-n}$-close to a point of
$p^\infty_i(L)$.
Thus the inverse sequence is convergent (see
\cite[Lemma \ref{metr:A.12}(c)]{M2}).

To see that $\lambda\:X\to L$ is uniformly continuous, it suffices to show
that every its coordinate $\lambda_i\:X\xr{\lambda}L\xr{p^\infty_i}N_i$
is uniformly continuous.
Indeed, for each $x\in X$ and each $j\ge i$ we have
$\lambda_j(x)\in |s_j(x)|=|\fll\Delta_{C_j}(x)\flr|$.
For each $V\in C_j$, every $x\in V$ satisfies $V\in\Delta_{C_j}(x)$.
Hence $\lambda_j(V)\subset |\st(\{V\},N(C_j))|$.
The diameter of $|\st(\{V\},N(C_j))|$ is bounded above by $4$, hence its image
under $p^j_i$ has diameter at most $2^{2-(j-i)}$.
Thus for each $\eps>0$ there exists a $j\ge i$ such that
$\lambda_i(C_j)=p^j_i\lambda_j(C_j)$ refines the cover of $N_i$ by $\eps$-balls.
Since $\{C_j\}$ is a fundamental sequence of covers of $X$, we infer that
$\lambda_i$ is uniformly continuous.

Next, given $x,y\in X$ at a distance $\eps>0$, there exists an $n=n(\eps)$
such that any two elements of $C_n$ containing $x$ and $y$ are disjoint.
Then $\lambda_n$ sends $x$ and $y$ into disjoint closed simplices
$|s_n(x)|$ and $|s_n(y)|$ of $N_n$.
It follows that $\lambda$ is injective and, using the uniform continuity of
each $p^\infty_n$, that $\lambda^{-1}$ is uniformly continuous.

Finally, if $(q_i)\in L$ is a thread of $q_i\in N_i$, and
$\sigma_n$ is the minimal simplex of $N(C_n)$ such that $q_n\in|\sigma_n|$, then
$f_n(\sigma_{n+1})\subset\sigma_n^\#$, moreover, $\sigma_n$ is the minimal
simplex of $N(C_n)$ satisfying the latter property.
Hence $\sigma_n=\Delta_{C_n}(\bigcap\sigma_{n+1})$; in particular,
$\bigcap\sigma_{n+1}\subset\bigcap\sigma_n$.
Let $S_n$ be the closure of $\bigcap\sigma_n$.
Then $S_n$ lies in the closure of an element of $C_n$.
Since $\{C_i\}$ is a basis of the uniformity of $X$, for each $\eps>0$
there exists an $n$ such that every element of $C_n$ is of diameter
at most $\eps$.
It follows that the inverse sequence $\dots\subset S_1\subset S_0$ is Cauchy
(see \cite{M2}*{Lemma \ref{metr:A.12}(d)}).
Since $X$ is complete, so are the $S_i$'s, hence $\dots\subset S_1\subset S_0$ is
convergent (see \cite{M2}*{Lemma \ref{metr:A.12}(b)}) and therefore
$\bigcap S_i$ is nonempty (see \cite{M2}*{Lemma \ref{metr:A.12}(f)}).
Since the diameters of $S_i$ tend to zero, $\bigcap S_i$ must be a single
point $q$.
Now $q$ lies in the closure of $\bigcap\sigma_n$, and each
$q'\in\bigcap\sigma_n$ satisfies $\sigma_n\subset s_n(q')$ and
$\lambda_n(q')\in |s_n(q')|$.
Since $\lambda_n$ is continuous, $\lambda_n(q)$ lies in the closed subset
$|\fll\cel\sigma_n\cer\flr|$ of $|N(C_n)|$.
Hence $s_n(q)\subset\fll\cel\sigma_n\cer\flr$, or equivalently
$\sigma_n\subset\fll\cel s_n(q)\cer\flr$.
Since $\lambda(q)$ is also the inverse limit of the simplicial
neighborhoods $|\fll\cel s_n(q)\cer\flr|$, we conclude that
$\lambda(q)=(q_i)$.
Thus $\lambda$ is surjective.
\end{proof}

\begin{remark} Given simplicial complexes $K_i$ and monotone maps
$f_i\:K_{i+1}\to K_i^\#$, it is easy to see (by analyzing the definition
of a countable product of uniform spaces, see \cite{M2}) that a base of
uniformity of $\invlim|K_i|$ is given by the preimages of the covers of
$|K_i|$'s by the open stars of vertices.
It follows, for instance, that $\id\:\invlim|K_i|\to\invlim|K_i^{\#n}|$
is $2^n$-Lipschitz.
The best that can be said of the inverse is that it is $1$-Lipschitz.
\end{remark}

Similarly to \cite[proof of 7.2(ii)]{I2}, one deduces from Theorem
\ref{inverse limit} the following

\begin{corollary} \label{inverse limit2}
If $\{C_\alpha\}$ is a basis of uniform covers of a separable complete uniform
space $X$, where each $C_\alpha$ is countable and point-finite, then $X$ is
a (non-sequential) inverse limit of inverse limits of uniformly continuous maps
between $|N(C_\alpha)|$.
\end{corollary}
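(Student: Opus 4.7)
The plan is to combine Lemma \ref{6.1} with a non-sequential bundling over the index set $A := \{\alpha\}$, exploiting that $X$, being separable and metrizable, has a countable basis of uniformity.

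First, I would extract a countable cofinal subfamily $\{C_{\beta_n}\}_{n \in \N}$ of the given basis $\{C_\alpha\}_{\alpha \in A}$. For each $\alpha \in A$, recursively choose $\alpha(0) = \alpha$ and then $\alpha(i+1) \in A$ such that $C_{\alpha(i+1)}$ star-refines $C_{\alpha(i)}$ and refines $C_{\beta_i}$; this is possible because $\{C_\alpha\}$ is a basis of uniform covers and every uniform cover admits a star-refinement. The resulting sequence $\{C_{\alpha(i)}\}_i$ is then itself a countable basis of the uniformity of $X$, drawn from $\{C_\alpha\}$, with $C_{\alpha(i+1)}$ star-refining $C_{\alpha(i)}$ at every step.

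Lemma \ref{6.1} applied to $\{C_{\alpha(i)}\}_i$ then asserts that the sequential inverse limit $L_\alpha$ of $\dots\to|N(C_{\alpha(1)})|\to|N(C_{\alpha(0)})|=|N(C_\alpha)|$, with canonical bonding maps, is convergent and is uniformly homeomorphic to $X$ via a canonical map $\lambda_\alpha\:X\to L_\alpha$. By construction, each $L_\alpha$ is a sequential inverse limit of uniformly continuous maps between realizations $|N(C_\beta)|$ drawn from the given basis, with $|N(C_\alpha)|$ sitting as its $0$th term.

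Finally, I would turn $A$ into a directed preorder by declaring $\beta\preceq\alpha$ iff $C_\beta$ refines $C_\alpha$ (directedness follows from the basis property), and define bonding maps $q_{\alpha\beta}:=\lambda_\alpha\circ\lambda_\beta^{-1}\:L_\beta\to L_\alpha$ for $\beta\preceq\alpha$. These are uniform homeomorphisms satisfying the obvious composition identities, so $(L_\alpha,q_{\alpha\beta})$ is a non-sequential inverse system, and the $\lambda_\alpha$'s cohere into a uniform homeomorphism $X\to\invlim_{\alpha\in A}L_\alpha$, exhibiting $X$ as a non-sequential inverse limit of the inner inverse limits $L_\alpha$. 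The main subtlety is the opening selection of a star-refining cofinal chain inside the given basis; this is where separability is used crucially, and once accomplished the remainder is a formal packaging of Lemma \ref{6.1}.
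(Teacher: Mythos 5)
Your proposal is correct and is essentially the argument the paper intends (the paper cites Isbell's proof of 7.2(ii) without further detail): for each $\alpha$ you pick a star-refining cofinal sequence in $\{C_\alpha\}$ starting at $C_\alpha$, apply Lemma \ref{6.1} to obtain a sequential inverse limit $L_\alpha\cong X$, and bundle these over the refinement preorder on the index set with the uniform homeomorphisms $\lambda_\alpha\lambda_\beta^{-1}$ as outer bonding maps. The statement does not require the outer bonding maps to avoid routing through $X$ (they could also be built level-wise from canonical bonding maps on interleaved subsequences), so your packaging of Lemma \ref{6.1} suffices.
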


\begin{theorem} \label{inverse limit-general}
Every separable complete uniform space is the limit of an inverse spectrum of
uniform polyhedra and uniformly continuous maps.
\end{theorem}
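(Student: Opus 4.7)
The plan is to bootstrap from Corollary \ref{inverse limit2}, which already exhibits an arbitrary separable complete uniform space $X$ as a non-sequential inverse limit of inverse sequences of the form supplied by Lemma \ref{6.1}. First I would note that, since $X$ is separable, its uniformity admits a basis $\{C_\alpha\}_{\alpha \in A}$ of countable point-finite uniform covers (a standard fact going back to Isbell). For each such $\alpha$, the nerve $N(C_\alpha)$ is a countable simplicial complex, so its geometric realization $|N(C_\alpha)|$ is a uniform polyhedron by Corollary \ref{CCP-ANR}, the poset of nonempty faces of a simplicial complex being conditionally complete. Applying Corollary \ref{inverse limit2}, we thus present $X$ as a non-sequential inverse limit of inverse sequences all of whose terms are uniform polyhedra: the outer level is indexed by $A$ and each inner sequential level comes from Lemma \ref{6.1}.

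The next step is to merge the nested inverse limit into a single inverse spectrum. I would index the combined system by pairs $(\alpha, n)$, where for each $\alpha$ the inner sequence $\ldots \to |N(C_{\alpha,1})| \to |N(C_{\alpha,0})|$ is furnished by Lemma \ref{6.1}, and declare $(\alpha, n) \leq (\beta, m)$ whenever $C_{\beta, m}$ (iteratively) star-refines $C_{\alpha, n}$. By closing the basis under finite common star-refinements --- which preserves countability and point-finiteness and is always possible in a separable uniform space --- any finite collection of indices gets a common upper bound, so this partial order becomes a directed set. The bonding maps are the compositions of canonical bonding maps $|\phi^{C}_{D}|$ of Definition \ref{canonical bonding} with the uniform homeomorphism $|N(D)^\#| \cong |N(D)|$ of Corollary \ref{canonical}.

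Finally, the limit of this merged spectrum is uniformly homeomorphic to $X$: the joint inverse limit over the directed set of pairs $(\alpha, n)$ agrees, by the standard interchange of inverse limits (valid whenever the two indexings are cofinally compatible), with $\invlim_\alpha \invlim_n |N(C_{\alpha, n})|$, which equals $X$ by Corollary \ref{inverse limit2}. The main technical obstacle is the organization of the bonding maps: the canonical bonding maps $\phi^C_D$ do not compose strictly, since $\phi^C_D$ lands in $N(D)^\#$ rather than in $N(D)$. Upgrading the diagram to a genuine inverse spectrum in the uniform category requires systematic use of Corollary \ref{canonical} to absorb the iterated $\#$ operations, together with careful passage to cofinal subsets, so that composite bonding maps remain uniformly continuous maps between the prescribed terms without altering the uniform homeomorphism type of the limit.
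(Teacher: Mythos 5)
Your approach is genuinely different from the paper's, and as written it has a gap that the paper's route is designed precisely to avoid.

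The crux is commutativity. An inverse spectrum over a directed set $D$ requires bonding maps $p^{d''}_d = p^{d'}_d \circ p^{d''}_{d'}$ to hold \emph{on the nose} for all $d\le d'\le d''$; this is indispensable for the inverse limit to be well defined and for the cofinality/interchange argument you invoke at the end. For a sequential inverse limit, as in Theorem~\ref{inverse limit}, one simply picks one bonding map per consecutive pair and nothing more needs to be checked, but in the directed setting the constraint is global. The canonical bonding maps $\phi^C_D$ of Definition~\ref{canonical bonding} do not compose strictly: if $C$ star-refines $D$ star-refines $E$, then the composite $(\phi^D_E)^\#\circ\phi^C_D\colon N(C)\to N(E)^{\#\#}$, after absorbing the iterated $\#$'s via Corollary~\ref{canonical}, is in general a different map from $\phi^C_E\colon N(C)\to N(E)^\#$; they agree only up to a monotone homotopy (this is essentially the content of the Proposition following Example~\ref{bad bonding map}). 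You identify the ``obstacle'' only as the cosmetic issue that $\phi^C_D$ lands in $N(D)^\#$ rather than $N(D)$, and suggest it is resolved by ``systematic use of Corollary~\ref{canonical}'' plus cofinal subsets; but absorbing subdivisions is pure bookkeeping and does not repair the failure of the cocycle condition. A secondary issue: you claim that separability of $X$ guarantees a basis of countable point-finite uniform covers as ``a standard fact going back to Isbell''; in the paper this is stated only as an explicit \emph{hypothesis} of Corollary~\ref{inverse limit2}, not a consequence of separability, and the argument you are bootstrapping from therefore does not immediately apply to all separable complete uniform spaces.

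The paper sidesteps both problems with a different decomposition. It embeds $X$ (complete, separable, but possibly non-metrizable) in a product $\prod_i M_i$ of separable complete metric spaces, refines each $M_i$ by Theorem~\ref{inverse limit} to embed it in a countable product of geometric realizations of simplicial complexes, and thus embeds $X$ as a closed subspace of a product $\prod_j|K_j|$ of uniform polyhedra. Since a product is the strictly commuting inverse limit of its finite subproducts, and $X$ is closed, $X$ is the inverse limit of its images in the finite subproducts; each finite subproduct $|K_{j_1}\x\dots\x K_{j_r}|$ is a uniform polyhedron by Theorem~\ref{3.1}(a), and passing to the minimal subcomplexes of iterated canonical subdivisions containing the image of $X$ gives a spectrum of uniform polyhedra with strictly commuting projections. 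No homotopy-coherence problem ever arises, and as a bonus the polyhedra can be taken to be cubohedra. To salvage your nerve-based route you would have to build, by hand, a strictly commuting system of nerve bonding maps over a directed set — which is exactly the delicate construction the product argument is designed to render unnecessary.
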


The following argument is an elaboration on \cite[7.2(i)]{I2}.
It shows that the uniform polyhedra can be chosen to be geometric realizations
of cubical complexes, and more specifically cubohedra in the sense of \cite{M2}.

\begin{proof}
It is well-known that every uniform space $X$ embeds in an (uncountable) product
of complete metric spaces $M_i$ \cite[II.14, II.15]{I3}.
If $X$ is separable, we may assume that so is each $M_i$, by considering
the closures of the images of $X$ in the $M_i$'s.
Then by Theorem \ref{inverse limit}, each $M_i$ in turn embeds in a
product of uniform polyhedra.
Thus $X$ can be identified with a subspace of a product of uniform polyhedra
$|K_j|$.

Since $\prod |K_j|$ is the inverse limit of finite subproducts, and $X$ is closed
in $\prod |K_j|$, it is the inverse limit of its images in the finite
subproducts, cf.\ \cite[IV.34]{I3}.
Each finite subproduct is a uniform polyhedron $|K_{j_1}\x\dots\x K_{j_r}|$.
Let $P_{n;\,j_1\dots j_r}$ be the minimal subcomplex of
$(K_{j_1}\x\dots\x K_{j_r})^{\#n}$ such that $|P_{n;\,j_1\dots j_r}|$ contains
the image of $X$.
Then it is shown similarly to \cite[proof of Theorem 10.1]{ES} that $X$
is the inverse limit of all the $|P_{n;\,j_1\dots j_r}|$.
\end{proof}

\begin{theorem}\label{domination} A separable metric space is
a uniform ANR if and only if it is uniformly $\eps$-homotopy dominated by
the geometric realization of a simplicial complex, for each $\eps>0$.
\end{theorem}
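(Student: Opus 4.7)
The argument splits into the two implications. The \emph{if} direction is immediate: by Theorem \ref{simplicial ANR}, the geometric realization of any simplicial complex is a uniform ANR, and a separable metrizable uniform space that is uniformly $\eps$-homotopy dominated by uniform ANRs for every $\eps>0$ is itself a uniform ANR. The latter closure property is essentially the content of the uniform Hanner criterion \cite[Theorem \ref{metr:Hanner}]{M2}; combined with Theorem \ref{simplicial ANR}, it gives the implication.

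For the \emph{only if} direction, suppose $X$ is a uniform ANR. The plan is to produce, for each $\eps>0$, a simplicial complex $K$ and uniformly continuous maps $u\:X\to|K|$ and $d\:|K|\to X$ with $d\circ u$ uniformly $\eps$-homotopic to $\id_X$. The starting point is the uniform embedding of $X$ as a closed subset of some ambient uniform ANR $E$ (e.g.\ a $c_0$-type space, as used in \cite{M2}) together with a uniform neighborhood retraction $r\:U\to X$. Given $\eps>0$, the uniform continuity modulus of $r$ dictates a $\delta>0$, and I would choose a countable point-finite uniform cover $C$ of $X$ by open sets of mesh less than $\delta$ such that $C$ admits a uniform star-refinement. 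Then $K:=N(C)$ is a simplicial complex by \S\ref{coverings}, and Lemma \ref{nerve map}(a) supplies the uniformly continuous nerve map $u:=\phi_C\:X\to|K|$.

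To construct $d$, I would pick a point $x_V\in V$ for each $V\in C$ and extend the vertex assignment $\{V\}\mapsto x_V$ affinely on every convex hull of a chain of $K$, using the linear structure of $E$. For $\delta$ small enough, the resulting map $\tilde d\:|K|\to E$ takes values in $U$ (because points of a chain of $C$ share a common point of $X$, so their chosen representatives lie in a uniformly small subset of $E$ around $X$), so $d:=r\circ\tilde d\:|K|\to X$ is defined. The linear homotopy in $E$ between $\tilde d\circ u$ and the inclusion $X\emb E$, both valued in $U$ for small $\delta$, followed by $r$, gives a uniformly continuous $\eps$-homotopy $d\circ u\simeq\id_X$.

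The main obstacle is to ensure that $\tilde d$, and hence $d$, is genuinely uniformly continuous with respect to the uniformity on $|K|$ defined in \S\ref{geometric realization}: on each convex hull of a chain the map is $1$-Lipschitz by construction, but uniform continuity across the whole realization is not automatic in the infinite-dimensional case --- this is exactly the difficulty that motivated the theory of the present paper. The path-metric/quotient description of the uniformity furnished by Theorem \ref{CW}(a,b), together with the estimate that $x_V$ and $x_W$ are close in $E$ whenever $V\cap W\ne\emptyset$ (so that $\tilde d$ satisfies a $d_\infty$-type Lipschitz bound chain-by-chain), allows one to verify uniform continuity via the quotient uniformity argument of Theorem \ref{CW}. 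The entire argument is parallel to the cubohedral version of the result in \cite[Corollary \ref{metr:Hanner2} and Theorem \ref{metr:intersection of cubohedra2}]{M2}, with Theorem \ref{simplicial ANR} and the nerve machinery of \S\ref{coverings} playing the roles that cubohedra and uniform neighborhoods in $c_0$ played there.
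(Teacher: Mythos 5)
The ``if'' direction of your proposal is fine and matches the paper, which cites Hanner's criterion for uniform ANRs.

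The ``only if'' direction is where the gap lies, and you have put your finger on it without resolving it. The affine extension $\tilde d\colon|K|\to E$ of the vertex assignment $\{V\}\mapsto x_V$ is precisely the kind of partition-of-unity-style map whose failure in the infinite-dimensional $l_\infty$-metric setting is the raison d'\^etre of this paper. Your claim that $\tilde d$ is ``$1$-Lipschitz on each convex hull of a chain by construction'' is not correct: if a chain in $N(C)$ has length $n$, the affine extension restricted to the corresponding skew simplex has Lipschitz constant that grows with $n$ (roughly $n$ times the diameter of the set of representatives $\{x_V\}$ appearing in the chain), because the atomic $l_\infty$-coordinates must be converted into barycentric coordinates, and the conversion loses a factor proportional to the chain length. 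Hence lifting $\tilde d$ to $|K_\sqcup|$ and invoking Theorem \ref{CW}(a) does not yield uniform continuity --- the lifted map is not uniformly continuous on the disjoint union of chains, which is exactly the hypothesis the quotient-uniformity argument needs. Compare Example \ref{counterexample}, where the same phenomenon is responsible for the failure of the naive generalized realization to be a uniform ANR.

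The paper circumvents this obstruction in two distinct ways, neither of which attempts a continuous affine extension. The primary proof is a wholesale change of strategy: it replaces $X$ by its completion (via \cite[Theorem \ref{metr:uniform ANR}]{M2}), represents the completion as a convergent inverse limit of realizations of simplicial complexes using Theorem \ref{inverse limit}, and then extracts the $\eps$-domination from the mapping telescope retraction of \cite[Corollary \ref{metr:telescope retraction}]{M2}. The elementary alternative (the first sentence of the paper's proof combined with the proof of Theorem \ref{domination-general}) is closer in spirit to what you attempt, but with one decisive difference: instead of building a continuous map $|N(E)|\to X$ by affine extension, it builds a \emph{discontinuous} map $\phi$ by picking an arbitrary point of $f^{-1}(g^{-1}(U))$ for each $x$, observes that $\phi$ is controlled at the level of covers (its preimage of a suitable uniform cover is uniform), and then invokes the Hahn property of the uniform ANR $X$ to replace $\phi$ by a nearby uniformly continuous map $\psi$. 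The Hahn property is doing exactly the work your affine extension cannot do; without it, or without the inverse-limit/telescope machinery, I do not see how to close the gap. If you want to salvage your outline, replace the affine $\tilde d$ by the discontinuous point-picker and then appeal to the Hahn property of $X$ as in Theorem \ref{domination-general}.
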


The ``if'' direction follows from \cite[Corollary \ref{metr:Hanner}]{M2}.
The following proof of the ``only if'' direction is based on
Theorem \ref{inverse limit}; the reader who feels that this is an overkill
can get a more elementary argument by combining the first sentence of this
proof with the proof of Theorem \ref{domination-general} below.

\begin{proof}[Proof, ``only if'']
By \cite[Theorem \ref{metr:uniform ANR}]{M2}, the given
uniform ANR is uniformly $\eps$-homotopy equivalent to its completion $X$,
which is still a uniform ANR, for each $\eps>0$.
By Theorem \ref{inverse limit}, $X$ is the limit of a convergent inverse sequence
of geometric realizations $P_i$ of simplicial complexes, and uniformly continuous
bonding maps $p_i$.
By \cite[Corollary \ref{metr:telescope retraction}]{M2} there exists a $k$
and a uniformly continuous retraction $r_{[k,\infty]}\:P_{[k,\infty]}\to X$.
For each $l\ge k$ let $r_l$ and $r_{[l,\infty]}$ denote the restrictions
of $r_{[k,\infty]}$ over $P_l$ and over $P_{[l,\infty]}$, respectively.
Let $p^\infty_{[k,\infty]}\:X\x I\to P_{[k,\infty]}$ be obtained by combining
the maps $p^\infty_i\:X\to P_i$.
Then for each $l\ge k$, the composition
$r_{[l,\infty]}p^\infty_{[l,\infty]}\:X\x I\to X$ is a uniformly continuous
homotopy between $X\xr{p^\infty_l}P_k\xr{r_l}X$ and $\id_X$.
Moreover, for each $\eps>0$ there exists a $k$ such that
$r_{[l,\infty]}p^\infty_{[l,\infty]}$ is an $\eps$-homotopy.
Thus $X$ is uniformly $\eps$-homotopy dominated by $P_l$.
\end{proof}

\begin{theorem}\label{domination-general} Every separable ANRU $X$ is uniformly
$C$-homotopy dominated by the geometric realization of a simplicial complex,
for each uniform cover $C$ of $X$.
\end{theorem}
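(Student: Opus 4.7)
The strategy is to follow the proof of Theorem \ref{domination} above, but to bypass its first step (replacing the ANR by its completion), since an ANRU is complete by definition. The cover-based domination will be constructed by hand, using the canonical nerve map as the up arrow and a Hahn-type approximation of a simplex-wise defined discontinuous map as the down arrow.

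Given a uniform cover $C$ of $X$, since $X$ is separable, every uniform cover of $X$ is refined by a countable point-finite uniform cover (Isbell's finiteness theorem for separable uniform spaces); after further star-refinement, I may replace $C$ by a countable point-finite uniform cover $D$ of $X$ that star-refines $C$. Let $K \bydef N(D)$; this is a simplicial complex by point-finiteness of $D$. Lemma \ref{nerve map}(a) supplies a uniformly continuous $u \bydef \phi_D\: X \to |K|^\bullet$ such that $u^{-1}$ of the open star of the vertex $[V]$ of $K$ is precisely $V$; in particular, $u^{-1}$ of the open star cover of $|K|^\bullet$ is $D$ itself.

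To construct $d\: |K|^\bullet \to X$, pick a point $x_V \in V$ for each $V \in D$. For each simplex $\sigma = \{V_0, \dots, V_n\}$ of $K$ the $V_i$ have nonempty common intersection, so $\bigcup \sigma$ lies in a single element of $C$ by the star-refinement hypothesis, and consequently $x_{V_0}, \dots, x_{V_n}$ all lie together in one element of $C$. The discontinuous assignment $\tilde d\: |K|^\bullet \to X$ sending each open simplex of $K$ to one of its $x_{V_i}$'s is therefore ``uniformly continuous at cover-scale $C$'', in the sense that $\tilde d^{-1}(C)$ is refined by the open star cover of $|K|^\bullet$. Using the Hahn-type approximation property of the ANRU $X$---a cover-based analogue of the Hahn property, obtainable by a skeleton-by-skeleton argument whose inductive step uses the uniform neighborhood retraction property characterizing an ANRU---approximate $\tilde d$ by a genuinely uniformly continuous $d\: |K|^\bullet \to X$ whose values on each simplex of $K$ are $C$-close to those of $\tilde d$.

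It remains to verify that $d \circ u$ is $C$-homotopic to $\id_X$. Given $x \in X$, one has $u(x) \in |\fll\Delta_D(x)\flr|^\bullet$, so $\tilde d(u(x)) = x_V$ for some $V \in \Delta_D(x)$; since $x \in V$, the points $x$ and $\tilde d(u(x))$ lie together in $V$. Thus $d \circ u$ is $C'$-close to $\id_X$ for a modest coarsening $C'$ of $C$, and a $C''$-homotopy from $d \circ u$ to $\id_X$ (for a further coarsening $C''$) follows from the uniform local contractibility of $X$. The main obstacle in this plan is the cover-based Hahn-type property for ANRUs: it does not appear in the paper in this form, but it should be derivable from the ANRU hypothesis by adapting the skeleton-by-skeleton extension argument that underlies Isbell's treatment in the residually finite-dimensional case \cite[7.3]{I2}; this is the only step that genuinely uses that $X$ is an ANRU rather than merely a separable complete uniform space.
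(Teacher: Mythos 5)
Your high-level plan (nerve map up, simplex-wise discontinuous map down, then approximate via a Hahn-type property and conclude by uniform local contractibility) does track the paper's architecture. But there is a real gap, and, separately, a place where you are under-informed.

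The gap: an ANRU is Isbell's \emph{non-metrizable} generalization of a complete uniform ANR, so $X$ need not be metrizable. You apply Lemma~\ref{nerve map} directly to $X$ to get the up-arrow $\phi_D\:X\to|N(D)|^\bullet$, but that lemma is stated and proved for metrizable uniform spaces (its proof builds the map from a metric on $X$). Likewise your appeal to ``Isbell's finiteness theorem for separable uniform spaces'' to produce a countable point-finite uniform refinement is only substantiated in the paper for separable \emph{metrizable} spaces ([M2, Theorem A.7]). The paper's proof deals with both issues simultaneously by first invoking Isbell [I3, Theorem~I.14] to factor the relevant cover through a uniformly continuous surjection $f\:X\to M$ onto a metric space $M$; separability of $X$ forces $M$ to be separable, the point-finite countable refinement and the nerve map are then constructed for $M$ rather than $X$, and the up-arrow is $g\circ f$ where $g\:M\to|N(E)|$. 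This intermediate metric quotient is exactly what your plan is missing; without it the construction of the up-arrow is not justified.

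On the Hahn-type property: you flag it as ``the main obstacle'' and as something that ``does not appear in the paper in this form,'' but in fact the paper cites precisely this cover-based Hahn property for ANRUs to Isbell [I2, 4.1]; it is a known theorem, not something you need to re-derive. Your proposed route of deriving it by a ``skeleton-by-skeleton'' extension over the simplicial complex is also suspect: that argument degrades uniformly in dimension (this is the central difficulty of the infinite-dimensional case discussed in the introduction), so it would not yield a uniform bound for an infinite-dimensional nerve. Isbell's proof of [I2, 4.1] does not rely on a skeletal induction in the domain --- the Hahn property is a statement about the target $X$ and arbitrary domains $Y$ --- so the correct move here is simply to cite it.

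The remainder of your argument (the simplex-wise definition of $\tilde d$ using the $x_V$'s and the star-refinement hypothesis, the estimate that $\tilde d^{-1}(C)$ is refined by the open-star cover, and the final use of uniform local contractibility) is sound and corresponds to the paper's construction of the discontinuous map $\phi$ and its approximation $\psi$; the only substantive divergence is that the paper routes through $M$, and this routing is essential rather than cosmetic.
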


The following proof elaborates on \cite[proof of 7.3]{I2}.

\begin{proof}
Since $X$ is an ANRU it is uniformly locally contractible, i.e.\ for every
uniform cover $C$ of $X$ there exists a uniform cover $C_1$ of $X$ such that
for every uniform space $Y$, every two $C_1$-close maps $Y\to X$ are uniformly
$C$-homotopic \cite[proof of 4.2]{I2}.
Let $C_2$ be a uniform barycentric refinement of $C_1$.
Since $X$ satisfies the Hahn property \cite[4.1]{I2}, there exists a uniform
cover $C_3$ of $X$ such that for any uniform space $Y$, any map
$\phi\:Y\to X$ such that $\phi^{-1}(C_3)$ is uniform is $C_2$-close to
a uniformly continuous map.
Let $C_4$ be a uniform star-refinement of $C_3$.

By \cite[Theorem I.14]{I3} there exists a uniformly continuous map $f$ from $X$ onto
a metric space $M$ such that $C_4$ is refined by $f^{-1}(D)$, where $D$ is
the cover of $M$ by all sets of diameter $<1$.
Since $X$ is separable and $f$ is surjective, $M$ is separable.
Then $D$ is refined by a point-finite countable uniform cover $E$ (see
\cite[Theorem \ref{metr:A.7}]{M2}).
Without loss of generality $E$ is open.
Then by Lemma \ref{nerve map} there exists a uniformly continuous map
$g\:M\to |N(E)|$ such that $E=f^{-1}(F)$, where $F$ is the cover of
$|N(E)|$ by the open stars of vertices.
Let us define a discontinuous map $\phi\:|N(E)|\to X$ by sending an $x\in|N(E)|$
to any point in $f^{-1}(g^{-1}(U))$, where $U$ is any element of $F$
containing $x$.
Note that $g^{-1}(U)$ is nonempty, being an element of $E$; hence
$f^{-1}(g^{-1}(U))$ is nonempty, due to the surjectivity of $f$.
Since $f^{-1}(g^{-1}(U))$ is contained in some $V_U\in C_4$,
we get that $\phi(F)$ refines $C_3$ and the composition
$X\xr{f}M\xr{g}|N(E)|\xr{\phi}X$ is $C_4$-close to the identity.
Then there exists a uniformly continuous map $\psi\:|N(E)|\to X$ that is
$C_2$-close to $\phi$.
Hence $\psi$ is $C_1$-close to the identity, and therefore $C$-homotopic to it.
\end{proof}

The following improves on \cite[Theorems \ref{metr:approximate polyhedron}
and \ref{metr:approximate cubohedron}]{M2}.

\begin{theorem}
A separable uniform space $X$ satisfies the Hahn property if and only if for each
uniform cover $C$ of $X$ there exists a simplicial complex $K$ and uniformly
continuous maps $X\xr{f}|K|\xr{g}X$ whose composition is $C$-close to $\id_X$.
\end{theorem}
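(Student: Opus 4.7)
The plan is to handle the two implications separately, since the content differs.

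For the ``if'' direction, I would use the factorization $X\xr{f}|K|\xr{g}X$ to transport the Hahn property from $|K|$ to $X$. Since $|K|$ is the geometric realization of a simplicial complex, it satisfies the Hahn property either directly via Theorem \ref{Hahn} (a simplicial complex, regarded as the poset of its nonempty simplices, is conditionally complete) or via Theorem \ref{simplicial ANR} combined with \cite[Theorem \ref{metr:LCU+Hahn}]{M2}. Given a discontinuous test map $h\:Y\to X$ whose preimage of a suitable uniform cover of $X$ is uniform, I would form $fh\:Y\to|K|$, which inherits analogous control from the uniform continuity of $f$; apply Hahn in $|K|$ to obtain a uniformly continuous $\phi\:Y\to|K|$ close to $fh$; and finally post-compose with $g$ to obtain a uniformly continuous map $g\phi\:Y\to X$ that is close to $gfh$ and hence, since $gf$ is close to $\id_X$ by hypothesis, close to $h$. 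A routine choice of star-refinements translates the prescribed uniform cover on $X$ into the appropriate covers at each intermediate stage.

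For the ``only if'' direction, the plan is a streamlined version of the proof of Theorem \ref{domination-general}, with the local-contractibility steps omitted since I only need closeness, not homotopy. Given a uniform cover $C$ of $X$, I would first use Hahn to obtain a uniform cover $C_1$ of $X$ such that any map $\phi\:Y\to X$ with $\phi^{-1}(C_1)$ uniform is close, within a prescribed star-refinement $C_0$ of $C$, to a uniformly continuous map; then take a strong star-refinement $C_2$ of $C_1$ also refining $C_0$. Using \cite[Theorem I.14]{I3}, I would factor $X$ through a uniformly continuous surjection $f\:X\to M$ onto a separable metric space such that $C_2$ is refined by $f^{-1}(D)$, where $D$ is the cover of $M$ by sets of diameter $<1$; refine $D$ by a point-finite countable uniform open cover $E$ of $M$ via \cite[Theorem \ref{metr:A.7}]{M2}, so that $N(E)$ is a simplicial complex; and use Lemma \ref{nerve map} to construct a uniformly continuous map $g\:M\to|N(E)|$ with $g^{-1}(F)=E$, where $F$ is the cover of $|N(E)|$ by open stars of vertices. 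I would then set $K:=N(E)$ and take $f':=gf\:X\to|K|$. For the return map, I would build a discontinuous section $\phi\:|K|\to X$ by sending each $x\in|K|$ to an arbitrary point of $f^{-1}(g^{-1}(U))$ for some $U\in F$ containing $x$ (nonempty by surjectivity of $f$); by construction $\phi(U)\subset V_U$ for some $V_U\in C_2$, whence $\phi^{-1}(C_1)$ is uniform, and Hahn then produces a uniformly continuous $g':=\psi\:|K|\to X$ that is $C_0$-close to $\phi$. A direct check shows that for any $x\in X$, if $U\in F$ contains $gf(x)$, then both $x$ and $\phi gf(x)$ lie in the common element $V_U\in C_2$; hence $\psi gf$ is $C_0$-close to $\phi gf$, which is $C_2$-close to $\id_X$, so combining through the star-refinement yields $g'f'$ $C$-close to $\id_X$.

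The one point needing care is orchestrating the covers $C_0, C_1, C_2$ so that the chain of closeness estimates telescopes into a $C$-closeness of $g'f'$ to $\id_X$; but this is pure bookkeeping. All the substantive ingredients --- the Hahn property for realizations of simplicial complexes (Theorem \ref{Hahn}), the factorization through separable metric quotients (\cite[Theorem I.14]{I3}), and the nerve-map construction (Lemma \ref{nerve map}) --- are already in hand, so no essentially new technical tool is required.
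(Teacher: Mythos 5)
Your proposal is correct and takes essentially the same approach as the paper, which simply refers the ``only if'' direction to the proof of Theorem \ref{domination-general} (with the local-contractibility steps dropped, exactly as you do) and calls the ``if'' direction easy. One small imprecision to flag: the claim $\phi(U)\subset V_U$ is not literally true, since for $y\in U$ the element $U'\ni y$ used to define $\phi(y)$ may differ from $U$; the correct statement, which your strong star-refinement setup already delivers, is that $\phi(U)\subset\st(V_U,C_2)\subset$ some element of $C_1$, and this is what is actually needed to conclude that $\phi^{-1}(C_1)$ is uniform.
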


\begin{proof}
The ``only if'' assertion can be proved similarly to Theorem \ref{domination-general}
(or, in the metrizable case, to Theorem \ref{domination}).
The ``if'' assertion is easy (cf.\ \cite{M2}).
\end{proof}

\begin{theorem}\label{Mather trick}
If $X$ is a uniform ANR, then $X\x\R$ is uniformly homotopy equivalent to
a uniform polyhedron.
\end{theorem}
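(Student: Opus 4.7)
The plan is to adapt the classical Mather ``infinite telescope'' trick to the uniform setting. Crossing with $\R$ is essential because, by Theorem \ref{domination}, a uniform ANR admits only a sequence of $\eps_n$-homotopy dominations by simplicial polyhedra with $\eps_n\to 0$, not a single domination; the extra factor $\R$ provides the length needed to splice these dominations together at shrinking scales. (Note that one cannot expect $X$ itself to be uniformly homotopy equivalent to a uniform polyhedron in general, precisely because the obvious linear contraction $X\x\R\to X$ fails to be uniformly continuous at infinity.)

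\emph{Construction of the polyhedron.} Fix $\eps_n\to 0$ as $|n|\to\infty$ (say $\eps_n=2^{-|n|}$). By Theorem \ref{domination}, choose for each $n\in\Z$ a simplicial complex $K_n$, uniformly continuous maps $f_n\:X\to|K_n|$ and $g_n\:|K_n|\to X$, and a uniformly continuous homotopy $H_n\:X\x I\to X$ from $\id_X$ to $g_nf_n$ of diameter $\le\eps_n$. Set $a_n:=f_{n+1}g_n\:|K_n|\to|K_{n+1}|$ and, via Theorem \ref{monotone approximation} together with Corollary \ref{canonical}, replace $a_n$ by an $\eps_n$-close monotone approximation $\tilde a_n\:K_n^{\#p_n}\to K_{n+1}^{\#q_n}$ on iterated canonical subdivisions (choosing $q_n=p_{n+1}$ so that consecutive subdivision levels agree). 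Build the bi-infinite polyhedral telescope
$$\tilde T=\bigcup_{n\in\Z}TMC(\tilde a_n)$$
by gluing successive thickened mapping cylinders along their common $K_{n+1}^{\#p_{n+1}}$'s. Since subdivisions of simplicial complexes are conditionally complete posets, Theorem \ref{tmc2} makes each $TMC(\tilde a_n)$ conditionally complete; the resulting glued telescope is still the geometric realization of a conditionally complete poset, hence a uniform polyhedron by Corollary \ref{CCP-ANR}.

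\emph{Uniform homotopy equivalence $\tilde T\simeq X\x\R$.} Define $\alpha\:\tilde T\to X\x\R$ on the cylinder piece $|K_n|\x[0,1]\incl TMC(\tilde a_n)$ by $(y,t)\mapsto(H_{n+1}(g_n(y),t),\,n+t)$: this matches at the $|K_{n+1}|$-interfaces because $H_{n+1}(\cdot,0)=\id_X$ and $H_{n+1}(\cdot,1)=g_{n+1}f_{n+1}$ match the bonding data $a_n$ via $g_{n+1}$. Define $\beta\:X\x\R\to\tilde T$ dually, on $X\x[n,n+1]$, by $f_n$ at $t=n$, $f_{n+1}$ at $t=n+1$, and an interpolation through the cylinder of $TMC(\tilde a_n)$ built from $H_n$ and $\tilde a_n$. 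On the $n$-th block, both compositions $\alpha\beta$ and $\beta\alpha$ are $O(\eps_n)$-close to the identity, and explicit homotopies to the identity are obtained by concatenating and reparameterizing the $H_n$'s, played both forward and backward along the $\R$-direction (so that the total deformation on a slab of length one near parameter $n$ has diameter $O(\eps_n)$).

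\emph{Main obstacle.} The most delicate issue is global uniform continuity: the moduli of continuity of the pieces comprising $\alpha$, $\beta$, and the compositional homotopies vary with $n$, and must be made compatible across all blocks simultaneously, not merely block by block. This is arranged by choosing the $\eps_n$ and the refinement scales supplied by Theorem \ref{domination} to decay fast enough relative to the continuity moduli of the ambient $H_n$ and $\tilde a_n$, so that gluing errors at block boundaries are absorbed in the next block. Residual small mismatches produced by the monotone approximation step are smoothed away using uniform local contractibility (Theorem \ref{LCU}), and any incompleteness introduced by the infinite telescope is handled by homotopy completeness (Lemma \ref{3.9}). Once these technicalities are in place, $\tilde T$ supplies the required uniform polyhedron.
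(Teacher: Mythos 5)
Your construction diverges fundamentally from the paper's, and the divergence is driven by a misreading of what Mather's trick requires. The paper applies Theorem~\ref{domination} \emph{once}: it picks a single simplicial complex $K$ and maps $d\:|K|\to X$, $u\:X\to|K|$ with $du$ uniformly homotopic to $\id_X$ (an $\eps$-homotopy domination for any one fixed $\eps$ \emph{is} a uniform homotopy domination). Then the classical telescope manipulation $X\x\R \simeq \Tel(\dots\xr{du}X\xr{du}\dots)\simeq\Tel(\dots\xr{ud}|K|\xr{ud}\dots)$ produces a periodic bi-infinite telescope over the one complex $K$; the factor $\R$ arises because the telescope of a self-map records a domination rather than an equivalence, not because one must splice together dominations at shrinking scales. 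After that, Theorem~\ref{monotone approximation} replaces $ud$ by a monotone map $f\:K^{\#m}\to K^{\#n}$ and $\id$ by $\#(m-n)$, and Theorem~\ref{tmc2} together with \cite[Lemma~\ref{comb:CCP amalgam}]{M1} turns the periodic zigzag telescope into the geometric realization of a single conditionally complete poset. Your opening claim that ``a uniform ANR admits only a sequence of $\eps_n$-homotopy dominations, not a single domination'' is therefore false, and it sends you down an unnecessarily hard road.

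The road you take has genuine gaps that your ``main obstacle'' paragraph names but does not close. First, your interface condition fails on the nose: at $t=1$ you have $\bigl(g_{n+1}f_{n+1}g_n(y),\,n+1\bigr)$ while the start of the next block gives $\bigl(g_{n+1}\tilde a_n(y),\,n+1\bigr)$, and $\tilde a_n$ is only an $\eps_n$-approximation of $f_{n+1}g_n$ — so $\alpha$ is not even well defined on $\tilde T$ as written, and ``smoothed away by Theorem~\ref{LCU}'' is not a construction. Second, you assert without proof that the countable amalgamation $\bigcup_n TMC(\tilde a_n)$ of conditionally complete posets along shared subposets is again conditionally complete; this is exactly the point for which the paper invokes \cite[Lemma~\ref{comb:CCP amalgam}]{M1}, and your non-periodic amalgamation does not reduce to that case without an argument. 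Third, and most seriously, you need the maps $\alpha$, $\beta$, and the homotopies to be \emph{uniformly} continuous on all of $\tilde T$ resp.\ $X\x\R$, with a single modulus of continuity valid across all $n\in\Z$; since your blocks use different $K_n$, different $g_n$, $f_n$, $H_n$, and different iterated subdivision depths $p_n$, the block-wise Lipschitz/uniformity constants can and generically will degenerate as $|n|\to\infty$, and nothing in your sketch controls this. The paper's use of a single $K$ and a single homotopy $h$ makes every block literally the same, so the modulus of continuity is automatically uniform in $n$ — this is precisely why the simpler route works and the harder one is not merely harder but genuinely incomplete as stated.
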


\begin{proof}
By Theorem \ref{domination} we are given uniformly continuous maps $d\:|K|\to X$ and
$u\:X\to |K|$, where $K$ is a simplicial complex, such that the composition
$X\xr{u}|K|\xr{d}X$ uniformly homotopic to the identity by an homotopy
$h\:X\x I\to X$.
We now perform a uniform version of Mather's trick (see \cite{FR}): $X\x\R$ is
uniformly homotopy equivalent to the double mapping telescope of
$$\dots\to X\xr{du}X\xr{du}X\to\dots,$$
which is in turn uniformly homotopy equivalent to the double mapping telescope of
$$\dots\to X\xr{d}|K|\xr{u}X\xr{d}|K|\xr{u}X\to\dots,$$
which is in turn uniformly homotopy equivalent to the double mapping telescope of
$$\dots\to |K|\xr{ud}|K|\xr{ud}|K|\to\dots.$$
Since $|K|$ is a uniform ANR, by \cite[Lemma \ref{metr:LCU}]{M2} it is
uniformly locally contractible.
Then by Theorem \ref{monotone approximation}, $ud$ is uniformly homotopic to
the composition $|K|\xr{h_m^{-1}}|K^{\#m}|\xr{|f|}|K^{\#n}|\xr{h_n}|K|$ for some
monotone map $f\:K^{\#m}\to K^{\#n}$, where $m\ge n$ for the sake of definiteness.
On the other hand, we have the monotone map $\#(m-n)\:K^{\#m}\to K^{\#n}$, whose
geometric realization is uniformly homotopic to the uniform homeomorphism
$h_m\:|K^{\#m}|\to|K^{\#n}|$ (see Lemma \ref{canonical-MC}).
Thus $X\x\R$ is uniformly homotopy equivalent to the geometric realization of
the double mapping telescope of
$$\dots\xl{\#(m-n)}K^{\#n}\xr{f}K^{\#m}\xl{\#(m-n)}K^{\#n}\xr{f}\dots.$$
By Theorem \ref{tmc2}, the latter is uniformly homotopy equivalent to
the geometric realization of the thickened double mapping telescope, which
is a uniform polyhedron (using \cite[Lemma \ref{comb:CCP amalgam}]{M1}).
\end{proof}

\begin{remark}
Similar arguments (with double mapping telescopes not of individual nerves
but of their mapping telescopes) also show that if $X$ is a complete uniform ANR,
then $X\x\R$ is the limit of a convergent inverse sequence
$\dots\xr{q_1}Q_1\xr{q_0}Q_0$ of geometric realizations of countable preposets
and uniformly continuous maps such that each $q_i$ is a uniform homotopy equivalence.
\end{remark}

\begin{theorem}\label{ANR-limit} Every complete uniform ANR is the limit of
a convergent inverse sequence $\dots\xr{q_1}Q_1\xr{q_0}Q_0$ of uniform polyhedra
and uniformly continuous maps such that each $q_i$ is (non-uniformly) a homotopy
equivalence.
\end{theorem}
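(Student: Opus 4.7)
The plan is to adapt the proof of Theorem \ref{Mather trick} together with the strategy outlined in the Remark just preceding this theorem, with two modifications: replacing double-sided mapping telescopes by one-sided ones (so as to obtain $X$ in the limit rather than $X\x\R$), and arranging the resulting polyhedra as conditionally complete posets rather than mere preposet realizations (so as to obtain uniform polyhedra). The trade-off is that the bonding maps need only be non-uniform homotopy equivalences, rather than uniform ones.

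First I would choose a sequence $\eps_n\to 0$ and, by Theorem \ref{domination}, obtain simplicial complexes $K_n$ and uniformly continuous maps $d_n\:|K_n|\to X$, $u_n\:X\to |K_n|$ with $d_nu_n$ uniformly $\eps_n$-homotopic to $\id_X$. These can be chosen so that the cross-compositions $v_n:=u_nd_{n+1}\:|K_{n+1}|\to|K_n|$ are uniformly continuous, providing an auxiliary inverse sequence of simplicial complexes. For each $n$, the one-sided mapping telescope $T_n$ of $|K_n|\xr{u_nd_n}|K_n|\xr{u_nd_n}\cdots$ is uniformly homotopy equivalent to $X\x[0,\infty)$---hence to $X$---by the one-sided analog of the Mather-trick argument in the proof of Theorem \ref{Mather trick}.

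To realize $T_n$ as a uniform polyhedron $Q_n$, I would apply Theorem \ref{monotone approximation} to replace $u_nd_n$ up to uniform homotopy by a composition of subdivision maps and a monotone map $f_n\:K_n^{\#m}\to K_n^{\#n'}$, and then realize the one-sided mapping telescope of the alternating sequence $K_n^{\#n'}\xr{f_n}K_n^{\#m}\xl{\#(m-n')}K_n^{\#n'}\xr{f_n}\cdots$ as a conditionally complete poset, using Theorem \ref{tmc2} and \cite[Lemma \ref{comb:CCP amalgam}]{M1} iteratively. The bonding maps $q_n\:Q_{n+1}\to Q_n$ are induced by the cross-compositions $v_n$. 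Under the uniform homotopy equivalences $Q_n\simeq X$ and $Q_{n+1}\simeq X$, each $q_n$ corresponds to a self-map of $X$ homotopic to $\id_X$, so $q_n$ is a non-uniform homotopy equivalence by the two-out-of-three property.

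The convergence of $\{Q_n,q_n\}$ and the identification of its limit with $X$ would follow from the condition $\eps_n\to 0$ together with mesh estimates analogous to those in Lemma \ref{6.1}. The main obstacle is the polyhedral realization step: arranging the iterated thickened mapping cylinders so that the one-sided telescope becomes the realization of a conditionally complete poset, while simultaneously keeping the bonding maps uniformly continuous and the uniform homotopy equivalences $Q_n\simeq X$ valid. This is essentially the same delicate combinatorial amalgamation that appears in the final paragraph of the proof of Theorem \ref{Mather trick}, extended across the levels of an inverse sequence as hinted in the Remark.
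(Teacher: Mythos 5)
Your overall strategy---telescope each $K_n$ to get a uniform polyhedron $Q_n$ that is (non-uniformly) homotopy equivalent to $X$, then string these into an inverse sequence---is in the spirit of what the paper does, but the paper's proof proceeds from a different starting point and, I think crucially, in a way that sidesteps the two real difficulties your plan leaves unresolved.

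The paper begins with Theorem \ref{inverse limit}, which already supplies a \emph{convergent} inverse sequence $\dots\xr{p_1}P_1\xr{p_0}P_0$ of uniform polyhedra with $\invlim P_i\cong X$. It then inductively replaces a subsequence $P_{n_0},P_{n_1},\dots$ by double mapping telescopes $Q_k\supset P_{n_k}$: at each step, the Hahn property and uniform local contractibility furnish a retraction $r\:P_m\to X$ and an $l\ge m$ for which the self-map $f\:=(p^\infty_l)\circ r\circ p^l_m\:P_l\to P_l$ composed with $p^l_{n_k}$ is uniformly homotopic to $p^l_{n_k}$; the Mather-trick telescope built from $f$ is $Q_{k+1}$, and the extension of $p^l_{n_k}$ over $Q_{k+1}$ via that homotopy is $q_{k+1}$. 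Because each $q_{k+1}$ lands in $P_{n_k}\subset Q_k$ and restricts on $P_{n_{k+1}}=P_l$ to the original bonding map $p^{n_{k+1}}_{n_k}$, every thread of $\{Q_k,q_k\}$ already lies in $\{P_{n_k}\}$, so the limit is automatically $X$ with no new convergence analysis needed.

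Your plan has two genuine gaps that this arrangement is designed to avoid. First, the bonding maps: you propose that the cross-compositions $v_n=u_nd_{n+1}$ "induce" maps between the one-sided telescopes of $u_nd_n$ and $u_{n+1}d_{n+1}$. But a map of telescopes requires a commuting ladder, i.e.\ (at least up to controlled homotopy) $v_n\circ(u_{n+1}d_{n+1})\simeq(u_nd_n)\circ v_n$. Nothing in Theorem \ref{domination} gives you such compatibility, and there is no reason for it to hold with the $K_n$, $u_n$, $d_n$ chosen independently at each scale $\eps_n$. Second, convergence and the identification of $\invlim Q_n$ with $X$: the telescopes have unbounded diameter (they fiber over a ray), so there is no natural "mesh" that shrinks, and the estimates in Lemma \ref{6.1} concern nerves of uniform covers and canonical bonding maps $\phi^C_D$, not telescopes. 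An appeal to "mesh estimates analogous to those in Lemma \ref{6.1}" does not obviously go through; you would at minimum need to exhibit an explicit uniform homeomorphism onto the limit, which the paper gets for free by factoring the $q_k$ through the already-convergent $\{P_{n_k}\}$. Finally, a smaller point: the theorem asserts only that each $q_i$ is a non-uniform homotopy equivalence, and indeed the paper's $Q_k$ are only non-uniformly equivalent to $X$; your claim that $Q_n$ is \emph{uniformly} homotopy equivalent to $X$ (via $X\x[0,\infty)\simeq X$) overstates what the telescope construction gives, since the deformation retraction of a ray onto a point is not uniform.
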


\begin{proof}
Let $X$ be the given complete uniform ANR.
By Theorem \ref{inverse limit}, $X$ is the limit of a convergent inverse
sequence $\dots\xr{p_1}P_1\xr{p_0}P_0$ of uniform polyhedra and
uniformly continuous maps.
Suppose that we have constructed a finite sequence $n_0,n_1,n_2,\dots,n_k$
and a finite chain of uniform polyhedra and uniformly continuous maps
$P_{n_k}\subset Q_k\xr{q_{n-1}}\dots\xr{q_1}
P_{n_1}\subset Q_1\xr{q_0} P_{n_0}\subset Q_0$
such that the composition $X\xr{p^\infty_{n_i}}P_{n_i}\subset Q_i$ is
a homotopy equivalence for each $i\le k$.

Since $P_{n_k}$ is uniformly locally contractible, there exists an $\eps>0$
such that every two $\eps$-close uniformly continuous maps $Y\to P_{n_k}$
are uniformly homotopic.
Let $\delta$ be such that $p^\infty_i$ sends $\delta$-close points into
$\eps/2$-close points.
Since the inverse sequence is convergent and $X$ satisfies the Hahn property
(see \cite[Lemma \ref{metr:Hahn}(a)]{M2}), there exists an $m\ge n$ and
a map $r\:P_m\to X$ such that the composition $X\xr{p^\infty_m}P_m\xr{r}X$ is
$\delta$-close to the identity.
Then the composition $X\xr{p^\infty_m}P_m\xr{r}X\xr{p^\infty_{n_k}}P_{n_k}$
is $\eps/2$-close to $p^\infty_{n_k}$.
Since the inverse sequence is convergent, there exists an $l\ge m$ such
that the composition $P^l\xr{p^l_m}P_m\xr{r}X\xr{p^\infty_{n_k}}P_{n_k}$
is $\eps$-close to $p^l_{n_k}$.
Let $d$ be the composition $P_l\xr{p^l_m}P_m\xr{r}X$, let $u=p^\infty_l$,
and let $f$ be the composition $P_l\xr{d}X\xr{u}P_l$.
Then the composition $P_l\xr{f}P_l\xr{p^l_{n_k}}P_{n_k}$ is uniformly homotopic
to $p^l_{n_k}$.
This yields a uniformly continuous map $F\:MC(f)\to P_{n_k}$ that restricts to
$p^l_{n_k}$ on each of the two copies of $P_l$ in $MC(f)$.
Let $n_{k+1}=l$.
Applying to the maps $X\xr{u}P_l\xr{d}X$ the construction in the proof of
Theorem \ref{Mather trick}, we obtain the double mapping telescope $Q_l$
homotopy equivalent to $X$, via the composition $X\xr{u}P_l\subset Q_{k+1}$.
The partial map $Q_{k+1}\supset P_l\xr{p^l_{n_k}}P_{n_k}$ now extends (using $F$)
to a total uniformly continuous map
$q_{k+1}\:Q_{k+1}\to P_{n_k}$, and we are done with the inductive step.

The assertion now follows using that inverse limit is unchanged upon passage to
an infinite subsequence.
\end{proof}

\subsection*{Acknowledgments}

I would like to thank P. M. Akhmetiev, N. Brodsky, A. N. Dranishnikov, J. Dydak,
S. Illman, E. V. Shchepin and M. Skopenkov for useful discussions.

\subsection*{Disclaimer}

I oppose all wars, including those wars that are initiated by governments at the time when 
they directly or indirectly support my research. The latter type of wars include all wars 
waged by the Russian state in the last 25 years (in Chechnya, Georgia, Syria and Ukraine) 
as well as the USA-led invasions of Afghanistan and Iraq.

\end{document}